\newcommand{\forceindent}{\leavevmode{\parindent=1em\indent}}
\newtheorem{theorem}{Theorem}[section]
\newtheorem{lemma}[theorem]{Lemma}
\newtheorem{corollary}[theorem]{Corollary}
\newtheorem{proposition}[theorem]{Proposition}
\newtheorem{claim}[theorem]{Claim}
\newtheorem{assumption}[theorem]{Assumption}
\theoremstyle{definition}
\newtheorem{remark}[theorem]{Remark}
\newtheorem{definition}[theorem]{Definition}
\newtheorem{example}[theorem]{Example}
\newtheorem{note}[theorem]{Note}
\newcommand{\Q}{\mathbb{Q}}
\newcommand{\N}{\mathbb{N}}
\newcommand{\Z}{\mathbb{Z}}
\newcommand{\m}{\mathfrak{m}}
\newcommand{\n}{\mathfrak{n}}
\newcommand{\p}{\mathfrak{p}}
\newcommand{\gr}{\text{gr}}
\newcommand{\ann}{\text{Ann}}
\newcommand{\spec}{\text{Spec}}
\newcommand{\rr}{\mathbf{R}}
\newcommand{\mysetminus}{\bm{\smallsetminus}}
\DeclareMathOperator{\h}{ht}
\begin{document}

\title[Nonassociative complete  filtered rings]{The Artin--Rees lemma and size of spaces over  nonassociative complete filtered rings}
\author{Jayanta Manoharmayum }
\address{School of Mathematics and Statistics, University of
    Sheffield,
    Sheffield S3 7RH,
    U.K.}
\email{J.Manoharmayum@sheffield.ac.uk}

\date{}
\subjclass[2010]{17A01, 17A60, 17D99}

\begin{abstract} This paper studies nonassociative filtered rings using associated gradations. We show that a complete filtered ring $R$ with affine associated graded ring  generated in degree $1$ is a local ring, and prove that the Artin--Rees lemma holds for $R$. Assuming finiteness of the residue field, we derive asymptotics for  abelian groups with an operation of  $R$, and identify classes of torsion elements for spaces over a central extension of $R$.
\end{abstract}

\maketitle

\section{Introduction}\label{S:introduction}

A particularly effective technique in the study of filtered rings is to exploit the natural synergy between the filtration and its associated graded ring.  In many instances, the associated graded ring is a  simpler object to understand and more tractable; in the presence of a good filtration, we can utilise this knowledge to derive results about the original ring.  A striking demonstration of this strategy   is provided by the work of Huishi and van Oystaeyen on Zariskian filtrations in the setting of non-commutative rings. (For details, see \cite{HvOCommAlg}, \cite{HvO}; for a different application, see \cite{StaffordZhang}.). 
This paper illustrates that the  idea of linking  filtered and graded problems can be effective even in the feral world of nonassociative rings.

Before proceeding to a description of the main results, we record our conventions for rings and filtrations. First, all rings are assumed to be unital. An unqualified ring always refers to  a  nonassociative (that is, not necessarily associative) ring; by contrast, unless indicated otherwise, a commutative ring/algebra is automatically assumed to be associative. Second,
all filtrations and gradings that we consider are assumed to be indexed over the  monoid $\N_0=\{0,1,2,\dots\}$. We will use straight-forward adaptations of terminology from the associative setting in this section; formal descriptions are given in later sections.

 The results of this paper are obtained in the setting of complete filtered rings with affine associated graded rings which are generated in degree $1$. This is a broad class  of rings, encompassing Iwasawa algebras and complete local Noetherian commutative rings.  In detail, and for the remainder of this section, we assume the following.
\begin{enumerate}[wide=0pt]
 \item[]\emph{$R$ is a  ring equipped with a complete filtration $(F^*(R))$ satisfying the following properties: the quotient $F^0(R)/F^1(R)$ is a field, and the associated graded ring $\gr(R)$ is a commutative algebra over  $F^0(R)/F^1(R)$, finitely generated in degree $1$. }\end{enumerate} 
 We will then show that the following assertions hold.
\begin{enumerate}[label=(A\arabic*) , font=\bfseries,  wide=0pt, itemsep=1pt]
\item \label{A1}The ring $R$ is a local ring with maximal ideal $F^1(R)$. We set $\m:=F^1(R)$.
\item \label{A2}The equality $F^i(R)F^j(R)=F^{i+j}(R)$ holds for all $i,j\in \N_0$. In particular, the ideals $\m^2,\m^3,\m^4,\dots $ are defined unambiguously, and the filtration on $R$ is the $\m$-adic filtration.
\item \label{A3} In both the ring $R$ and the Rees ring  associated to the filtration $(F^*(R))$,  every left ideal is the linear span of  finitely many elements. Thus $(F^*(R))$ can be viewed as a  Zariskian filtration for nonassociative rings (cf. \cite[Chapter II \S 2]{HvO}). The rationale for avoiding the term \emph{Noetherian} here will become transparent in Section \ref{ss:gen}, after Definition \ref{D:fin span}.
\item \label{A4} If $I$ is a left ideal of $R$, then there exists an integer $D\in \N_0$ such that 
\[\m^{n+d}\cap I=\m^n(\m^d\cap I)\]
for all integers $n\in\N_0$ and integers $d\geq D$. The statement is a nonassociative version of the Artin--Rees lemma (cf. \cite[Theorem 8.5]{mats}).
 \end{enumerate}

Assertions \ref{A1}, \ref{A2} and \ref{A3} are covered by  Theorem \ref{T:CLF}.  Assertion \ref{A4} is covered by Theorem \ref{T:AR3} in the wider setting of \emph{$R$-spaces} (which we discuss  in Section \ref{ss:gen}). For the purposes of this introduction,  we need only keep in mind the following remarks:   an $R$-space is an abelian group on which the ring $R$ operates bilinearly and unitally; the class of $R$-spaces includes ideals, and $R$-modules when $R$ is associative. 

The hypotheses on the associated graded ring allow us to define dimensions of filtered $R$-spaces independently of the filtration (see Proposition \ref{P:kdim}). When the residue field $R/\m$ is finite, we can use the associated dimension theory to obtain asymptotics for sizes of $R$-spaces. The general case is covered by Theorem \ref{T:asymptotic}; we give a  formulation for ideals here.
\begin{enumerate}[label=(A\arabic*) , font=\bfseries,  wide=0pt, itemsep=1pt]\setcounter{enumi}{4}
\item \label{A5} Assume that $R/\m$ is finite. Let   $q$ be the cardinality of $R/\m$, and let $I$ be a left ideal of $R$. Then 
\[\log_q\vert I/\m^nI\vert\sim \alpha n^\delta\]
for some  non-negative integer $\delta\in \N_0$ and a rational number $\alpha\in \Q$.\end{enumerate}
(Note that the abelian groups $I/\m^nI$  in assertion \ref{A5} above need not be $R$-spaces.)

As an application, we  consider torsion elements for  spaces over central extensions. 
The set up is as follows. Assume that the residue field  $R/\m$ is finite and let $R[[T]]$ be the ring of formal power series in a central indeterminate $T$.  Given an $R[[T]]$-space $M$ satisfying some conditions as an $R$-space, we identify classes of elements in $M$ which are killed by $R$. For  associative rings, the result relates torsion modules to pseudo-nullity over associated graded rings. The application is motivated by the relationship between torsion and pseudo-nullity for modules over Iwasawa algebras (see \cite{css}, \cite{hachimori}, \cite{howson}, \cite{ochi-venjakob}).

The rest of the  paper is organised as follows. In Section \ref{S:FC}, we set up the  terminology of  $R$-spaces  and finiteness conditions in the setting of $R$-spaces. Section \ref{S:FR} reviews some of the standard facts on filtered abelian groups and examines how finiteness conditions on associated gradations can be lifted to the filtered ring and filtered spaces.  Finiteness of the Rees ring and the Artin--Rees lemma in the setting of $R$-spaces are examined in Section \ref{S:AR} and Section \ref{S:DPS}, leading to the main results covering the assertions  \ref{A1} to \ref{A5} above. Finally, Section \ref{S:TPS} covers application to  torsion and pseudo-nullity of spaces  under central extensions.

\section{Finiteness conditions for spaces over a ring}\label{S:FC}

\subsection{Generalities}\label{ss:gen} 
 Let $R$ be a unital nonassociative ring.   By a (left) \emph{space over $R$}, or simply an \emph{$R$-space}, we  mean an abelian group $M$ together with a bilinear pairing of additive groups $R\times M\to M$ which is also unital in the sense that $(1,m)\to m$ for all $m\in M$. The $R$-space pairing will always be written  multipicatively:  thus $(r,m)\to rm$ for all elements $r\in R$ and $m\in M$; bilinearity then means that the equalities $(r_1+r_2)m=r_1m+r_2m$ and $r(m_1+m_2)=rm_1+rm_2$ hold for all $r,r_1,r_2\in R$ and $m,m_1,m_2\in M$. 
 
 Subspaces, quotients and morphisms of $R$-spaces are defined in the obvious way, and (can be checked to) have the expected properties. 
Two equivalent descriptions of spaces over rings are noteworthy here. First, an $R$-space $M$ is \emph{an abelian group $M$ with operators in $R$} (see \cite[Ch. I, \S 4.2]{bourbaki}) such that the induced map from $R\to \text{End}(M)$ is a morphism of abelian groups and the element $1\in R$ operates as the identity endomorphism on $M$. Second, and this follows from the universality of the tensor algebra, the category of $R$-spaces is equivalent to the category of
(left) $T(R)/I$-modules where 
 \[T(R):=\Z\,\oplus \, R\,\oplus \, (R\otimes R)\,\oplus \, (R\otimes R\otimes R)\,\oplus \, \dots\]
 is the tensor algebra of the $\Z$-module $R$, and $I$ is the ideal of $T(R)$ generated by $1_R-1_{\Z}$.

 \begin{remark}\label{R:spaces}  It would be desirable to have `modules' rather than spaces, so that  ring multiplication is involved. This can be done using the tensor algebra for rings satisfying polynomial identities, or from a variety (in the sense of universal algebras).  See \cite[Ch. 11, \S 1]{zsss}) for details.  Note that 
even left  ideals---which are left spaces and have ring multiplication built in---need to be appended before they can be used effectively as in the associative setting. (See \cite{lawver}).   \end{remark}

Now let $M$ be an   $R$-space.  As in usual module theory, we say that  $M$ is \emph{(left) Noetherian} if the ascending chain condition holds for $R$-subspaces of $M$:  that is, if $M_1\subseteq M_2\subseteq \dots$ is a sequence of $R$-subspaces of $M$, then we have $M_k=M_{k+1}=\dots $ for some integer $k$. The ring $R$ is left Noetherian if the ascending chain condition holds for left ideals.  

We now consider finiteness conditions on spaces over a ring. Let $M$ be an $R$-space. Recall that if $\Delta$ is a subset of  $M$, then 
the \emph{subspace generated by $\Delta$} is the smallest subspace of $M$  containing $\Delta$. A subspace of $M$ is  \emph{finitely generated} if it is generated by a finite subset of $M$. Unfortunately, finite generation of spaces is a weaker condition than finite generation of associative modules;  this difference is highlighted in the following definition.

\begin{definition}\label{D:fin span}
Let $M$ be an $R$-space.
\begin{enumerate}[(1)]
\item \label{dfs1} Let  $A$ and $\Delta$ be   subsets of $R$ and $M$, respectively. The \emph{product }  $A\Delta$ is  the abelian subgroup of $M$ consisting of all finite sums  
$\sum ax$ with $a\in A$ and $x\in \Delta$. The product $R\Delta$ is the 
 the \emph{$R$-span} of $\Delta$.
 \item \label{dfs2}We say that $M$ is  \emph{finitely spanned} if it is the $R$-span of a finite set: that is, we can find finitely many elements $x_1,\dots , x_n$ in $M$ such that $M=Rx_1+\dots +R x_n$.
\item \label{dfs3} We say that $M$ is  \emph{strongly Noetherian} if every  $R$-subspace of $M$ is finitely spanned.
The ring $R$ is strongly (left) Noetherian if every (left) ideal of $R$ is finitely spanned.
\end{enumerate}
\end{definition}

Note that if $M$ is an $R$-space, then the $R$-subspace of $M$ generated by a subset $\Delta\subseteq M$  is $R\Delta+R(R\Delta)+R(R(R\Delta))+\dots$. Also, as the ring $R$ is unital, we have
\[ R\Delta+R(R\Delta)+R(R(R\Delta))+\dots=\bigcup R(R(\dots (R\Delta)\dots)).\] This  can well fail to be finitely spanned (e.g., consider free nonassociative rings).

\begin{remark}\label{R:noeth}  It is clear that  a strongly Noetherian ring/space is Noetherian. An  associative ring is strongly Noetherian if and only if it is Noetherian.   More generally, suppose the $R$-space $M$ has the following property: for all elements $m\in M$, the $R$-span $Rm$ is a subspace of $M$. Then $M$ is strongly Noetherian if and only it is Noetherian.\end{remark}

\begin{proposition}\label{P:noeth} Let $R$ be a ring and let $M$ be an $R$-space. Then the following statements hold.
\begin{enumerate}[(1)]  
\item \label{noeth1}The space $M$ is Noetherian if, and only if, every $R$-subspace of $M$ is finitely generated.
\item \label{noeth2} Let $L\subseteq M$ be an $R$-subspace. Then $M$ is Noetherian (resp. strongly Noetherian) if and only if both $L$ and $M/L$ are Noetherian (resp. strongly Noetherian).
\end{enumerate}
\end{proposition}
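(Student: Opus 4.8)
The plan is to adapt the standard module-theoretic arguments, being careful that for nonassociative $R$-spaces "subspace generated by" is a genuine infinite iterate $R\Delta + R(R\Delta)+\cdots$ while "finitely spanned" involves only a single application of $R$. I would treat part \eqref{noeth1} first. For the forward direction, given a non-finitely-generated $R$-subspace $N\subseteq M$, build an ascending chain $N_1\subsetneq N_2\subsetneq\cdots$ of finitely generated subspaces inside $N$ by repeatedly adjoining an element of $N$ not yet in the current subspace (such an element exists at each stage precisely because $N$ is not finitely generated), contradicting the ascending chain condition. For the converse, given an ascending chain $M_1\subseteq M_2\subseteq\cdots$, note $\bigcup_i M_i$ is an $R$-subspace (the union is directed, so it is closed under addition and under the $R$-action, and contains $0$ and is closed under negation); if it is finitely generated, all its generators lie in some $M_k$, whence $M_k=M_{k+1}=\cdots$.

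For part \eqref{noeth2}, I would do the Noetherian case and the strongly Noetherian case in parallel, since the bookkeeping is the same. The implication "$M$ Noetherian $\Rightarrow$ $L$ and $M/L$ Noetherian" is immediate: subspaces of $L$ are subspaces of $M$, and subspaces of $M/L$ pull back to a subchain of subspaces of $M$ (stabilization upstairs forces stabilization downstairs). For the converse, given an ascending chain $(M_i)$ in $M$, consider the two induced chains $(M_i\cap L)$ in $L$ and $((M_i+L)/L)$ in $M/L$; by hypothesis both stabilize at some common index $k$, and then the routine diagram-chase (if $m\in M_{k+1}$, write its image in $(M_{k+1}+L)/L = (M_k+L)/L$, correct by an element of $M_{k+1}\cap L = M_k\cap L\subseteq M_k$) gives $M_{k+1}=M_k$. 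For the strongly Noetherian case I would instead argue directly with finitely spanned subspaces: given an $R$-subspace $N\subseteq M$, pick a finite spanning set of $N\cap L$ over $R$ and finitely many elements of $N$ whose images $R$-span $(N+L)/L$; I claim the $R$-span of their union is all of $N$. Here one must check that if $n\in N$ and $\bar n$ lies in the $R$-span of the chosen images in $M/L$, then $n$ minus a suitable $R$-combination of the chosen lifts lies in $N\cap L$ — this uses only that the quotient map is $R$-linear and that $N\cap L$ is itself a subspace, so it is closed under the subtraction.

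The main obstacle, and the only place any genuine care is needed, is the strongly Noetherian half of \eqref{noeth2}: "finitely spanned" is not closed under the operations that build generated subspaces, so one cannot simply quote the $T(R)$-module equivalence from Section \ref{ss:gen} (that equivalence matches $R$-subspaces with $T(R)/I$-submodules, hence matches the \emph{Noetherian} notions, but "finitely spanned over $R$" is strictly stronger than "finitely generated over $T(R)/I$"). The fix is exactly the direct computation indicated above: show that an $R$-span in $M$ can be assembled from an $R$-span of the intersection with $L$ together with $R$-lifts of an $R$-span downstairs, using nothing beyond $R$-linearity of the quotient map and the subspace property of $N\cap L$; no iteration of the $R$-action is needed because each ingredient is already a single $R$-span. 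The Noetherian statements, by contrast, follow formally from \eqref{noeth1} together with the $T(R)/I$-module equivalence, or from the chain arguments sketched above.
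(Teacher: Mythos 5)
Your proposal is correct and follows essentially the same route as the paper: the paper declares the proposition standard, writes out only the case ``$L$ and $M/L$ strongly Noetherian $\Rightarrow M$ strongly Noetherian,'' and does so by exactly your argument (span $N\cap L$, lift a spanning set of $N/(N\cap L)$, and check that one application of $R$ suffices). Your additional remark that the $T(R)/I$-module equivalence handles the Noetherian statements but not the finitely-spanned ones is accurate and consistent with why the paper singles out that case.
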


\begin{proof} Proposition \ref{P:noeth} is proved in exactly the same way as in the setting of associative rings and modules, and so we omit the proof. To illustrate one case, let us show that if 
 $L$ and $M/L$ are both strongly  Noetherian then $M$ must be strongly Noetherian. 
 
 So let $N\subseteq M$ be an $R$-subspace.  Now,  both $L\cap N$ (being a subspace of $L$) and $N/(L\cap N)$ (viewed as a subspace of $M/L$) are finitely spanned. If we now take
 \[x_1,\dots x_n,y_1,\dots  y_m\in N\] such that $x_1,\dots x_n$ span $(L\cap N)$ and the images of $y_1,\dots , y_m$ span $N/(L\cap N)$, then $N$ is spanned by $x_1,\dots x_n,y_1,\dots , y_n$.\end{proof}

\begin{remark}
Let $R$ be  a strongly Noetherian  ring.   Then, by Proposition \ref{P:noeth}\ref{noeth2}, we obtain a good supply of strongly Noetherian $R$-spaces by taking  subquotients of finite free $R$-spaces and their extension classes. Note that the construction produces spaces with additional structure. In particular, if $R$ is associative, then we obtain precisely the class of  finitely generated $R$-modules from the construction. \end{remark}

\subsection{Hilbert's basis theorem}\label{ss:prhm} 
We now consider persistence of finiteness conditions under  central extensions. 
If  $R$ is a ring, then  $R[X]$ denotes the polynomial ring in a  central indeterminate $X$; if $M$ is an $R$-space, then  we write $M[X]:=M\oplus MX \oplus M X^2\oplus \dots$ for the $R[X]$-space of polynomials in $X$ with coefficients from $M$.

\begin{proposition}[Hilbert's basis theorem]\label{P:hbt} Let $R$ be a ring and let $M$ be an $R$-space. Then the following statements hold.
\begin{enumerate}[(1)]
\item \label{hbt1} If $M$ is Noetherian, then $M[X]$ is a Noetherian  $R[X]$-module. 
\item  \label{hbt2} If $M$ is strongly Noetherian, then $M[X]$ is a strongly Noetherian  $R[X]$-module. 
\end{enumerate}\end{proposition}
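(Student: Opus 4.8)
The plan is to adapt the classical proof of Hilbert's basis theorem, keeping track of the two features peculiar to the nonassociative setting: that the $R[X]$-subspace generated by a set is an iterated span (cf.\ the paragraph after Definition~\ref{D:fin span}) rather than a single span, and that one must argue with the right notion of subspace. (It is tempting to try to deduce both parts from the classical theorem through the equivalence of $R$-spaces with $T(R)/I$-modules, but the tensor algebra of $R[X]$ is not the polynomial ring $T(R)[X]$, so I would instead argue directly.) First I would record the reduction: by Proposition~\ref{P:noeth}\ref{noeth1} applied to the $R[X]$-space $M[X]$, part \ref{hbt1} is equivalent to the statement that every $R[X]$-subspace of $M[X]$ is finitely generated, and by the definition of strongly Noetherian (Definition~\ref{D:fin span}) part \ref{hbt2} is the statement that every such subspace is finitely spanned. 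I would also note the elementary fact, used repeatedly, that a subgroup $L\subseteq M$ with $RL\subseteq L$ is automatically an $R$-subspace, since then $R(RL)\subseteq RL\subseteq L$ and so on, so the subspace it generates is $L$ itself.

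Fix an $R[X]$-subspace $N\subseteq M[X]$. For $d\in\N_0$ let $L_d\subseteq M$ consist of $0$ together with the coefficients of $X^d$ in the elements of $N$ of degree exactly $d$. Using that the $R[X]$-action on $M[X]=\bigoplus_i M X^i$ is the direct sum of the $R$-actions on the homogeneous pieces, I would check that each $L_d$ is an $R$-subspace of $M$ — closure under addition, negation and the $R$-action being immediate once one notes that the relevant degree can only drop and that the $X^d$-coefficient then vanishes — and that multiplication by $X$, which is central, gives $L_0\subseteq L_1\subseteq\cdots$. Since $M$ is Noetherian in both cases (in case \ref{hbt2} by Remark~\ref{R:noeth}), this ascending chain stabilises, say $L_D=L_{D+1}=\cdots$. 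For each $0\le d\le D$ I would choose a finite generating set (respectively a finite spanning set) $a_{d,1},\dots,a_{d,k_d}$ of the $R$-subspace $L_d$, lift each $a_{d,j}$ to an element $f_{d,j}\in N$ of degree $d$ with $X^d$-coefficient $a_{d,j}$, and let $N_0$ be the $R[X]$-subspace of $M[X]$ generated by these finitely many elements in case \ref{hbt1}, or $N_0:=\sum_{d,j} R[X]f_{d,j}$ in case \ref{hbt2}.

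It then remains to prove $N_0=N$; the inclusion $N_0\subseteq N$ is clear, and I would establish $N\subseteq N_0$ by induction on the degree of an element $f\in N$. Writing $d=\deg f$ and $a$ for its $X^d$-coefficient, we have $a\in L_d$; for $d\le D$ this lies in the (iterated, in case \ref{hbt1}) $R$-span of $\{a_{d,j}\}_j$, and for $d>D$ in that of $\{a_{D,j}\}_j$, after observing that $X^{d-D}f_{D,j}$ belongs to $N_0$, has degree $d$, and has $X^d$-coefficient $a_{D,j}$. Lifting the chosen expression for $a$ termwise — replacing each $a_{d,j}$ (or $a_{D,j}$) by the corresponding element of $N_0$ while leaving the elements of $R$ in place — produces $h\in N_0$ with $\deg h\le d$ whose $X^d$-coefficient is $a$, because passing to the $X^d$-coefficient commutes with the $R$-action on $M[X]$; hence $f-h\in N$ has degree strictly less than $d$ and lies in $N_0$ by induction, so $f\in N_0$. (When $d=0$ the construction simply returns $h=f$.) The one step where nonassociativity genuinely intervenes is this last lifting in case \ref{hbt1}: an element of $L_d$ is a sum of iterated products $r_1(r_2(\cdots(r_k a_{d,j})\cdots))$ rather than a single linear combination, so I would need a short induction on the nesting depth to see that the $X^d$-coefficient of $r_1(r_2(\cdots(r_k f_{d,j})\cdots))$ is exactly $r_1(r_2(\cdots(r_k a_{d,j})\cdots))$ — which is precisely what compatibility of the $R$-action with the $X$-grading delivers. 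In case \ref{hbt2} these iterated products collapse to ordinary spans and the argument reduces to the classical one, with the centrality of $X$ (which makes $X^{d-D}f_{D,j}$ an unambiguous $R[X]$-multiple of $f_{D,j}$) the only nonassociative ingredient.
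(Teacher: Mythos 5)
Your proof is correct, and part (2) is essentially the paper's argument: stabilise the chain of leading-coefficient subspaces $N(0)\subseteq N(1)\subseteq\cdots$, lift finite spanning sets of the $N(d)$ for $d$ up to the stabilisation index, and remove leading terms by induction on degree. For part (1), however, you take a genuinely different route. The paper verifies the ascending chain condition directly: given a chain $N_0\subseteq N_1\subseteq\cdots$ of $R[X]$-subspaces it stabilises the doubly indexed family $N_i(j)$ and reduces to showing that $L\subseteq N$ with $L(j)=N(j)$ for all $j$ forces $L=N$ --- a step that only ever requires producing a \emph{single} element of $L$ with a prescribed leading coefficient, so the structure of iterated spans never enters. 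You instead prove that every $R[X]$-subspace of $M[X]$ is finitely generated and invoke Proposition \ref{P:noeth}\ref{noeth1}; this runs both parts on one template, but it forces you to lift an element of the iterated span $R\Delta+R(R\Delta)+\cdots$ of leading coefficients, and you correctly isolate the point that makes this work, namely that the $X^d$-coefficient map commutes with the $R$-action on $M[X]$ so that nested products $r_1(r_2(\cdots(r_k a_{d,j})\cdots))$ lift termwise to $r_1(r_2(\cdots(r_k f_{d,j})\cdots))$. Both arguments are sound; the paper's version of (1) is slightly leaner because it avoids that bookkeeping, while yours has the merit of uniformity across the two parts and of making explicit exactly where nonassociativity could have caused trouble.
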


Both parts   of Proposition \ref{P:hbt} can be verified following standard proofs in  the associative setting. However,   a significant number of checks are needed to ensure that  arguments do carry over; therefore, we have  included justifications for completeness.  For Hilbert's basis theorem in a different setting, see \cite{BR}.

We start  with the following construction. Let $N$ be an $R[X]$-subspace of $M[X]$. For each integer $j\in \N_0$, we define a subset   $N(j)\subseteq M$ by
\begin{equation}\label{eq:LT}
 N(j):=\left\lbrace\, m\in M\;\middle|\;
  \begin{tabular}{c}
   $N$ contains a polynomial of the form\\
   $mX^j+\text{lower degree terms}$
   \end{tabular}
 \, \right\rbrace. 
\end{equation}
It is then  easy to check that $N(j)$ is  an $R$-subspace of $M$, and that $N(j)\subseteq N(j+1)$ for all $j\in \N_0$.

\begin{proof}[Proof of Proposition \ref{P:hbt}\ref{hbt1}] Let  $ N_0\subseteq N_1\subseteq N_2\subseteq \dots$ be an ascending chain  
of $R[X]$-subspaces of $M[X]$. For each subspace $N_i$ in the chain,   let $N_i(j)\subseteq M$ be the $R$-subspace of leading coefficients of degree $j$ polynomials in $N_i$  (as in equation \eqref{eq:LT} above). We then have
$N_i(j)\subseteq N_{i'}(j')$ whenever $i\leq i'$ and $j\leq j'$.

Since $M$ is Noetherian, the sequence $N_0(0)\subseteq N_1(1)\subseteq N_{2}(2)\subseteq \dots $
stabilises, say from $n_1$ onwards.  That is, we have $N_j(j)=N_{n_1}(n_1)$ for all $j\geq n_1$.  Also for each non-negative integer $i<n_1$, the chain 
$ N_{i}(0)\subseteq N_{i}(1)\subseteq N_{i}(2)\subseteq \dots $
stabilises. We can therefore find an $n_2\in \N$ such $N_{i}(j)=N_{i}(n_2)$ for all $0\leq i<n_1$ and $n_2\leq j$. 
Take $n=\max\{n_1, n_2\}$. Then we have $N_{i}(j)=N_{n}(j)$ for all $i\geq n$ and $j\in \N_0$. 

So we are reduced to showing that if $L\subseteq N$ are subspaces of $M[X]$ such that $L(j)=N(j)$ for all $j\in \N_0$, then $L=N$. 

Suppose this last claim is not true. Take $\lambda\in N\mysetminus L$ of minimal degree, say $d$, and write $\lambda=aX^d+\text{lower degree terms}$. Thus $a\in N(d)$ and, as $N(d)=L(d)$, we can find an element $\widetilde{\lambda}\in L$ of degree $d$ and leading coefficient $a$. By minimality we must have $(\lambda-\widetilde{\lambda})\in L$, and so $\lambda\in L$---a contradiction. This completes the proof of  part \ref{hbt1}.\end{proof}

\begin{proof}[Proof of Proposition \ref{P:hbt}\ref{hbt2}] 
Let $N$ be a subspace of $M[X]$.   As in equation \eqref{eq:LT},  let $N(j)$ be the subspace of leading coefficients of degree $j$ polynomials in $N$.
Then $N(0)\subseteq N(1)\subseteq N(2)\subseteq \dots$ is an increasing sequence of $R$-subspaces of $M$.  Since a strongly Noetherian space is Noetherian, we must have  $N(k)=N(k+1)=\dots$ for some $k$.

Let $\Delta=\{a_1,\dots ,a_d\}\subseteq N(k)$ span the $R$-space $N(k)$.   For each index $i$ in the set $\{1,\dots ,d\}$, we choose an element $\lambda_i\in N$ as follows: if $a_i\neq 0$, we require  $\lambda_i\in N$ to have  degree $k$ and leading coefficient $a_i$; if $a_i=0$, then  $\lambda_i=0$.

Now consider an element $\lambda \in N$. Let us first suppose that $\lambda$  has degree $n$ with $n\geq k$, and set $a$ to be the leading coefficient of $\lambda$. Now $a\in N(k)$ and so $a=r_1a_1+\dots+r_da_d$ for some $r_1,\dots ,r_d\in R$. Thus 
if we take 
\[\widetilde{\lambda}:= (r_1X^{n-k})\lambda_1+\dots +(r_dX^{n-k})\lambda_d,\] then $\widetilde{\lambda}\in N$ has  degree $n$ and  leading term  $a$. The difference $\lambda-\widetilde{\lambda}$ therefore is an element of $N$ of degree strictly less than the degree of $\lambda$. We can continue this process till we arrive to an element in $N$ of degree less than $k$.

We now suppose that $\lambda\in N$ has degree $l$ with $l<k$. We can then repeat the above argument, this time lifting generators of $N(l)$. And continue, getting down to $N(0)$.

The upshot of all this is as follows. Throwing in extra elements if needed, we find a positive integer $d$ such that the following holds.
For each integer $i\in\{0,\dots , k\}$, we can choose $\lambda_{i1},\dots ,\lambda_{id}\in N$ of the form
\[ \lambda_{ij}=a_{ij}X^i+\text{lower degree terms},\quad j=1,\dots, d,\]  such that $\{\, a_{ij}\mid j=1,\dots , d\}$ spans $N(i)$. Then $N$ is spanned by the 
finite set $\{\, \lambda_{ij}\mid i=0,\dots, k, j=1,\dots ,d\,\}$.\end{proof}

\section{Filtered rings and spaces}\label{S:FR}

In this section, we set out  notation and terminology for filtrations and associated gradations, and consider finiteness conditions (from Section \ref{S:FC}) in the context of filtered spaces. In Section \ref{ss:fag}, we recall some  properties of filtered abelian groups 
 and discuss two lemmas (Lemma \ref{L:filab convergence} and Lemma \ref{L:filabmain}) that underpin many of the constructions in this paper. Specialising  to rings and spaces over rings, we show that the property of being strongly Noetherian can be deduced from properties of   associated gradations (Proposition \ref{P:fsn}) in Section \ref{ss:filring}.

\subsection{Filtered abelian groups}\label{ss:fag} As indicated in Section \ref{S:introduction}, we shall only consider decreasing filtrations  indexed over $\N_0=\{0,1,2,\dots\}$. We will denote the accompanying decreasing filtration on a filtered object by $(F^i(-))$ when no confusion can arise. All abelian groups considered are additive.

By a \emph{filtered abelian group}, we mean an abelian group $A$ together with an exhaustive decreasing filtration $(\,  F^i(A) \mid i=0,1,2,\dots )$ of subgroups: that is, we have $F^0(A)=A$,   and  the inclusion $F^i(A)\supseteq F^{i+1}(A)$ holds for all integers $i\in \N_0$.
The filtered  abelian group $A$ is said to be \emph{separated} if its filtration is separated, i.e., we have $\bigcap F^i(M)=(0)$; it is \emph{complete} if the natural map $M\to \varprojlim M/F^i(M)$ is an isomorphism.

We now record some basic properties   of filtered abelian groups. (For general references, see \cite{HvO}, \cite{NvO}.)

\begin{enumerate}[label=(FilAb\arabic*) , font=\bfseries,  wide=0pt, itemsep=5pt]

\item \label{fab:val}  Let $A$ be a filtered abelian group with filtration $(F^i(A))$. 
The filtration $(F^i(A)))$ allows us to define the \emph{associated  graded abelian group} $\gr(A)$ given by
\[\gr(A):=\frac{F^0(A)}{F^1(A)}\oplus\frac{F^1(A)}{F^2(A)}\oplus\dots .\]
The filtration also gives rise to two maps: the \emph{valuation} map $v\colon A\to \Z\cup\{\infty\}$ and the \emph{principal part} map $\sigma \colon A\to \gr(A)$. These are    defined as follows.  If $a\in A$, then:
\begin{enumerate}[(i)]
\item  \label{d:val} the valuation of $a$ is given by $v(a):=\max\{\, i\in \N_0 \mid a\in F^i(A)\,\}$;
\item \label{d:pp} the principal part $\sigma(a)$ is the image of $a$ in $\gr(A)$ under the natural map 
\[F^{v(a)}(A)\to F^{v(a)}(A)/F^{v(a)+1}(A).\]
\end{enumerate}
By convention  $F^\infty(A):=\bigcap F^i(A)$. Thus  an element $a\in A$ has infinite valuation $v(a)=\infty$ if and only if the principal part $\sigma(a)=0$.

\item \label{fab:gag} Let $A=A_0\oplus A_1\oplus \dots $ be an $\N_0$-graded abelian group. Then $A$ is naturally filtered with filtration $F^i(A)=A_i\oplus A_{i+1}\oplus \dots $,  and we have $\gr(A)\cong A$ canonically. 

\item \label{fab:subgroup} Subgroups and quotients of a filtered abelian group inherit natural filtrations. Let $A$ be a filtered abelian group with filtration $(F^i(A))$ and 
let $B$ be a subgroup of $A$. Then we have induced filtrations on  $B$ and $A/B$ given by
\[F^i(B)=B\cap F^i(A)\quad\text{and}\quad F^i(A/B)=(F^i(A)+B)/B.
\] The natural map  $F^i(B)/F^{i+1}(B)\to F^i(A)/F^{i+1}(A)$ is an injection for all $i\in\N_0$, and  $\gr(B)$ is naturally a graded subgroup of $\gr(A)$. The valuation and principal part of an element in $B$ does not depend on whether we use the filtration $(F^i(B))$ or $(F^i(A))$.

\forceindent We now consider products. Let $A_1, \dots , A_n$ are filtered abelian groups. Then the induced filtration on   $A_1\times \dots \times  A_n$ is 
given by 
\begin{equation}\label{eq:filprod}
 F^i(A_1\times \dots \times  A_n)=F^i(A_1)\times \dots \times  F^i(A_n).
 \end{equation}
 We have the following equality for the induced valuation: if $(a_1,\dots , a_n)$ is an element of $A_1\times \dots \times  A_n$, then  
\begin{equation} \label{eq:valprod}v(a_1,\dots , a_n)=\min\{v(a_1),\dots , v(a_n)\}.\end{equation}

\item \label{fab:top} The abelian group $A$ is a topological group with its associated  filtration $(F^i(A))$ forming a basis of open neighbourhoods of $0$.  If the filtration is separated,  then the topology on $A$ is the metric topology induced by the valuation map.    If $A$ is complete, then  the natural map $A\to \varprojlim A/F^i(A)$ is an isomorphism of topological groups with each factor $ A/F^i(A)$ in the product having the discrete topology,  and $A$ is a complete metric space.  

\end{enumerate}

We now state two lemmas that are key tools in the construction of elements in complete filtered abelian groups.   The first of these (Lemma \ref{L:filab convergence})  shows how  to handle problems of convergence using the valuation; the proof is standard and omitted (see, for instance, \cite[Ch. I, \S 3.3]{HvO}).  The second (Lemma \ref{L:filabmain}) is typically used to lift structures from associated gradations.

\begin{lemma}\label{L:filab convergence} Let $A$ be a filtered abelian group,  and  let $v\colon A\to \Z\cup\{\infty\}$ be its valuation map. Then the following statements hold.
\begin{enumerate}[(1),leftmargin=*]
\item Suppose  $A$ is separated. Then a sequence $(x_n)$  in $A$ converges to $x\in A$ if and only $v(x_n-x)\to \infty$.
\item Suppose $A$ is complete. If $(x_n)$ is a sequence in $A$, then the series $x_1+x_2+\dots $ converges precisely when $v(x_n)\to \infty$ (i.e., when  $x_n\to 0$).
\end{enumerate}
\end{lemma}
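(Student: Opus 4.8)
The plan is to reduce both assertions to statements about the filtration topology recorded in \ref{fab:top}, in which the subgroups $F^i(A)$ form a basis of open neighbourhoods of $0$, together with the elementary ultrametric inequality $v(a-b)\geq\min\{v(a),v(b)\}$, which is immediate since each $F^i(A)$ is a subgroup of $A$.

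For part (1) I would simply unwind the definition of convergence in a topological group: $x_n\to x$ means that for every $i\in\N_0$ there is an $N$ with $x_n-x\in F^i(A)$ for all $n\geq N$. Since $a\in F^i(A)$ is equivalent to $v(a)\geq i$ by the definition of the valuation in \ref{fab:val}\ref{d:val}, this condition is exactly $v(x_n-x)\to\infty$. The hypothesis that $A$ is separated enters only to guarantee that the topology is Hausdorff---equivalently, that $v$ induces a genuine metric---so that the limit is unique and the phrase ``converges to $x$'' is unambiguous.

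For part (2), write $s_n:=x_1+\dots+x_n$ for the partial sums, so that ``the series converges'' means ``$(s_n)$ converges''. For the easy forward direction, if $s_n\to s$ then $x_n=(s_n-s)-(s_{n-1}-s)$, so the ultrametric inequality together with part (1) gives $v(x_n)\geq\min\{v(s_n-s),v(s_{n-1}-s)\}\to\infty$. For the converse, assume $v(x_n)\to\infty$; for $m>n$ we have $s_m-s_n=x_{n+1}+\dots+x_m$, whence $v(s_m-s_n)\geq\min_{k>n}v(x_k)$, and the right-hand side tends to $\infty$ as $n\to\infty$. Thus $(s_n)$ is Cauchy for the valuation metric, and since $A$ is complete it is a complete metric space by \ref{fab:top}, so $(s_n)$ converges.

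The argument is routine, and the only step that genuinely needs attention is the last one: what is used there is \emph{metric} completeness (Cauchy sequences converge), so one must invoke the identification in \ref{fab:top} of this with the completeness hypothesis that $A\to\varprojlim A/F^i(A)$ is an isomorphism. If one prefers to avoid metric completeness altogether, the limit in part (2) can be built by hand: in each quotient $A/F^i(A)$ the images of the $x_n$ vanish once $v(x_n)\geq i$, so the partial sums stabilise modulo $F^i(A)$; the resulting compatible family defines an element of $\varprojlim A/F^i(A)$, and its preimage under the completeness isomorphism is the sum of the series.
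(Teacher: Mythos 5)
Your argument is correct and is exactly the standard one: the paper omits the proof of Lemma \ref{L:filab convergence}, citing it as routine (referring to \cite{HvO}), and what you have written is precisely that routine argument, including the two points that genuinely deserve the care you give them — that separatedness is needed only for uniqueness of the limit in (1), and that in (2) one must pass from the inverse-limit formulation of completeness to metric completeness (or, as in your alternative, build the limit directly as a compatible family in $\varprojlim A/F^i(A)$). Nothing to add.
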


\begin{lemma}\label{L:filabmain} Let $A$ and $B$ be filtered abelian groups, and let $\phi : A\to B$ be a continuous map. Assume the following two hypotheses.
\begin{enumerate}[(i)]
\item \label{fag1} $A$ is complete and $B$ is separated. 
\item \label{fag2}There exists a constant $K\in \Z$ with the following property: given an element $b\in B$, we can find an element $a\in A$ such that
\[ v(b)< v(b-\phi(a))\quad \text{and}\quad v(b)\leq v(a)+ K.\]
\end{enumerate}
Then $\phi : A\to B$ is surjective.
\end{lemma}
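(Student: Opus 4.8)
The plan is to construct a preimage of a given $b \in B$ by an iterative approximation argument, building a convergent series in $A$ whose image hits $b$ exactly. This is the standard "successive approximation" technique for complete filtered objects, and hypothesis \ref{fag2} is precisely what is needed to make the iteration improve the valuation at each stage.

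First I would fix $b \in B$. If $v(b) = \infty$, then $b = 0$ (since $B$ is separated) and $b = \phi(0)$, so assume $v(b) < \infty$. Applying \ref{fag2} to $b$ gives $a_1 \in A$ with $v(b - \phi(a_1)) > v(b)$ and $v(a_1) \geq v(b) - K$. Set $b_1 := b - \phi(a_1)$; if $b_1 = 0$ we are essentially done, otherwise $v(b_1) > v(b)$ is finite and we repeat, obtaining $a_2 \in A$ with $v(b_1 - \phi(a_2)) > v(b_1)$ and $v(a_2) \geq v(b_1) - K$. Inductively this produces sequences $(a_n)$ in $A$ and $(b_n)$ in $B$ with $b_0 := b$, $b_n = b_{n-1} - \phi(a_n)$, and the strictly increasing chain $v(b) = v(b_0) < v(b_1) < v(b_2) < \cdots$, hence $v(b_n) \to \infty$ and also $v(b_{n-1}) \geq v(b) + (n-1)$, so that $v(a_n) \geq v(b) + (n-1) - K \to \infty$.

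Next I would assemble the answer. Since $v(a_n) \to \infty$, Lemma \ref{L:filab convergence}(2) (using completeness of $A$) guarantees that the series $a := a_1 + a_2 + a_3 + \cdots$ converges in $A$. Telescoping gives $b - \phi(a_1 + \cdots + a_n) = b_n$ for each $n$, and since $v(b_n) \to \infty$ with $B$ separated, the partial sums $\phi(a_1 + \cdots + a_n)$ converge to $b$ in $B$ by Lemma \ref{L:filab convergence}(1). On the other hand, $a_1 + \cdots + a_n \to a$ in $A$, and $\phi$ is continuous, so $\phi(a_1 + \cdots + a_n) \to \phi(a)$. By uniqueness of limits in the separated group $B$ we conclude $\phi(a) = b$, proving surjectivity.

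The only genuinely delicate point is bookkeeping the two inequalities in \ref{fag2} simultaneously: one must track that the valuations $v(b_n)$ strictly increase (to control convergence of the $b_n$ to $0$) while also deducing from the companion bound $v(b_n) \leq v(a_{n+1}) + K$ that the $v(a_n)$ tend to infinity (to control convergence of the series $\sum a_n$). Everything else is a routine application of the two parts of Lemma \ref{L:filab convergence} together with continuity of $\phi$; no further structure on $A$ or $B$ beyond what is assumed is needed.
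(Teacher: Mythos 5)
Your proof is correct and follows essentially the same successive-approximation argument as the paper: iterate hypothesis (ii) to build sequences $(a_n)$ and $(b_n)$ with $v(b_n)$ strictly increasing and $v(a_n)\to\infty$, sum the $a_n$ using completeness of $A$, and identify the limit via separatedness of $B$ and continuity (and implicit additivity) of $\phi$. No substantive differences.
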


\begin{proof} Let $b\in B$. We construct  sequences $a_1,a_2,\dots $ (in $A$) and $b_0,b_1,b_2,\dots $ (in $B$) as follows. Set $b_0:=b$. Then, using hypothesis \ref{fag2}, choose an element $a_1\in A$ such that 
\[ \text{$v(b_0)< v(b_0-\phi(a_1))$ and $v(b_0)\leq v(a_1)+ K$,}\]
and set $b_1:=b_0-\phi(a_1)$. Then iterate. 

More formally, having constructed $a_0,a_1,\dots , a_k\in A$ and $b_1,\dots ,b_k\in B$, we use hypothesis \ref{fag2} to choose an element $a_{k+1}\in A$ and set $b_{k+1}=b_k-\phi(a_{k+1})$ so that 
\[
v(b_k)<  v(b_{k+1})\quad \text{and}\quad  v(b_k)\leq v(a_{k+1})+K.\]
Since  $b_k\to 0$ and $B$ is separated, the sequence $(\phi(a_1)+\dots +\phi(a_k))$ converges to $b$ by Lemma \ref{L:filab convergence}.  Also, as $a_k\to 0$ and $A$ is complete, the series $a_1+a_2+\dots $ converges in $A$, say to $a\in A$. We must then have $\phi(a)=b$ by the continuity of $\phi$, and this completes the proof of the lemma.
\end{proof}

\subsection{Filtered rings and filtered spaces}\label{ss:filring} By a \emph{filtered ring}, we mean a ring $R$ together with a decreasing filtration $(\,  F^i(R) \mid i=0,1,2,\dots )$ 
of ideals  satisfying the following conditions.
\begin{itemize}
\item The filtration is exhaustive: that is, we have $F^0(R)=R$.
\item  The inclusions $F^i(R)F^j(R)\subseteq F^{i+j}(R)$ hold for all $i,j\geq 0$.    
\end{itemize}
If  $R$ is a filtered ring, then a \emph{filtered $R$-space} is an $R$-space $M$ together with a decreasing filtration of $R$-subspaces $(\,  F^i(M) \mid i=0,1,2,\dots )$ such that $ F^0(M)=M$ and 
 $F^i(R)F^j(M)\subseteq F^{i+j}(M)$ for all $i,j\geq 0$.

Let  $R$ is a filtered ring.  Then the graded abelian group
\[\gr (R):= \frac{F^0(R)}{F^1(R)}\,\oplus \, \frac{F^1(R)}{F^2(R)}\,\oplus \, \dots .\] is a ring. We refer to $\gr(R)$ as the \emph{associated graded ring}. Furthermore, if $M$ is a filtered $R$-space then the associated graded abelian group
\[\gr (M):= \frac{F^0(M)}{F^1(M)}\,\oplus \, \frac{F^1(M)}{F^2(M)}\,\oplus \, \dots\]
is a graded $\gr(R)$-space.  The multiplicative structures in both cases are induced by the inclusions $F^i(R)F^j(R)\subseteq F^{i+j}(R)$ and 
$F^i(R)F^j(M)\subseteq F^{i+j}(M)$.

\begin{remark} The products $F^i(R)F^j(R)$ and  $F^i(R)F^j(M)$ are  additive groups (Definition \ref{D:fin span}); they are not assumed to be ideals or subspaces.\end{remark}

Filtered rings and filtered spaces are filtered abelian groups of course,  and we shall use the terminology introduced in Section \ref{ss:fag} without explicit reference. Note that  valuation maps and principal part maps  for filtered rings and filtered spaces do not preserve neither addition nor multiplication.  We record  the following specific case for later use, and  briefly discuss two examples of filtrations and associated gradations.

\begin{lemma}\label{L:val} Let $M$ be a filtered $R$-module.  Suppose  $r\in R$ and $m\in M$ satisfies $\sigma(r)\sigma(m)\neq 0$ in $\gr(M)$. Then $v(rm)=v(r)+v(m)$ and $\sigma(rm)=\sigma(r)\sigma(m)$.
\end{lemma}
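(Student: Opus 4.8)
\textbf{Proof proposal for Lemma \ref{L:val}.}

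The plan is to unwind the definitions of the valuation and principal part maps directly. Write $i = v(r)$ and $j = v(m)$, so that $r \in F^i(R)$ and $m \in F^j(M)$, and hence $rm \in F^i(R)F^j(M) \subseteq F^{i+j}(M)$; this immediately gives $v(rm) \geq i+j$. By the definition of the principal part map \ref{fab:val}\ref{d:pp}, the image of $rm$ under the map $F^{i+j}(M) \to F^{i+j}(M)/F^{i+j+1}(M)$ is precisely the product $\sigma(r)\sigma(m)$ computed in the $(i+j)$-th graded piece of $\gr(M)$, since the multiplication $\gr(R) \times \gr(M) \to \gr(M)$ is induced by $F^i(R)F^j(M) \subseteq F^{i+j}(M)$ and $\sigma(r) \in F^i(R)/F^{i+1}(R)$, $\sigma(m) \in F^j(M)/F^{j+1}(M)$ are the classes of $r$ and $m$.

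The hypothesis $\sigma(r)\sigma(m) \neq 0$ in $\gr(M)$ then says exactly that the image of $rm$ in $F^{i+j}(M)/F^{i+j+1}(M)$ is nonzero, i.e. $rm \notin F^{i+j+1}(M)$. Combined with $rm \in F^{i+j}(M)$, this forces $v(rm) = i+j = v(r) + v(m)$ by the definition of valuation \ref{fab:val}\ref{d:val}. Since $v(rm) = i+j$, the principal part $\sigma(rm)$ is by definition the image of $rm$ in $F^{i+j}(M)/F^{i+j+1}(M)$, which we have already identified as $\sigma(r)\sigma(m)$; hence $\sigma(rm) = \sigma(r)\sigma(m)$.

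There is essentially no obstacle here: the only point requiring a moment's care is to confirm that the graded multiplication sends the pair of classes $(\sigma(r), \sigma(m))$ living in the specified homogeneous components to the class of $rm$ in the correct component — but this is precisely how the $\gr(R)$-space structure on $\gr(M)$ was defined in Section \ref{ss:filring}. One should also note in passing that the statement is consistent with the degenerate case: if $\sigma(r)\sigma(m) = 0$ the conclusion can fail (the valuation of $rm$ may jump), which is why the nonvanishing hypothesis is indispensable. The lemma as stated carries no such subtlety, so the proof is a direct translation of definitions.
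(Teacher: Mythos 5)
Your proof is correct and is essentially identical to the paper's own argument: both set $i=v(r)$, $j=v(m)$, observe that the class of $rm$ in $F^{i+j}(M)/F^{i+j+1}(M)$ is by definition $\sigma(r)\sigma(m)$, and use the nonvanishing hypothesis to conclude $rm\in F^{i+j}(M)\mysetminus F^{i+j+1}(M)$. The only point you leave implicit (and the paper states in passing) is that the hypothesis forces $\sigma(r)\neq 0$ and $\sigma(m)\neq 0$, so both valuations are finite integers.
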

 
\begin{proof} Set  $v(r)=i$ and $v(m)=j$ (both valuations are necessarily integers). Since
$\sigma(r)\sigma(m)=rm\mod{F^{i+j+1}}$ by definition,   the product $rm$ is an element of the set $F^{i+j}(M)\mysetminus F^{i+j+1}(M)$. Hence $v(rm)=i+j$ and $\sigma(rm)=\sigma(r)\sigma(m)$.\end{proof}

\begin{example}\label{E:poly} Let $R$ be a ring, let $M$ be an $R$-space, and let $X$ be a central indeterminate. Then $M[X]$ and $M[[X]]$ are filtered modules over the filtered rings $R[X]$ and $R[[X]]$, respectively. In all instances, the filtrations are induced by powers of $X$: if $A$ denotes one of $R[X]$, $R[[X]]$, $M[X]$  or $M[[X]]$, then $F^i(A)=AX^i$. The filtrations are separated in the polynomial cases (namely, $R[X]$ and $M[X]$), complete in  the formal power series cases (namely, $R[[X]]$ and $M[[X]]$), and we have canonical isomorphisms 
\[\gr(R[[X]])\cong R[X],\quad \gr(M[[X]])\cong M[X].\]
\end{example}

\begin{example}\label{E:filpoly} More generally,  let $M$ be a filtered  $R$-space and let $X$ be a central indeterminate. 
Then $M[X]$ and $M[[X]]$ are  filtered modules over the filtered rings $R[X]$ and $R[[X]]$, respectively, with canonical  filtrations  induced by the valuation (on $R$ and $M$, respectively) and the indeterminate $X$ having valuation $1$.

Explicitly, let $A$  denote the ring $R$ or the space $M$.  Then the valuation  and filtration on $A[[X]]$ are given by
 \begin{align}\label{eq:valfilpoly}
 \begin{split}
  v\big(a_0+a_1X+\dots\big)&=\min\big\{\, v(a_0), v(a_1)+1,\dots \,\big\}, \\
  F^n(A[[X]])& =\{\, f\in A[[X]]\; |\; v(f)\geq n\,\} , \end{split}\end{align}
and  $A[X]$ inherits these as a subspace of $A[[X]]$.
The corresponding associated graded objects $\gr(A[[X]])$ and $\gr(A[X])$ are bigraded over $\N_0\times \N_0$ with $(i,j)$-th homogeneous component $({F^i(A)}/{F^{i+1}(A)})X^j$, and  we have canonical identifications
\begin{equation} \label{eq:monfilpoly}
\gr(A[[X]])\cong \gr(A[X])\cong\gr(A)[X]=
\bigoplus_{(i,j)\in\N_0\times\N_0}\frac{F^i(A)}{F^{i+1}(A)}X^j .
\end{equation}
(Note that Example \ref{E:poly} is the case when $F^i(A)=(0)$ for $i\geq 1$.)
\end{example}
As indicated in the introduction, a recurring theme of this article is to illustrate how properties of filtered rings can be deduced from their associated graded rings; the following proposition is an example of this motif.

\begin{proposition}\label{P:fsn} Let $R$ be a complete filtered ring and let $M$ be a separated filtered $R$-space. If $\gr(M)$ is a strongly Noetherian $\gr(R)$-space then $M$ is a strongly Noetherian $R$-space.
\end{proposition}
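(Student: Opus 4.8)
The plan is to lift a spanning set from the associated graded space $\gr(M)$ to the filtered space $M$, using completeness of $R$ and separatedness of $M$ to ensure the lifted elements actually span. First I would let $N\subseteq M$ be an arbitrary $R$-subspace, equipped with its induced filtration $F^i(N)=N\cap F^i(M)$; then $\gr(N)$ is a graded $\gr(R)$-subspace of $\gr(M)$ (by \ref{fab:subgroup}), hence finitely spanned by hypothesis. Since $\gr(M)$ is graded and $\gr(N)$ is a graded subspace, we may choose finitely many \emph{homogeneous} generators $\bar{n}_1,\dots,\bar{n}_t\in\gr(N)$, say with $\bar{n}_k$ of degree $d_k$, so that $\gr(N)=\gr(R)\bar n_1+\dots+\gr(R)\bar n_t$. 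For each $k$, pick $n_k\in F^{d_k}(N)$ with $\sigma(n_k)=\bar n_k$. The claim is that $N=Rn_1+\dots+Rn_t$.

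The containment $\supseteq$ is immediate since each $n_k\in N$ and $N$ is a subspace. For $\subseteq$, I would apply Lemma \ref{L:filabmain} to the map
\[\phi\colon R\times\dots\times R\longrightarrow N,\qquad (r_1,\dots,r_t)\longmapsto r_1n_1+\dots+r_tn_t,\]
where the source carries the product filtration of \eqref{eq:filprod} (so it is complete, being a finite product of complete groups) and $N$ carries its induced filtration (so it is separated, as a subgroup of the separated space $M$). The map $\phi$ is continuous because $F^i(R)\times\dots\times F^i(R)$ maps into $F^{i}(N)$ after shifting: more precisely $r_k n_k\in F^{v(r_k)+d_k}(M)$, so $\phi$ sends the open neighbourhood $\prod F^i(R)$ into $F^i(M)\cap N=F^i(N)$, giving continuity. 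The substantive hypothesis to verify is \ref{fag2}: given $b\in N$ with $v(b)=e<\infty$, the principal part $\sigma(b)$ lies in the degree-$e$ part of $\gr(N)$, so $\sigma(b)=\sum_k \bar r_k\bar n_k$ for homogeneous $\bar r_k\in\gr(R)$ of degree $e-d_k$ (terms with $e-d_k<0$ being zero). Lifting each $\bar r_k$ to $r_k\in F^{e-d_k}(R)$ with $\sigma(r_k)=\bar r_k$, and setting $a=(r_1,\dots,r_t)$, Lemma \ref{L:val} (applied componentwise, using that $\sigma(r_k)\sigma(n_k)=\bar r_k\bar n_k$ are the homogeneous pieces of $\sigma(b)\neq0$, so when $\bar r_k \bar n_k \neq 0$ the product $r_kn_k$ has valuation exactly $e$ and principal part $\bar r_k\bar n_k$) gives $\sigma(\phi(a))=\sum_k\bar r_k\bar n_k=\sigma(b)$, hence $v(b-\phi(a))>v(b)$; and $v(a)=\min_k v(r_k)\geq e-\max_k d_k$, so the constant $K=\max_k d_k$ works. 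Lemma \ref{L:filabmain} then yields that $\phi$ is surjective onto $N$, i.e. $b\in Rn_1+\dots+Rn_t$.

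The main obstacle — or rather the point requiring care — is the bookkeeping around degrees and the possibility that some $\bar r_k\bar n_k$ vanishes even though the corresponding $r_kn_k$ need not: Lemma \ref{L:val} only controls $v(r_kn_k)$ when $\sigma(r_k)\sigma(n_k)\neq0$, so for the indices with $\bar r_k\bar n_k=0$ one only knows $v(r_kn_k)\geq e-d_k+d_k=e$, hence $\geq e$, which is still enough to conclude $v(\phi(a)-\sum_{k:\,\bar r_k\bar n_k\neq 0} r_k n_k)> e$ is false in general but $v(\phi(a)) \geq e$ with the degree-$e$ part of $\sigma(\phi(a))$ equal to $\sum_k \bar r_k\bar n_k=\sigma(b)$; one should phrase the verification of \ref{fag2} directly in terms of $\sigma(b-\phi(a))$ rather than trying to compute $v(\phi(a))$ exactly. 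A secondary subtlety is ensuring the finitely many homogeneous generators of $\gr(N)$ exist: this uses that a strongly Noetherian graded space has a \emph{finite} spanning set which, after decomposing each generator into its homogeneous components and discarding zero components, can be taken homogeneous — this needs only that $\gr(N)$ is a graded subspace and that $\gr(R)$ is graded, so that the span of finitely many elements is contained in the span of their (finitely many) homogeneous parts. Everything else is the routine verification that the product filtration on $R^{\,t}$ is complete and that $\phi$ is continuous, which I would state briefly and not belabour.
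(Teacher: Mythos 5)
Your argument is correct and is essentially the paper's own proof: the paper isolates your lifting step as Lemma \ref{L:lifting generators} (applying Lemma \ref{L:filabmain} to $\phi\colon R^n\to N$ after writing $\sigma(y)$ as a homogeneous $\gr(R)$-combination of the $\sigma(y_i)$, with the same ``either $r_i=0$ or $v(r_i)+v(y_i)=v(y)$'' bookkeeping and the same constant $K=\max_i v(y_i)$), and then the proposition follows exactly as you describe. Your care over homogeneous generators and over the indices with $\bar r_k\bar n_k=0$ matches the paper's treatment.
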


For an application, let $R$ be a ring and let $M$ be a strongly Noetherian $R$-space. Then $\gr(M[[X]])\cong M[X]$ is a strongly Noetherian $\gr(R[[X]])$-space by Proposition \ref{P:hbt}\ref{hbt2}. Using completeness of $R[[X]]$ and $M[[X]]$, and Proposition \ref{P:fsn}, we deduce the following corollary.

\begin{corollary}\label{C:fps} Let $R$ be a ring and let $M$ be a strongly Noetherian $R$-space. Then $M[[X]]$ is a strongly Noetherian $R[[X]]$-space.
\end{corollary}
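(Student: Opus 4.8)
The plan is to assemble the results already in place: equip $R[[X]]$ and $M[[X]]$ with the $X$-adic filtrations of Example \ref{E:poly}, recognise the associated graded objects as $R[X]$ and $M[X]$, invoke Hilbert's basis theorem to see that these are strongly Noetherian, and then pull the property back down to $M[[X]]$ via Proposition \ref{P:fsn}.

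In detail, I would first take $F^i(R[[X]])=R[[X]]X^i$ and $F^i(M[[X]])=M[[X]]X^i$. By Example \ref{E:poly}, $R[[X]]$ is then a complete filtered ring, $M[[X]]$ is a complete (hence separated) filtered $R[[X]]$-space, and there are canonical identifications $\gr(R[[X]])\cong R[X]$ and $\gr(M[[X]])\cong M[X]$, where on the right $R$ and $M$ carry the trivial filtration (that is, $F^i=(0)$ for $i\geq 1$), so that $\gr(R)=R$ and $\gr(M)=M$.

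Next, since $M$ is a strongly Noetherian $R$-space by hypothesis, Proposition \ref{P:hbt}\ref{hbt2} shows that $M[X]$ is a strongly Noetherian $R[X]$-space. Transporting this along the identifications above, $\gr(M[[X]])$ is a strongly Noetherian $\gr(R[[X]])$-space. Proposition \ref{P:fsn}, applied to the complete filtered ring $R[[X]]$ and the separated filtered $R[[X]]$-space $M[[X]]$, then yields that $M[[X]]$ is a strongly Noetherian $R[[X]]$-space, which is the assertion.

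There is no serious obstacle here; the one point to check with care is that the isomorphism $\gr(M[[X]])\cong M[X]$ of Example \ref{E:poly} is one of graded modules over the ring isomorphism $\gr(R[[X]])\cong R[X]$ --- i.e.\ that the map sending the coefficient of $X^i$ in a power series to the corresponding principal part in degree $i$ intertwines the actions of $R[[X]]$ and of $R[X]$. This is built into the way those identifications are defined, but it is precisely what makes ``strongly Noetherian'' transfer and hence what licenses the application of Proposition \ref{P:fsn}.
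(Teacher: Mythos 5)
Your proposal is correct and follows exactly the paper's argument: endow $R[[X]]$ and $M[[X]]$ with the $X$-adic filtrations of Example \ref{E:poly}, identify $\gr(M[[X]])\cong M[X]$ as a $\gr(R[[X]])\cong R[X]$-space, apply Proposition \ref{P:hbt}\ref{hbt2} to get the strong Noetherian property of $M[X]$, and conclude via Proposition \ref{P:fsn} using completeness. Nothing is missing; your closing remark about the identifications being compatible with the module structures is the same routine compatibility the paper relies on implicitly.
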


We now turn to the proof of  Proposition \ref{P:fsn}. Our  argument relies on being able to lift spanning sets from the associated graded space; to this end, we have the following lemma.

\begin{lemma}\label{L:lifting generators} Let $R$ be a complete filtered ring and let $N$ be a separated filtered $R$-space.  Suppose we are given finitely many elements $y_1,\dots , y_n\in N$ such that 
\[ \gr(M)=\gr(R)\sigma(y_1)+\dots +\gr(R)\sigma(y_n). \]Then  $N=Ry_1+\dots +Ry_n$.\end{lemma}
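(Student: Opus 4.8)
The plan is to use Lemma \ref{L:filabmain} (the lifting lemma for complete/separated filtered abelian groups), applied to a natural map out of a finite free-ish object. Consider the $R$-space $R^n = R \times \dots \times R$, which is a filtered $R$-space by \eqref{eq:filprod}, and—since $R$ is complete—is itself complete (a finite product of complete filtered abelian groups is complete, and the filtration on $R^n$ has the product form). Define $\phi \colon R^n \to N$ by $\phi(r_1,\dots,r_n) = r_1 y_1 + \dots + r_n y_n$. This is a morphism of $R$-spaces, hence in particular a homomorphism of additive groups, and it is continuous because $F^i(R) y_k \subseteq F^{i}(N)$ for each $k$ (more precisely $F^i(R)F^0(N)\subseteq F^i(N)$), so $\phi$ maps $F^i(R^n)$ into $F^i(N)$. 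The image of $\phi$ is exactly $Ry_1 + \dots + Ry_n$, so showing $\phi$ is surjective gives the lemma.

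To invoke Lemma \ref{L:filabmain} I need hypothesis \ref{fag2}: a constant $K$ such that any $b \in N$ admits $a \in R^n$ with $v(b) < v(b - \phi(a))$ and $v(b) \le v(a) + K$. This is where the hypothesis $\gr(M) = \gr(R)\sigma(y_1) + \dots + \gr(R)\sigma(y_n)$ enters—wait, here I should be careful: the statement refers to $\gr(M)$ but the space in play is $N$; I will read this as the intended hypothesis being about $\gr(N)$, i.e. $\gr(N) = \gr(R)\sigma(y_1) + \dots + \gr(R)\sigma(y_n)$ with $y_1,\dots,y_n \in N$ (this is surely a typo for $M$, given the role the lemma plays in proving Proposition \ref{P:fsn}). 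Granting that: given $b \in N$ with $v(b) = j < \infty$, its principal part $\sigma(b) \in \gr(N)$ is homogeneous of degree $j$, so it can be written as $\sum_k \bar{c}_k \sigma(y_k)$ with $\bar{c}_k \in \gr(R)$; after discarding homogeneous components of the $\bar c_k$ that do not contribute to degree $j$, I may assume each contributing $\bar c_k$ is homogeneous of degree $j - v(y_k)$. Lift each such $\bar c_k$ to $c_k \in F^{j - v(y_k)}(R)$ (and set $c_k = 0$ otherwise), and put $a = (c_1,\dots,c_n)$. Then $v(a) = \min_k v(c_k) \ge \min_k (j - v(y_k)) \ge j - \max_k v(y_k)$, so the second inequality holds with $K := \max_k v(y_k)$, a fixed integer independent of $b$. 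For the first inequality: $\phi(a) = \sum_k c_k y_k$, and modulo $F^{j+1}(N)$ this equals $\sum_k \sigma(c_k)\sigma(y_k) = \sigma(b)$ in degree $j$ (using that $\sigma(c_k)\sigma(y_k)$ has degree $j$ or the product vanishes, cf. Lemma \ref{L:val}), so $b - \phi(a) \in F^{j+1}(N)$, i.e. $v(b - \phi(a)) > v(b)$. The edge case $v(b) = \infty$ means $b = 0$ since $N$ is separated, and then $a = 0$ works vacuously.

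With both hypotheses of Lemma \ref{L:filabmain} verified—$R^n$ complete, $N$ separated, $\phi$ continuous, and the approximation property with constant $K = \max\{v(y_1),\dots,v(y_n)\}$—the lemma yields that $\phi$ is surjective, hence $N = Ry_1 + \dots + Ry_n$, as desired.

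The main obstacle I anticipate is purely bookkeeping rather than conceptual: carefully justifying that $R^n$ (with the product filtration) is complete as a filtered abelian group, and being precise about the homogeneity/degree reduction when expressing $\sigma(b)$ in terms of the $\sigma(y_k)$—in particular handling terms where $\sigma(c_k)\sigma(y_k) = 0$ in $\gr(N)$ (which is exactly where one needs the convention relating vanishing principal parts to higher valuation, and why Lemma \ref{L:val} is only a one-directional tool here). One should also double-check that bilinearity and unitality of the $R$-space structure make $\phi$ a genuine additive-group homomorphism continuous for the filtration topologies, which is immediate from $F^i(R)F^j(N) \subseteq F^{i+j}(N)$. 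None of these steps is deep, but they must be assembled in the right order so that the constant $K$ is seen to be uniform in $b$.
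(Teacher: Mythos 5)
Your proposal is correct and follows essentially the same route as the paper's own proof: define $\phi\colon R^n\to N$ by $\phi(r_1,\dots,r_n)=r_1y_1+\dots+r_ny_n$, verify it is a continuous additive homomorphism from a complete group to a separated one, express $\sigma(b)$ as a homogeneous $\gr(R)$-combination of the $\sigma(y_k)$ to obtain the approximation property with constant $K=\max_k v(y_k)$, and conclude surjectivity from Lemma \ref{L:filabmain}. (You are also right that $\gr(M)$ in the statement is a typo for $\gr(N)$.)
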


\begin{proof} Without loss of generality, we may assume that $y_1,\dots ,y_n$ are all non-zero. As we saw in \ref{fab:subgroup}, the $n$-fold product  $R^n$  is a complete filtered abelian group with its induced filtration, and  the valuation of an element $(r_1,\dots ,r_n)\in R^n$ is given by   $v(r_1,\dots ,r_n)=\min\{r_1,\dots , r_n\}$.

Now consider the map $\phi: R^n\to N$ given by 
\[\phi(r_1,\dots, r_n)= r_1y_1+\dots +r_ny_n.\] The map $\phi$ is clearly a continuous additive group homomorphism from a complete group to a separated group. The lemma is then equivalent to surjectivity of $\phi$. To this end, let us consider
 a non-zero element $y\in N$. By expressing the principal part  $\sigma(y)$ as a $\gr(R)$ linear combination of $\sigma(y_1),\dots, \sigma(y_n)$ and then separating out into homogeneous parts, we see that we can find an element $(r_1,\dots , r_{n})\in R^n$ such that
\begin{itemize}
\item $\sigma(y)=\sigma(r_{1})\sigma(y_1)+\dots +\sigma(r_{n,})\sigma(y_n)$, and
\item for each $i=1,\dots , n$, either $r_{i}=0$ or $v(r_{i})+v(y_i)= v(y)$.
\end{itemize}
Setting  $d:=\max\{v(y_1),\dots , v(y_n)\}$, we then obtain the following inequalities:
\[
 v(y) <v\left(y-\phi(r_1,\dots ,r_n)\right)\quad \text{and}\quad 
v(y)\leq  \min\{v(r_{1}),\dots ,v(r_{n})\}+d.
\]
Hence, by Lemma \ref{L:filabmain}, the map $\phi:R^n\to N$ is surjective, and this completes the proof.\end{proof}

\begin{proof}[Proof of Proposition \ref{P:fsn}]  Let $N$ be an $R$-subspace of $M$. Then $N$ is a separated filtered $R$-space under the induced filtration $F^i(N)=N\cap F^i(M)$ and $\gr(N)$ is naturally a $\gr(R)$-subspace of $\gr(M)$. Since $\gr(M)$ is strongly Noetherian, we can find finitely many elements $y_1,\dots, y_n$ in $ N$ such that 
\[
\gr(N)=\gr(R)\sigma(y_1)+\dots +\gr(R)(y_n).\]
  By Lemma \ref{L:lifting generators}, we have $N=Ry_1+\dots +Ry_n$. Hence $M$ is strongly Noetherian.
\end{proof}

\section{The Artin--Rees lemma for complete local-filtered rings}\label{S:AR}

We will now prove assertions \ref{A1} to \ref{A4} in the setting of filtered spaces.  As noted in Section \ref{S:introduction}, the key idea is to consider rings with good filtrations and derive the sought results from associated gradations. Motivated by Zariskian filtrations in the associative setting, we  consider filtered rings with strongly Noetherian Rees rings in Section \ref{ss:rr}. We then specialise to filtered rings with affine associated graded rings in  the remaining sections;  a link with local rings is established in Section \ref{ss:affine}, and we prove the Artin--Rees lemma for filtered  spaces in Section \ref{ss:AR}.

\subsection{The Rees ring} \label{ss:rr} Let $R$ be a filtered ring with filtration $(F^i(R))$. As in the associative setting, we define the Rees ring $\rr(R)$ of $R$ to be  the subring of $R[X]$ given by
\begin{equation}\label{eq:reesring}\mathbf{R}(R)=R\oplus F^1(R)X\oplus F^2(R)X^2\oplus \dots.\end{equation}
If $M$ is a filtered $R$-space, then the associated Rees space $\rr(M)\subseteq M[X]$ is defined in a similar fashion: 
\begin{equation}\label{eq:reesspace}\rr(M):=F^0(M)\oplus F^1(M)X\oplus F^2(M)X^2\oplus \dots .\end{equation}It is easily checked that $\rr(M)$ is, indeed, an $\rr(R)$-space.

Let $M$ be a filtered $R$-space, and let
 $A$ denote the ring $R$ or the space $M$. Recall that $A[X]$ has a natural filtration with associated valuation extending the valuation on $A$  and  the indeterminate $X$ having valuation $1$ (see Example \ref{E:filpoly}), and $\rr(A)\subseteq A[X]$ inherits the filtration. Explicitly, the  induced filtration $(F^\ast\rr(A))$ on  $\rr(A)$ is given by
\begin{equation}\label{eq:reesfil}
F^n(\rr(A)) =\big\{\, f\in \rr(A)\; |\; v(f)\geq n\,\big\}=\bigoplus_{\substack{ j\leq i \\ n\leq i+j}} F^i(A)X^j\end{equation}
with valuation $v$ given by the equality \eqref{eq:valfilpoly}.
The associated graded object $\gr(\rr(A))$ is bigraded with $(i,j)$-th homogeneous component $({F^i(A)}/{F^{i+1}(A)})X^j$ when $ i\geq j\geq 0$, and $0$ otherwise. Combining with \ref{fab:gag}, we obtain canonical identifications
\begin{equation} \label{eq:monomials}
\gr(\rr(A))\cong 
\bigoplus_{i=0}^\infty\left(\bigoplus_{j=i}^\infty\frac{F^j(A)}{F^{j+1}(A)}X^i\right)\cong \rr(\gr(A)).
\end{equation}
Thus  we have a natural identification of  $\gr(\rr(A))$ as a subset of  $\gr(A[X])$, and a straightforward  check shows that $\gr(\rr(M))$ is a filtered $\gr(\rr(R))$-space.

  We can  now state  a proposition on
the strong Noetherian property for  Rees rings and spaces. 

\begin{proposition}\label{P:RR1} Let $R$ be  a complete filtered ring and let $M$ be a separated filtered $R$-space. If  $\gr(\rr(M))$ is strongly Noetherian, then $\rr(M)$ is strongly Noetherian.  
\end{proposition}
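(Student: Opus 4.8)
The plan is to deduce Proposition~\ref{P:RR1} from Proposition~\ref{P:fsn}. The statement of Proposition~\ref{P:fsn} applies to a complete filtered ring and a separated filtered space over it, and concludes that the strong Noetherian property of the associated graded space lifts to the space itself. So the strategy is: take the conclusion ``$\gr(\rr(M))$ is strongly Noetherian'' as hypothesis, and check that the pair $(\rr(R),\rr(M))$ satisfies the hypotheses of Proposition~\ref{P:fsn}, namely that $\rr(R)$ is a \emph{complete} filtered ring and $\rr(M)$ is a \emph{separated} filtered $\rr(R)$-space, with $\gr(\rr(M))$ strongly Noetherian over $\gr(\rr(R))$. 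Granting those three points, Proposition~\ref{P:fsn} applied to $\rr(R)$ and $\rr(M)$ immediately gives that $\rr(M)$ is strongly Noetherian, which is exactly the claim.

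First I would nail down the filtration being used on $\rr(R)$ and $\rr(M)$: it is the one inherited from $A[X]$ as displayed in \eqref{eq:reesfil}, whose associated graded identifications are \eqref{eq:monomials}. Those identifications already record that $\gr(\rr(M))$ is a filtered $\gr(\rr(R))$-space, so the ``strongly Noetherian $\gr(\rr(R))$-space'' hypothesis in Proposition~\ref{P:fsn} is literally the hypothesis of Proposition~\ref{P:RR1}. Next, separatedness of $\rr(M)$: since $\rr(M)\subseteq M[X]$ and the valuation on $M[X]$ from \eqref{eq:valfilpoly} has the property $v(f)=\infty$ only when every coefficient has infinite valuation, separatedness of $M$ forces $\bigcap_n F^n(\rr(M))=(0)$. (Alternatively, $M[X]$ is separated by Example~\ref{E:filpoly} when $M$ is separated, and a subspace of a separated filtered abelian group is separated by \ref{fab:subgroup}.)

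The one genuinely substantive point—and the step I expect to need the most care—is completeness of the filtered ring $\rr(R)$. Unlike $R[X]$, which is only separated, the Rees ring sits between $R[X]$ and $R[[X]]$, and with the valuation \eqref{eq:valfilpoly} a Cauchy sequence in $\rr(R)$ can accumulate infinitely many nonzero low-degree coefficients, so one must verify that the limit—taken in $R[[X]]$, which \emph{is} complete by Example~\ref{E:poly}—actually lands back in $\rr(R)=\bigoplus_j F^j(R)X^j$ (equivalently, that the coefficient of $X^j$ lies in $F^j(R)$). The argument is: given a Cauchy sequence, its limit $\sum_j c_j X^j$ in $R[[X]]$ exists; for each fixed $j$, once the sequence is $v$-close enough the $X^j$-coefficients agree modulo $F^{j+1}(R)$ and the tail elements have valuation $>j$, which pins $c_j$ down as an element of $F^j(R)$ using completeness of $R$ itself (or just that $F^j(R)$ is closed in $R$). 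So $\rr(R)$ is complete. With these three verifications in hand—$\gr(\rr(M))$ strongly Noetherian over $\gr(\rr(R))$ (hypothesis), $\rr(M)$ separated, $\rr(R)$ complete—Proposition~\ref{P:fsn} applies verbatim and finishes the proof.

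Finally, I would state explicitly that the same valuation computations apply with $A=M$ to confirm $\rr(M)$ is a filtered $\rr(R)$-space in the sense required (i.e.\ $F^i(\rr(R))F^j(\rr(M))\subseteq F^{i+j}(\rr(M))$), which follows from $F^i(R)F^j(M)\subseteq F^{i+j}(M)$ together with the bookkeeping in \eqref{eq:reesfil}; this is routine and I would not belabour it. The proof is then one line invoking Proposition~\ref{P:fsn}.
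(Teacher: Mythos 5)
Your reduction to Proposition \ref{P:fsn} founders on the one step you yourself flag as ``genuinely substantive'': the Rees ring $\rr(R)$ is \emph{not} complete in general, and the paper says so explicitly in its proof of this proposition (``we cannot hope to use the method of Lemma \ref{L:lifting generators} directly as the Rees ring $\rr(R)$ may not be complete''). The error in your completeness argument is the parenthetical claim that landing back in $\rr(R)=\bigoplus_j F^j(R)X^j$ is ``equivalent'' to each coefficient $c_j$ of the limit lying in $F^j(R)$. It is not: by definition \eqref{eq:reesring}, $\rr(R)$ is a subring of the \emph{polynomial} ring $R[X]$, so membership also requires that all but finitely many $c_j$ vanish, and this fails for limits of Cauchy sequences. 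Concretely, take $R=k[[t]]$ with $F^i(R)=(t^i)$ and set $s_n=\sum_{j=0}^{n}t^jX^j\in\rr(R)$. By \eqref{eq:valfilpoly} one has $v(t^jX^j)=2j$, so $(s_n)$ is Cauchy; but if it converged to a polynomial $f$ of degree $N$, then for $n>N$ the difference $f-s_n$ would have $X^{N+1}$-coefficient $-t^{N+1}$, forcing $v(f-s_n)\leq 2(N+1)$ for all such $n$, a contradiction. In fact, since every homogeneous component $aX^j$ of an element of $\rr(R)$ automatically satisfies $v(aX^j)\geq 2j$, the completion of $\rr(R)$ is the full set $\bigl\{\sum_{j\geq 0}a_jX^j : a_j\in F^j(R)\bigr\}$, which strictly contains $\rr(R)$ whenever infinitely many $F^j(R)$ are nonzero. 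So the hypotheses of Proposition \ref{P:fsn} are not met and the one-line invocation does not go through. (Your other two verifications---separatedness of $\rr(M)$ and the identification of the graded objects---are fine.)

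The paper's proof repairs exactly this defect by never completing the Rees ring: for a subspace $\mathcal{N}\subseteq\rr(M)$ it fixes a degree $d$ and studies the $R$-space $\mathcal{N}(d)$ of leading coefficients of degree-$d$ elements, mapping onto it from the product $F^{d-d_1}(R)\times\dots\times F^{d-d_n}(R)$, which \emph{is} complete (a finite product of closed subgroups of the complete group $R$); Lemma \ref{L:filabmain} then gives surjectivity degree by degree (Claim \ref{claim:RR1}), and a minimal-degree descent finishes the argument. To keep your structure you would have to either prove a variant of Proposition \ref{P:fsn} valid for the non-complete $\rr(R)$, or pass to the completed Rees ring and then separately descend strong Noetherianity back to $\rr(M)$---neither of which is shorter than the paper's direct argument.
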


We begin the proof of Proposition \ref{P:RR1} with a definition. Let $M$ be a filtered $R$-space, and let
 $A$ denote $M$ or the $R$-space $R$. We define the \emph{leading monomial part} map $\varphi : \rr(A)\to \gr(\rr(A))$ as follows: if $\alpha\in \rr(A)$, then
\begin{equation}\label{eq:leadmon}
\varphi(\alpha)=\begin{cases}
 \sigma(a_n)X^n, & \text{if  $\alpha=a_0+\dots +a_nX^n$ has degree $n$};\\
 0, &\text{if  $\alpha=0$.}\end{cases}\end{equation}
  As in the case of the principal part map $\sigma: A\to \gr(A)$, the leading monomial part map preserves  multiplication only in specific cases, one of which is covered by the following lemma.

\begin{lemma} \label{L:lmp} Let $\alpha\in \rr(R)$ and $\beta\in \rr(M)$. If  $\varphi(\alpha)\varphi(\beta)\neq 0$, then $\varphi(\alpha\beta)=\varphi(\alpha)\varphi(\beta)$. \end{lemma}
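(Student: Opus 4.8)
The plan is to mimic the standard argument that the principal-part (symbol) map is multiplicative on pairs whose product of symbols is nonzero, now one level up for the leading-monomial part map $\varphi$ on Rees objects. First I would write $\alpha = a_0 + \dots + a_mX^m \in \rr(R)$ of degree $m$ (so $a_i \in F^i(R)$ and $a_m \in F^m(R)$) and $\beta = b_0 + \dots + b_nX^n \in \rr(M)$ of degree $n$ (so $b_j \in F^j(M)$ and $b_n \in F^n(M)$), the case where either is zero being trivial since then $\varphi(\alpha)\varphi(\beta) = 0$. Then $\varphi(\alpha) = \sigma(a_m)X^m$ and $\varphi(\beta) = \sigma(b_n)X^n$, and the hypothesis $\varphi(\alpha)\varphi(\beta)\neq 0$ says precisely that the product $\sigma(a_m)X^m \cdot \sigma(b_n)X^n$ is nonzero in $\gr(\rr(M))$; unwinding the identification \eqref{eq:monomials}, this forces $\sigma(a_m)\sigma(b_n)\neq 0$ in $\gr(M)$ (note that $a_m, b_n$ have valuations exactly $m, n$ in $R$ and $M$ respectively, since they are the top coefficients of degree-$m$ and degree-$n$ elements of the Rees objects, and one checks the bidegree $(m+n, m+n)$ component of $\varphi(\alpha)\varphi(\beta)$ is $\sigma(a_m)\sigma(b_n)X^{m+n}$).

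Next I would compute $\alpha\beta$ in $\rr(M) \subseteq M[X]$. Since multiplication in $M[X]$ is by the usual convolution, the coefficient of $X^{m+n}$ in $\alpha\beta$ is $a_mb_n$ (the top-degree term), and all coefficients of $X^k$ for $k > m+n$ vanish. So if $a_mb_n \neq 0$ then $\alpha\beta$ has degree exactly $m+n$ with leading coefficient $a_mb_n$, and hence $\varphi(\alpha\beta) = \sigma(a_mb_n)X^{m+n}$. Now I invoke Lemma \ref{L:val}: since $\sigma(a_m)\sigma(b_n)\neq 0$ in $\gr(M)$, we get $v(a_mb_n) = v(a_m) + v(b_n) = m + n$ and $\sigma(a_mb_n) = \sigma(a_m)\sigma(b_n)$. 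In particular $a_mb_n \neq 0$, which justifies the degree computation above, and therefore
\[
\varphi(\alpha\beta) = \sigma(a_mb_n)X^{m+n} = \sigma(a_m)\sigma(b_n)X^{m+n} = \bigl(\sigma(a_m)X^m\bigr)\bigl(\sigma(b_n)X^n\bigr) = \varphi(\alpha)\varphi(\beta),
\]
as desired.

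The only genuinely delicate point — and the one I would be careful to spell out — is the bookkeeping in the identification \eqref{eq:monomials}: making sure that "multiplication of monomials $\sigma(a_m)X^m$ in $\gr(\rr(M))$" really does correspond, under the canonical isomorphism $\gr(\rr(A))\cong \rr(\gr(A))$, to the product $\sigma(a_m)\sigma(b_n)X^{m+n}$, so that nonvanishing of $\varphi(\alpha)\varphi(\beta)$ is equivalent to nonvanishing of $\sigma(a_m)\sigma(b_n)$. This is where the constraint $i \geq j$ in the bigrading matters: one must check that the top coefficients $a_m, b_n$ indeed sit in the "diagonal" part of the Rees filtration (they do, because $v(a_m) \geq m = \deg_X$ forces $a_m \in F^m(R)$, and nothing more). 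Everything else is formal: it is just the observation that $\varphi$ is a "leading term" operation on a polynomial-type object and leading terms multiply whenever they don't collide to zero, with Lemma \ref{L:val} supplying exactly the guarantee that they don't collide.
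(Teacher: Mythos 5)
Your argument is correct and is exactly the paper's (the paper's proof reads simply ``Immediate from Lemma \ref{L:val}''): reduce to the leading coefficients $a_m,b_n$, observe the hypothesis means $\sigma(a_m)\sigma(b_n)\neq 0$ in $\gr(M)$, and apply Lemma \ref{L:val} to get $a_mb_n\neq 0$ and $\sigma(a_mb_n)=\sigma(a_m)\sigma(b_n)$, so that $\alpha\beta$ has degree exactly $m+n$ and the leading monomials match. One harmless slip: $a_m$ need only satisfy $v(a_m)\geq m$, not $v(a_m)=m$ (so the bidegree of $\varphi(\alpha)\varphi(\beta)$ is $(v(a_m)+v(b_n),\,m+n)$, not necessarily $(m+n,m+n)$), but nothing in your chain of implications uses the exact value, and your closing remark already states the correct inequality.
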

\begin{proof} Immediate from Lemma \ref{L:val}.
\end{proof}

We now return to the proof of Proposition \ref{P:RR1}.  Let $\mathcal{N}$ be an $\rr(R)$-subspace $\rr(M)$, and let $\overline{\varphi(\mathcal{N})}$ be the $\gr(\rr(R))$-subspace of $\gr(\rr(M))$ generated by $\varphi(\mathcal{N})$. Then by Lemma \ref{L:lmp} above, we see that 
\[\overline{\varphi(\mathcal{N})}=\gr(\rr(R))\varphi(\mathcal{N})=\left\{\,\text{finite sums $\sum\varphi(\eta)\, \Big|\, \eta\in \mathcal{N}$}\,\right\}.\]
Hence,  since $\gr(\rr(M))$ is strongly Noetherian, we can find finitely many non-zero elements $\eta_1,\dots , \eta_n$ in $\mathcal{N}$ such that 
\[ \overline{\varphi(\mathcal{N})}=\gr(\rr(R))\varphi(\eta_1)+\dots +\gr(\rr(R))\varphi(\eta_n).\]
Our  aim then is to  show that $\mathcal{N}=\rr(R)\eta_1+\dots+\rr(R)\eta_n$.  

Note that we cannot hope to use the method of Lemma \ref{L:lifting generators} directly as the Rees ring $\rr(R)$ may not be complete. We  get around this issue by working with elements of a fixed degree as follows. 

For each non-negative integer $j\in \N_0$, we define---as  in the displayed description \eqref{eq:LT}---the  $R$-space $\mathcal{N}(j)\subseteq M$ by  
\[ \mathcal{N}(j):=\{\, m\in M\, \vert\, (mX^j+\text{lower degree terms}) \in \mathcal{N}\,\}.\]
It is clear from the definition of $\rr(M)$ that $\mathcal{N}(j)\subseteq F^j(M)$.
Also, for each index $i=1,\dots ,n$, let $d_i\in \N_0$ be the degree of $\eta_i$ and let $a_i\in M$ be the leading coefficient of $\eta_i$. Thus 
\begin{equation}\label{eq:etai}
\eta_i=a_iX^{d_i}+\text{lower degree terms}, \quad\text{and}\quad 
\varphi(\eta_i)=\sigma(a_i)X^{d_i}.
\end{equation}

We then have the following claim.

\begin{claim}\label{claim:RR1} Let $\eta_i=a_iX^{d_i}+\text{lower degree terms}$, $i=1,\dots , n$, be as above. If $j$ is a negative integer, then we  set $F^j(R)=0$. Fix a non-negative integer  $d\in\N_0$, and define the map
\[ \Phi :  F^{d-d_1}(R)\times \dots \times F^{d-d_n}(R)\to \mathcal{N}(d)\]
 by $\Phi(r_1,\dots ,r_n):=r_1a_1+\dots +r_na_n$. Then $\Phi$ is surjective.
\end{claim}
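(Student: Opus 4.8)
The statement is a fixed-degree analogue of Lemma \ref{L:lifting generators}, so the plan is to mimic that proof but with $R^n$ replaced by the complete filtered abelian group $F^{d-d_1}(R)\times\dots\times F^{d-d_n}(R)$, and with the leading-monomial map $\varphi$ playing the role of the principal part map $\sigma$. First I would observe that the domain $G:=F^{d-d_1}(R)\times\dots\times F^{d-d_n}(R)$, equipped with the product filtration, is a complete filtered abelian group: each factor $F^{d-d_i}(R)$ is a closed subgroup of the complete group $R$ (under the induced filtration $F^k(F^{d-d_i}(R))=F^{d-d_i}(R)\cap F^k(R)=F^{\max(k,\,d-d_i)}(R)$), hence complete by \ref{fab:subgroup}, and finite products of complete filtered abelian groups are complete with valuation given by the minimum, as in \eqref{eq:valprod}. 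The target $\mathcal{N}(d)$, as an $R$-subspace of the separated space $M$, is separated under its induced filtration. So the map $\Phi$ is a continuous additive homomorphism from a complete filtered abelian group to a separated one, and it suffices by Lemma \ref{L:filabmain} to verify hypothesis \ref{fag2} for $\Phi$ with a suitable constant $K$.

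The heart of the argument is the verification of \ref{fag2}. Fix a non-zero $m\in\mathcal{N}(d)$; by definition there is some $\eta\in\mathcal{N}$ of the form $\eta=mX^d+\text{lower degree terms}$, so $\varphi(\eta)=\sigma(m)X^d$ provided $\sigma(m)\neq 0$ — and indeed $m\in\mathcal{N}(d)\subseteq F^d(M)$, with $v(m)=d$ exactly when $\sigma(m)\neq 0$; I will treat the case $v(m)>d$ (i.e.\ $\sigma(m)=0$) separately since then $m$ is already ``small'' and the inequalities are easy, or one can simply note $\varphi(\eta)$ still lies in $\overline{\varphi(\mathcal{N})}$ and argue with its actual leading monomial. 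In the generic case, since $\varphi(\eta)\in\overline{\varphi(\mathcal{N})}=\gr(\rr(R))\varphi(\eta_1)+\dots+\gr(\rr(R))\varphi(\eta_n)$, I expand $\varphi(\eta)=\sigma(m)X^d$ as a $\gr(\rr(R))$-linear combination of the $\varphi(\eta_i)=\sigma(a_i)X^{d_i}$ and separate into homogeneous (bigraded) components; matching the $X$-degree $d$ forces the coefficient of $\varphi(\eta_i)$ to be a homogeneous element of $\gr(\rr(R))$ in bidegree $(\,d-d_i,\,d-d_i\,)$ — recall from \eqref{eq:monomials} that $\gr(\rr(R))$ lives in bidegrees $(i,j)$ with $i\ge j$, and the $X$-degree here is $d-d_i$ while the total internal degree must make the product land in internal degree $d$, so the relevant component sits in $(F^{d-d_i}(R)/F^{d-d_i+1}(R))X^{d-d_i}$. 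Lifting each such component to an element $r_i\in F^{d-d_i}(R)$ with $\sigma(r_i)$ equal to that component (or $r_i=0$ if the component vanishes), I get $\sigma(m)=\sum_{i}\sigma(r_i)\sigma(a_i)$ with, for each $i$, either $r_i=0$ or $v(r_i)=d-d_i$ and $v(r_i)+v(a_i)=(d-d_i)+d_i=d=v(m)$. By Lemma \ref{L:val} this gives $\sigma(\Phi(r_1,\dots,r_n))=\sum\sigma(r_i)\sigma(a_i)=\sigma(m)$, whence $v(m-\Phi(r_1,\dots,r_n))>v(m)=d$; and $v(r_1,\dots,r_n)=\min_i v(r_i)\ge d-\max_i d_i$, so $v(m)=d\le v(r_1,\dots,r_n)+\max_i d_i$. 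Thus \ref{fag2} holds with $K=\max\{d_1,\dots,d_n\}$, which is independent of the chosen $m$ (though it does depend on $d$ — that is fine, since \ref{fag2} is applied for each fixed $d$).

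\textbf{Main obstacle.} The delicate point is the bookkeeping with the bigrading: one must be careful that when $\varphi(\eta)$ is written as a combination $\sum c_i\varphi(\eta_i)$ with $c_i\in\gr(\rr(R))$, discarding all but the part of $c_i$ that is homogeneous of the correct $X$-degree still yields a valid expression for $\sigma(m)X^d$, and that this homogeneous part genuinely lifts to $F^{d-d_i}(R)$ rather than to a larger piece of $R$ — this is exactly where the constraint $\mathcal{N}(d)\subseteq F^d(M)$ and the shape of $\rr(R)$ in \eqref{eq:reesring} are used. A secondary nuisance is the degenerate cases: $m$ with $\sigma(m)=0$ but $m\ne 0$ (handled by noting $v(m)>d$, then working with the true leading monomial of a representative $\eta$, or by a direct small-valuation argument), and indices $i$ with $d-d_i<0$, for which the convention $F^{d-d_i}(R)=0$ makes the corresponding factor trivial and forces $r_i=0$, consistently with the fact that such $\eta_i$ cannot contribute to degree-$d$ leading terms. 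Once \ref{fag2} is established, surjectivity of $\Phi$ is immediate from Lemma \ref{L:filabmain}, completing the claim.
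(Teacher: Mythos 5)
Your proof is correct and follows essentially the same route as the paper: reduce to Lemma \ref{L:filabmain} via the product filtration on $F^{d-d_1}(R)\times\dots\times F^{d-d_n}(R)$, and verify hypothesis \ref{fag2} by expanding $\varphi(\eta)=\sigma(m)X^d$ in $\overline{\varphi(\mathcal{N})}$ and extracting the appropriate bihomogeneous components, with $K=\max\{d_1,\dots,d_n\}$. The only cosmetic slip is the aside identifying ``$v(m)>d$'' with ``$\sigma(m)=0$'' --- for $m\neq 0$ in the separated space $M$ one always has $\sigma(m)\neq 0$, sitting in degree $v(m)$, so $\varphi(\eta)=\sigma(m)X^d$ has bidegree $(v(m),d)$ and the argument is uniform without a case split, which is exactly how the paper proceeds.
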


\begin{proof}[Proof of Claim \ref{claim:RR1}] We give $F^{d-d_1}(R)\times \dots \times F^{d-d_n}(R)$ and $\mathcal{N}(d)$ their natural filtrations (inherited from $R^n$ and $M$, respectively; see the displayed equalities \eqref{eq:filprod} and \eqref{eq:valprod}). Thus an element $(r_1,\dots ,r_n)\in F^{d-d_1}(R)\times \dots \times F^{d-d_n}(R)$ has valuation
\[ v(r_1,\dots, r_n)=\min\{v(r_1), \dots ,v(r_n)\}.\] It is clear that $\Phi$ is  a well defined, continuous, additive homomorphism from a complete filtered abelian group to a separated abelian group.

Consider a non-zero element $0\neq m\in \mathcal{N}(d)$. We choose an element $\alpha\in \mathcal{N}$ of degree $d$ and leading coefficient $m$ so that
\[
\alpha=mX^d+\text{lower degree terms}\quad\text{and}\quad \varphi(\alpha)=\sigma(m)X^d.\] 
Since $\overline{\varphi(\mathcal{N})}=\gr(\rr(R))\varphi(\eta_1)+\dots +\gr(\rr(R))\varphi(\eta_n)$, we can write 
\[\sigma(m)X^d=\mathfrak{r}_1 \sigma(a_1)X^{d_1}+ \dots +\mathfrak{r}_n\sigma(a_n)X^{d_n}\] for some $\mathfrak{r}_1,\dots ,\mathfrak{r}_n$ in $\gr(\rr(R))$. Separating  out each $\mathfrak{r}_i$ into homogeneous parts (using the monomial decomposition of $\gr(\rr(R))$ given by equality \eqref{eq:leadmon}), and using Lemma \ref{L:val}, we see that we  can find an element 
\[
(r_{1},\dots , r_{n})\in F^{d-d_1}(R)\times \dots \times F^{d-d_n}(R)\]
 such that
\begin{itemize}
\item $\sigma(m)=\sigma(r_{1})\sigma(a_1)+\dots +\sigma(r_{n})\sigma(a_n)$, and
\item for each $i=1,\dots , n$, the component $r_{i}\in F^{d-d_i}(R)$ is either equal to $0$, or else we have $v(r_{i})+v(a_i)= v(m)$.
\end{itemize}
The two properties of $(r_1,\dots ,r_n)$ listed immediately above show that we have inequalities 
\begin{align*} v(m)& <v \big(m-(r_1a_1+\dots +r_na_n)\big) \quad\text{and}\\
v(m)&\leq \min\{ v(r_1),\dots ,v(r_n)\}+ \max\{v(a_1),\dots ,v(a_n)\}.\end{align*}
The surjectivity of $\Phi$ now follows from Lemma \ref{L:filabmain}.
\end{proof}

\begin{proof}[Completion of proof of Proposition \ref{P:RR1}] We will now show that 
$\mathcal{N}$ is spanned by the elements $\eta_1, \dots, \eta_n$.  
 For a contradiction, let us suppose that $\mathcal{N}$ strictly contains the abelian group $\rr(R)\eta_1+\dots+\rr(R)\eta_n$. We can then pick an element  
\[\eta\in \mathcal{N}\mysetminus (\rr(R)\eta_1+\dots+\rr(R)\eta_n)\]
 of minimal degree, say $d$. Let $m\in M$ be the leading coefficient of $\eta$; so 
\[ \eta=mX^d+\text{lower degree terms}, \quad \text{and}\quad \varphi(\eta)=\sigma(m)X^d.\]
By Claim \ref{claim:RR1}, we can find an element $(r_1,\dots ,r_n)\in  F^{d-d_1}(R)\times \dots \times F^{d-d_n}(R)$ such that
$r_1a_1+\dots +r_na_n=m$. 

Now consider $\widetilde{\eta}:= r_1X^{d-d_1}\eta_1+\dots +r_nX^{d-d_n}\eta_n$. Then  $\widetilde{\eta}$ has leading term $mX^d$, and is an element of 
$\rr(R)\eta_1+\dots+\rr(R)\eta_n$. But then 
\[\eta-\widetilde{\eta}\notin \big(\rr(R)\eta_1+\dots+\rr(R)\eta_n\big)\] and the degree of $\eta-\widetilde{\eta}$ is strictly less than $d$, contradicting our choice of $\eta$.\end{proof}

\subsection{Complete local-filtered rings} \label{ss:affine}

From here on, we will be primarily interested in the following  class of filtered rings.

\begin{definition}\label{D:CLF} We say that a filtered ring  $R$ is a
 \emph{complete local-filtered ring}, or that $R$ has a complete local-filtration,  if its filtration $(F^i(R))$ is complete and satisfies the following properties.
\begin{enumerate}[label=(C\arabic*)]
\item \label{clf1}  The quotient  $R/F^1(R)$ is a field, which we refer to as the \emph{residue field} of $R$.
\item \label{clf2} The associated graded ring $\gr(R)$ is  a finitely generated  commutative $R/F^1(R)$-algebra and generated in degree $1$.
\end{enumerate}
\end{definition}

We then have the following structure theorem for complete local-filtered rings.

\begin{theorem}\label{T:CLF} Let $R$ be a complete local-filtered ring. Then the following statements hold.
\begin{enumerate}[(1)]
\item \label{affine1} $R$ is a local ring with maximal ideal $F^1(R)$.
\item \label{affine2}$R$ is strongly left noetherian.
\item \label{affine3}  For all integers $i,j\in\N_0$, we have $F^i(R)F^j(R)=F^{i+j}(R)$.
\item \label{affine4} The Rees ring $\rr(R)$ is strongly left Noetherian.
\end{enumerate}
\end{theorem}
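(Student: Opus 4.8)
The plan is to deduce all four parts of Theorem~\ref{T:CLF} from the hypotheses \ref{clf1}--\ref{clf2} on $\gr(R)$, lifting structure along the filtration using the completeness machinery of Section~\ref{ss:fag}. Note first that $\gr(R)$ is, by \ref{clf2}, a finitely generated commutative algebra over the field $k:=R/F^1(R)$, generated in degree $1$; hence $\gr(R)$ is a quotient of a polynomial ring $k[t_1,\dots,t_s]$ with the $t_i$ in degree $1$, so $\gr(R)$ is a Noetherian commutative ring, and in particular every ideal (equivalently, since it is associative and commutative, every $\gr(R)$-subspace of any finitely spanned $\gr(R)$-space) is finitely spanned. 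So $\gr(R)$ is strongly Noetherian as a ring.

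\textbf{Part \ref{affine3}.} The inclusion $F^i(R)F^j(R)\subseteq F^{i+j}(R)$ always holds, so I need the reverse. Because $\gr(R)$ is generated in degree $1$, its degree-$n$ component is $(\gr^1 R)^n$; unwinding, this says $F^n(R)= F^1(R)^n + F^{n+1}(R)$, i.e.\ $F^n(R)=F^1(R)\cdots F^1(R)+F^{n+1}(R)$ ($n$ factors). I would first prove $F^i(R)F^j(R)+F^{i+j+1}(R)=F^{i+j}(R)$ for all $i,j$ from the degree-$1$ generation hypothesis, and then bootstrap to the exact equality using completeness: given $x\in F^{i+j}(R)$, write $x=y_1+z_1$ with $y_1\in F^i(R)F^j(R)$ and $z_1\in F^{i+j+1}(R)$, then apply the same to $z_1$ to get $z_1 = y_2 + z_2$ with $y_2\in F^{i+1}(R)F^j(R)\subseteq F^i(R)F^j(R)$ (using $F^{i+1}\subseteq F^i$) and $z_2\in F^{i+j+2}(R)$, and iterate. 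Since $v(y_n)\to\infty$ and $R$ is complete, $\sum y_n$ converges by Lemma~\ref{L:filab convergence} to an element of the closure of the subgroup $F^i(R)F^j(R)$; but one needs this subgroup to be closed, which I would get from the fact that it is actually a term of a good filtration — more cleanly, I would phrase the iteration so that the partial sums $y_1+\dots+y_n$ already lie in $F^i(R)F^j(R)$ and differ from $x$ by an element of $F^{i+j+n}(R)$, so the limit equals $x$ provided $F^i(R)F^j(R)$ is closed; alternatively, and more robustly, I would instead show directly that $x$ lies in $F^i(R)F^j(R)$ by picking a fixed finite set of degree-$1$ elements whose images generate $\gr^1(R)$ and running the approximation with \emph{those} elements, so that the tails live in $F^i(R)F^j(R)$ on the nose rather than merely in its closure. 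This part is the linchpin, since \ref{affine1}, \ref{affine2}, \ref{affine4} all lean on it; I expect the closedness/convergence bookkeeping here to be the main obstacle and will organise the induction to avoid invoking closure of a product of groups.

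\textbf{Part \ref{affine1}.} Granting \ref{affine3}, the filtration is the $\m$-adic filtration for $\m:=F^1(R)$. To see $R$ is local with maximal ideal $\m$, it suffices to show every element of $R\smallsetminus\m$ is a unit. Given $u\notin\m$, its image in $k=R/\m$ is a nonzero element of a field, hence a unit mod $\m$; so there is $v_0$ with $uv_0\in 1+\m$. Writing $uv_0=1-w$ with $w\in\m$, the geometric-series element $1+w+w^2+\dots$ converges in $R$ by Lemma~\ref{L:filab convergence} (completeness plus $v(w^n)\ge n\to\infty$, using \ref{affine3} to know $w^n\in F^n(R)$), and — bearing in mind nonassociativity, so that "$w^n$" must be read as a fixed parenthesisation and $(1-w)(1+w+w^2+\dots)$ must be checked termwise — I would verify $uv_0\cdot(1+w+\dots)=1$, giving $u$ a right inverse; a symmetric argument on the left, or a standard one-sided-inverse-in-a-local-type-argument, yields a two-sided inverse. (Here I must be a little careful about left vs.\ right since $R$ need not be associative; I would run the geometric series on both sides.) This shows $\m$ is the unique maximal left ideal, i.e.\ $R$ is local.

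\textbf{Parts \ref{affine2} and \ref{affine4}.} For \ref{affine4}: by \eqref{eq:monomials} we have a canonical identification $\gr(\rr(R))\cong\rr(\gr(R))$; since $\gr(R)$ is a Noetherian commutative ring generated in degree $1$ over $k$, its Rees ring $\rr(\gr(R))$ is again a finitely generated commutative $k$-algebra (generated by the degree-$1$ generators of $\gr(R)$ together with $X$), hence Noetherian, hence strongly Noetherian as a ring. $R$ is complete, so $\rr(R)$ — wait, $\rr(R)$ itself need not be complete, which is exactly why Proposition~\ref{P:RR1} is stated for $\rr(M)$ with $M$ a separated filtered $R$-space; applying Proposition~\ref{P:RR1} with $M=R$ (which is complete, hence separated) gives that $\rr(R)=\rr(M)$ is strongly Noetherian, i.e.\ every left ideal of $\rr(R)$ is finitely spanned. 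For \ref{affine2}: apply Proposition~\ref{P:fsn} with $M=R$; since $\gr(R)$ is strongly Noetherian over itself by the first paragraph, $R$ is strongly left Noetherian. Alternatively \ref{affine2} follows from \ref{affine4} by a standard argument passing from the Rees ring to its degree-$0$ piece, but invoking Proposition~\ref{P:fsn} directly is cleaner. The only genuinely new work beyond citing Section~\ref{S:AR} is Part~\ref{affine3} and the unit argument in Part~\ref{affine1}; everything else is an assembly of the propositions already proved, with the nonassociativity caveats (fixed parenthesisations, left/right symmetry) flagged at each use of an infinite sum or product.
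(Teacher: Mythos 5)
Your parts (2), (3) and (4) are essentially the paper's argument: (2) is Proposition~\ref{P:fsn} applied to $M=R$, (4) is Proposition~\ref{P:RR1} applied to $M=R$ after observing that $\gr(\rr(R))\cong\rr(\gr(R))$ is an affine commutative algebra over the residue field, and for (3) the paper does exactly what your ``more robust'' variant proposes --- except that the fixed finite family must consist of elements of $F^j(R)$ whose principal parts span $F^j(R)/F^{j+1}(R)$ (e.g.\ degree-$j$ monomials in lifts of the degree-$1$ generators), not degree-$1$ elements: approximating against degree-$1$ elements only yields $F^{i+j}(R)\subseteq F^{i+j-1}(R)F^1(R)$, and you cannot re-bracket $(\dots(F^i(R)F^1(R))\dots)F^1(R)$ into $F^i(R)F^j(R)$ without associativity. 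With $x_1,\dots,x_n\in F^j(R)$ lifting a basis of $F^j(R)/F^{j+1}(R)$, the map $(F^i(R))^n\to F^{i+j}(R)$, $(r_1,\dots,r_n)\mapsto\sum r_mx_m$, satisfies the hypotheses of Lemma~\ref{L:filabmain} (complete source, separated target, one-step approximation from degree-$1$ generation), and its image is a finite sum of products, hence lies in $F^i(R)F^j(R)$; this is the paper's proof.

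The genuine gap is in part (1). From $uv_0=1-w$ with $w\in\m$ and the geometric series $s=\sum w^n$ (fixed parenthesisation $w^{n+1}:=w\cdot w^n$) you do get $(uv_0)s=1$ by telescoping, but you cannot pass from $(uv_0)s=1$ to $u(v_0s)=1$: that re-bracketing is precisely associativity, so this does not furnish a one-sided inverse of $u$ itself. Likewise ``a one-sided inverse yields a two-sided inverse'' is an associative-ring fact ($l=l(ur)=(lu)r=r$) and is unavailable here. What must be proved is that every proper left ideal lies in $F^1(R)$, and the paper gets this with no unit manipulation and no appeal to part (3): for $a\notin F^1(R)$ the principal part $\sigma(a)$ is a nonzero scalar in the residue field, so multiplication by $\sigma(a)$ is bijective on every homogeneous component of the commutative $\gr(R)$, and Lemma~\ref{L:filabmain} applied to the continuous map $x\mapsto xa$ shows it is surjective; hence $xa=1$ for some $x$, and any left ideal containing $a$ absorbs left multiplication and so contains $1$. (Your computation can be salvaged in the same spirit --- a left ideal containing $u$ contains $v_0u=1-w$ and hence $s(v_0u)=s-sw=1$ --- but as written the claims ``$u$ gets a right inverse'' and ``one-sided implies two-sided'' are false steps in the nonassociative setting.)
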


 Before proceeding to the proof of Theorem \ref{T:CLF},  we record the following note on  parts \ref{affine1} and \ref{affine3}  of Theorem \ref{T:CLF} (thereby  justifying our terminology in Definition \ref{D:CLF}).  Here and subsequently, we write `\emph{$(R,\m)$ is a complete local-filtered (resp. local) ring}' to express the condition that $R$ is a complete local-filtered (resp. local) ring with maximal ideal $\m$.
 
\begin{note}\label{N:CLF} Let $(R,\m)$ be a local ring, and let $n$ be a positive integer. For each choice of placing parenthesis in an $n$-letter word that allows evaluation, we can derive a corresponding $n$-fold product of $\m$. In general, different choices of inserting parenthesis will lead to different additive subgroups of $R$. 
 
 Suppose now that the local ring $(R,\m)$ has a complete local-filtration.  Then part \ref{affine1} and part \ref{affine3} of Theorem \ref{T:CLF} imply that all choices of inserting parenthesis will give the same additive subgroup and we obtain a uniquely defined $n$-fold product $\m^n$. For definiteness, recursively set $\m^{n+1}:=\m(\m^n)$ for integers  $n\in \N_0$ with $\m^0:=R$. Then each $\m^n$ is an ideal of $R$ and
 $(\m^n\mid n=0,1,2,\dots)$ is \emph{the} complete local-filtration. Furthermore, the equality $\m^i\m^j=\m^{i+j}$ holds for all non-negative integers $i$ and $j$.\end{note}

\begin{proof}[Proof of Theorem \ref{T:CLF}] We denote the residue field $F^0(R)/F^1(R)$ by $\pmb{k}$ throughout this proof.
\begin{enumerate}[label=(Part \arabic*) ,   wide=0pt , itemsep=3pt]
\item To show that $R$ is a local ring, consider an element $a\in R\mysetminus F^1(R)$. Now multiplication by $a$ induces an isomorphism on each homogeneous component of $\gr(R)$.  The continuous additive map $\phi :R\to R$ given by $\phi(x)=xa$ therefore satisfies the hypothesis of Lemma \ref{L:filabmain}: that is, given $y\in R$ we can find $x\in R$ such that $x\in R$ such that $v(y)<v(y-\phi(x))$ and $v(y)=v(x)$. Hence, by Lemma \ref{L:filabmain}, the map $\phi :R\to R$ is surjective and $a$ has a left inverse. It follows that every proper  left ideal of $R$ is contained in $F^1(R)$, and therefore $R$ is a local ring with maximal ideal $F^1(R)$.

\item This follows from Proposition \ref{P:fsn}.

\item  Let $i, j$ be non-negative integers. Fix elements $x_1,\dots ,x_n$ in  $F^j(R)$ such that their principal parts $\sigma(x_1),\dots , \sigma(x_n)$ form a basis of  $F^j(R)/F^{j+1}(R)$ as a $\pmb{k}$-vector space, and define a map
\[ \Phi: (F^i(R))^{ n}\to F^{i+j}(R)\quad\text{by $\Phi(r_1,\dots ,r_n):=r_1x_1+\dots+r_nx_n$.}\] 
The filtration on $R$ induces filtrations on $ (F^i(R))^{ n}$ and $F^{i+j}(R)$ (see \ref{fab:subgroup}), and  the map $\Phi$ is a continuous group homomorphism from the complete abelian group $F^i(R)^n$ to the complete abelian group $F^{i+j}(R)$. Since $\gr(R)$ is a commutative $\pmb{k}$-algebra of finite type and  generated in degree $1$, the  multiplication map
\[\frac{F^k(R)}{F^{k+1}(R)}\times \frac{F^j(R)}{F^{j+1}(R)}\to \frac{F^{k+j}(R)}{F^{k+j+1}(R)}\] is surjective. Thus if $b\in {F^{k+j}(R)}\mysetminus {F^{k+j+1}(R)}$ we can find $(a_1,\dots ,a_n)\in (F^k(R))^n$ such that 
\[b\equiv \Phi(a_1,\dots ,a_n)\mod{F^{k+j+1}(R)}.\]
Consequently, we have $v(b)< v\left(b-\Phi(a_1,\dots ,a_n)\right)$ and $v(b)= v(a_1,\dots ,a_n)+j$.
Hence, by Lemma \ref{L:filabmain}, the map $\Phi$ is surjective. Therefore $F^i(R)F^j(R)\supseteq F^{i+j}(R)$, and the claim follows.

\item Suppose $\gr(R)$ is generated as a $\pmb{k}$-algebra by the elements $x_1,\dots , x_n$ in $F^1(R)/F^2(R)$. From the displayed  equality \eqref{eq:monomials}, we see that $\gr(\rr(R))$ is the $\gr(R)$-subalgebra of $\gr(R)[X]$ generated by $x_1X,\dots , x_nX$. Hence $\gr(\rr(R))$ is strongly Noetherian, and the result follows from Proposition \ref{P:RR1}.\qedhere
\end{enumerate}
\end{proof}

\subsection{The Artin-Rees lemma }\label{ss:AR}

Just as the in the case modules over local rings, the following  version of the  Artin--Rees lemma establishes when the topology on filtered spaces over a complete local-filtered ring is induced by powers of the  maximal ideal.

\begin{theorem}[Artin--Rees lemma]
\label{T:AR3} Let $R$ be a complete local-filtered ring, and let $M$ be a separated filtered $R$-space. Assume that  the associated space 
$\gr(M)$ is a finitely generated \emph{$\gr(R)$-module}. 
Then there exists an integer $D\in \N_0$ such that 
\[F^{n+d}(M)=F^n(R)F^d(M)\]
for all integers $n\in \N_0$ and $d\geq D$.
\end{theorem}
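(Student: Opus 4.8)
The strategy is to transfer the statement to the Rees space, where finiteness of the associated graded module lets us control things uniformly, and then descend. First I would introduce the Rees ring $\rr(R)$ and the Rees space $\rr(M)$ as in \eqref{eq:reesring} and \eqref{eq:reesspace}. By Theorem \ref{T:CLF}\ref{affine4}, $\rr(R)$ is strongly left Noetherian, and I will check that the hypothesis on $\gr(M)$ forces $\rr(M)$ to be finitely spanned over $\rr(R)$: indeed $\gr(\rr(M))\cong \rr(\gr(M))$ by \eqref{eq:monomials}, and since $\gr(M)$ is a finitely generated $\gr(R)$-module the Rees module $\rr(\gr(M))$ is finitely generated in a bounded range of bidegrees, hence strongly Noetherian over $\gr(\rr(R))\cong\rr(\gr(R))$; Proposition \ref{P:RR1} then gives that $\rr(M)$ is strongly Noetherian over $\rr(R)$. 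In particular $\rr(M)$ is finitely spanned, say $\rr(M)=\rr(R)\xi_1+\dots+\rr(R)\xi_s$, and I may take $D$ to be the maximum of the degrees of the $\xi_t$ (equivalently, the largest valuation index appearing among the generators).

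Next I would reinterpret the conclusion $F^{n+d}(M)=F^n(R)F^d(M)$ degree-by-degree inside $\rr(M)$. The inclusion $F^n(R)F^d(M)\subseteq F^{n+d}(M)$ is immediate from the definition of a filtered space. For the reverse inclusion, fix $d\geq D$ and $n\in\N_0$ and take $m\in F^{n+d}(M)$. Then $mX^{n+d}\in\rr(M)$, so I can write $mX^{n+d}=\sum_{t} \rho_t \xi_t$ with $\rho_t\in\rr(R)$; comparing the homogeneous component of degree $n+d$ in $M[X]$ and using $\deg\xi_t\le D\le d$, each contributing term has the shape $(r_tX^{n+d-\deg\xi_t})\cdot(c_tX^{\deg\xi_t})$ with $r_t\in F^{n+d-\deg\xi_t}(R)\subseteq F^n(R)$ (since $n+d-\deg\xi_t\ge n$) and $c_t\in F^{\deg\xi_t}(M)\subseteq F^d(M)$ only after I further push down — here I would instead be slightly more careful and, rather than using the global generators of $\rr(M)$, apply the Claim-style surjectivity argument (Claim \ref{claim:RR1}) to the subspace $\mathcal N=\rr(M)$ itself, which directly produces, for each target degree $n+d$, a factorization of $m$ as a sum of products $r_t a_t$ with $r_t\in F^{n+d-d_t}(R)$ and $a_t\in F^{d_t}(M)$ where $d_t=\deg\xi_t\le D$. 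Multiplying and regrouping, $m\in F^n(R)F^{d}(M)$ provided every generator has degree $\le d$, which is exactly the role of $D$. I then need to verify the bookkeeping that $F^{n+d-d_t}(R)\cdot F^{d_t}(M)\subseteq F^n(R)F^d(M)$; this uses Theorem \ref{T:CLF}\ref{affine3}, namely $F^{n+d-d_t}(R)=F^n(R)F^{d-d_t}(R)$, so that $F^{n+d-d_t}(R)F^{d_t}(M)\subseteq F^n(R)\big(F^{d-d_t}(R)F^{d_t}(M)\big)\subseteq F^n(R)F^{d}(M)$.

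\textbf{Main obstacle.} The delicate point is the first paragraph: establishing that the Rees space $\rr(M)$ is strongly Noetherian (or at least finitely spanned) from the assumption that $\gr(M)$ is a finitely generated $\gr(R)$-\emph{module} rather than merely a finitely spanned $\gr(R)$-\emph{space}. One must check that $\rr(\gr(M))$ is generated over $\rr(\gr(R))$ by finitely many homogeneous elements whose $X$-degrees are bounded — this is a Hilbert-basis-type fact for the graded commutative module $\gr(M)$ and its Rees module, and it is where the bound $D$ ultimately comes from. After that, Proposition \ref{P:RR1} does the lifting for free, and the descent in the second and third paragraphs is routine manipulation with \eqref{eq:reesfil}, \eqref{eq:monomials}, Claim \ref{claim:RR1}, and Theorem \ref{T:CLF}\ref{affine3}. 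A secondary subtlety is to phrase everything so that the single integer $D$ works simultaneously for all $n$ and all $d\ge D$, which is automatic once $D$ is taken as the maximal degree of a generating set of $\rr(M)$ that is fixed once and for all.
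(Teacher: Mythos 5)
Your first paragraph follows the paper's route exactly: pass to the Rees space, use the isomorphism $\gr(\rr(M))\cong\rr(\gr(M))$ together with Proposition \ref{P:RR1} to conclude that $\rr(M)$ is strongly Noetherian over $\rr(R)$, and extract a finite spanning set of monomials whose degrees bound $D$ (this is precisely Lemma \ref{L:AR1}). That part is fine, and the step you flag as the ``main obstacle'' (finite generation of $\rr(\gr(M))$ over $\rr(\gr(R))$) is in fact the easy part: it is ordinary commutative algebra for the affine ring $\gr(R)$ generated in degree $1$.

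The genuine gap is in the step you dismiss as ``bookkeeping,'' namely the claimed inclusion $F^{n+d-d_t}(R)F^{d_t}(M)\subseteq F^n(R)\bigl(F^{d-d_t}(R)F^{d_t}(M)\bigr)$. Unwinding it: by Theorem \ref{T:CLF}\ref{affine3} an element $u\in F^{n+d-d_t}(R)$ is a finite sum $\sum_j r_js_j$ with $r_j\in F^n(R)$ and $s_j\in F^{d-d_t}(R)$, so $ua=\sum_j (r_js_j)a$; to land in $F^n(R)F^d(M)$ you need to regroup $(r_js_j)a$ as $r_j(s_ja)$, and that is exactly the associativity that fails for a nonassociative ring acting on a space. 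Without it, $(r_js_j)a$ is merely some element of $F^{n+d}(M)$ and there is no reason it lies in the additive group $F^n(R)F^d(M)$ (recall from Definition \ref{D:fin span} that these products are only sets of finite sums of single products, not iterated ones). This regrouping is where the hypothesis that $\gr(M)$ is a $\gr(R)$-\emph{module} — which your descent argument never invokes — must enter: it guarantees $(r_js_j)a\equiv r_j(s_ja)\pmod{F^{n+d+1}(M)}$, i.e.\ associativity holds to leading order, and one then has to run a successive-approximation argument (Lemma \ref{L:filabmain}, applied to a map $(F^N(R))^{n_1}\times\dots\times(F^N(R))^{n_k}\to F^{N+d}(M)$ built from lifts of graded bases of the $F^{d-d_t}(R)/F^{d-d_t+1}(R)$) to convert this approximate identity into the exact equality $F^{N+d}(M)=F^N(R)F^d(M)$. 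This is the actual content of the paper's proof of Theorem \ref{T:AR3}; as written, your argument proves only the (weaker, and already known) identity $F^{n+d}(M)=\sum_t F^{n+d-d_t}(R)m_t$.
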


For the proof, we need the following lemma which constructs spanning sets that preserve each step of the filtration. (The integer $D$ in Theorem \ref{T:AR3} is derived from the valuations of elements in these spanning sets.) 

\begin{lemma}\label{L:AR1} Let $R$ be a complete filtered ring and let $M$ be a separated filtered $R$-space. Suppose the associated Rees space $\rr(M)$ is a strongly Noetherian $\rr(R)$-space. We can then find finitely many non-negative integers $d_1,\dots , d_k$ in $ \N_0$ and elements $m_i\in F^{d_i}(M)$ for $i=1,\dots , k$ such that the following holds: for all $n\in \N_0$, we have
\[ F^n(M)=F^{n-d_1}(R)m_1+\dots +F^{n-d_k}(R)m_k\]
where we take $F^i(R):=(0)$ when $i<0$.
\end{lemma}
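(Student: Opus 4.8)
The plan is to produce the spanning set for $\rr(M)$ using the strong Noetherian hypothesis, and then read off a filtration-respecting spanning set for each $F^n(M)$ by looking at the graded pieces of the Rees space.

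\textbf{Setup.} First I would recall that $\rr(M)=\bigoplus_{j\geq 0}F^j(M)X^j\subseteq M[X]$ is an $\rr(R)$-space, where $\rr(R)=\bigoplus_{i\geq 0}F^i(R)X^i$. Since $\rr(M)$ is strongly Noetherian by hypothesis, $\rr(M)$ itself is finitely spanned: there exist finitely many elements $\eta_1,\dots,\eta_k\in\rr(M)$ with $\rr(M)=\rr(R)\eta_1+\dots+\rr(R)\eta_k$. By expanding each $\eta_i$ into its $X$-homogeneous components and adding those components to the spanning set (there are only finitely many), I may assume each $\eta_i$ is homogeneous, say $\eta_i=m_iX^{d_i}$ with $m_i\in F^{d_i}(M)$ and $d_i\in\N_0$. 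This produces the claimed integers $d_1,\dots,d_k$ and elements $m_i\in F^{d_i}(M)$.

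\textbf{Extracting the $n$-th graded piece.} Now fix $n\in\N_0$. I want to show $F^n(M)=F^{n-d_1}(R)m_1+\dots+F^{n-d_k}(R)m_k$ (with $F^i(R):=(0)$ for $i<0$). The inclusion $\supseteq$ is immediate: if $d_i\leq n$ then $F^{n-d_i}(R)m_i\subseteq F^{n-d_i}(R)F^{d_i}(M)\subseteq F^n(M)$, and if $d_i>n$ the term is zero. For $\subseteq$, take $m\in F^n(M)$, so $mX^n\in\rr(M)$. Writing $mX^n=\sum_{i=1}^k r_i\eta_i$ with $r_i\in\rr(R)$, I then compare the $X^n$-homogeneous components of both sides: $\rr(R)$ and $\rr(M)$ are $\N_0$-graded (graded by $X$-degree), and the module action respects this grading, so I may replace each $r_i$ by its homogeneous component of $X$-degree $n-d_i$, which is an element $r_iX^{n-d_i}$ with $r_i\in F^{n-d_i}(R)$ (and this component is zero when $n-d_i<0$, consistent with the convention). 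Equating $X^n$-coefficients gives $m=\sum_i r_i m_i$ with $r_i\in F^{n-d_i}(R)$, which is exactly what is required.

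\textbf{Main obstacle.} The only genuine point needing care is the passage from ``$\rr(M)$ is finitely spanned'' to ``$\rr(M)$ is spanned by $X$-homogeneous elements'', i.e.\ making sure that decomposing the chosen generators into homogeneous pieces still yields a spanning set. This is routine once one notes that $\rr(R)$ is a graded ring (in the $X$-grading), $\rr(M)$ is a graded $\rr(R)$-space, and the homogeneous component of $X$-degree $\ell$ of a product $rX^i\cdot mX^j$ is $0$ unless $\ell=i+j$, in which case it is $(rm)X^\ell$ with $rm\in F^{i+j}(M)$ by the filtration axiom $F^i(R)F^j(M)\subseteq F^{i+j}(M)$; hence each homogeneous component of $\sum r_i\eta_i$ lies in the $\rr(R)$-span of the homogeneous components of the $\eta_i$. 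No completeness is needed here — this step is purely formal, in contrast to Lemma \ref{L:lifting generators} — so the argument is short. I would simply spell out the grading bookkeeping and then record the two inclusions above.
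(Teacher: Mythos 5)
Your proposal is correct and is essentially the paper's own argument: the paper likewise takes a finite spanning set of $\rr(M)$, replaces each spanning element by its monomial ($X$-homogeneous) parts, and then reads off the degree-$n$ component to get $F^n(M)=F^{n-d_1}(R)m_1+\dots+F^{n-d_k}(R)m_k$. The paper states this in two sentences; your write-up just supplies the grading bookkeeping (that homogeneous components of a spanning set still span, and that extracting the $X^n$-coefficient uses only bilinearity and $F^i(R)F^j(M)\subseteq F^{i+j}(M)$), all of which is accurate.
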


\begin{proof} Replacing each element of a finite spanning set with its monomial parts, we see that we can find finitely many non-negative integers 
 $d_1,\dots , d_k$ in $ \N_0$ and monomials $m_1X^{d_1},\dots , m_kX^{d_k}$ in $\rr(M)$ such that 
 \[\rr(M)=\rr(R)m_1X^{d_1}+\dots +\rr(R)m_k X^{d_k}.\]
Then each $m_i\in F^{d_i}(M)$ and $F^n(M)=F^{n-d_1}(R)m_1+\dots +F^{n-d_k}(R)m_k$.\end{proof}

\begin{proof}[Proof of Theorem \ref{T:AR3}] We view  $\gr(M)$ as a filtered $\gr(R)$-module with their natural filtrations (induced from the gradings; see \ref{fab:gag}). Since $\gr(R)$ is affine and generated in degree $1$, it follows that $\rr(\gr(M))$ is a Noetherian $\rr(\gr(R))$-module. Using the canonical isomrphisms
\[ \gr(\rr(R))\cong \rr(\gr(R)),\quad \gr(\rr(M))\cong\rr(\gr(M))\]
(see the displayed equalities \eqref{eq:leadmon}) and Proposition \ref{P:RR1}, we deduce that $\rr(M)$ is a strongly Noetherian $\rr(R)$-space.

By Lemma \ref{L:AR1}, we can then find $k$ non-negative integers $d_1,\dots , d_k$ in $\N_0$ and elements $m_i\in F^{d_i}(M)$ for each  $i=1,\dots , k$ such that 
\begin{equation}\label{eq:oneoff}
F^n(M)=F^{n-d_1}(R)m_1+\dots +F^{n-d_k}(R)m_k\end{equation}
 for all $n\in \N_0$. Set $D:=\max\{d_1,\dots ,d_k\}$, and let $d\geq D$. Then, using Theorem \ref{T:CLF}\ref{affine4}, we obtain
\begin{equation} \label{eq:ar3proof} F^{n+d}(M)=\left(F^n(R)F^{d-d_1}(R)\right)m_1+\dots +\left(F^n(R)F^{d-d_k}(R)\right)m_k\end{equation} 
for all $n\in \N_0$.

For each $i\in \{1,\dots , k\}$, let  $n_i$ be the dimension of $F^{d-d_i}(R)/F^{d-d_i+1}(R)$  as a vector space over the residue field of $R$. Choose 
elements $x(i,1),\dots , x(i,n_i)$ in $F^{d-d_i}(R)$ 
such that the principal parts 
$\sigma(x(i,1)),\dots , \sigma(x(i,n_i))$ form a basis of $F^{d-d_i}(R)/F^{d-d_i+1}(R)$. 

Now let $n\in\N_0$ and let $m\in F^{n+d}(M)\mysetminus F^{n+d+1}(M)$. Using the displayed equality \eqref{eq:ar3proof},  we can find elements $r(i,j)\in F^n(R)$, where $i\in \{1,\dots ,k\}$ and $j\in \{1,\dots ,n_i\}$ for each $i$,
such that 
\[\sum_{i=1}^k \Big(\sum_{j=1}^{n_i} r(i,j)(x(i,j)\Big)m_i\equiv m\pmod{F^{n+d+1}(M)}.\]
The assumption that $\gr(M)$ is a $\gr(R)$-module then implies that 
\[\sum_{i=1}^k \sum_{j=1}^{n_i} r(i,j)((x(i,j)m_i)\equiv m\pmod{F^{n+d+1}(M)}.\]
It follows that for a fixed $N\in\N_0$, the map 
\[(F^N(R))^{n_1}\times \dots \times (F^N(R))^{n_k}\to F^{N+d}(M)\]
given by 
\[ \big(r(1,1),\dots , r(1,n_1), \dots , r(k,1),\dots ,r(k,n_k)\big)\to \sum_{i=1}^k \sum_{j=1}^{n_i} r(i,j)((x(i,j)m_i)\]
satisfies the hypotheses of Lemma \ref{L:filabmain}, and therefore is surjective.
Since 
\[x(i,j)m_i\in F^{d-d_i}(R)F^{d_i}(M)\subseteq F^d(M),\] we obtain
$F^{N+d}(M)\subseteq F^N(R)(F^d(M)$. This completes the proof of Theorem \ref{T:AR3}.\end{proof}

As an application of Theorem \ref{T:AR3}, we now discuss the extent to which  the filtration on  spaces with cyclic associated gradation  is determined by the maximal ideal. The example below will be particularly relevant in Section \ref{ss:torsion} when we discuss torsion elements.

\begin{example}[Cyclic spaces] \label{E:CS} 
Let $(R,\m)$ be a complete filtered ring,  and let $M$ be a separated filtered $R$-space. We assume that $\gr(M)$ is a cyclic $\gr(R)$-module, and fix an element $x\in M$  whose principal part generates $\gr(M)$. Then, by Lemma \ref{L:lifting generators}, the element $x$ spans $M$: that is, we have $M=Rx$. By re-indexing the filtration on $M$ if necessary, we can assume that $x$ has valuation $0$. We will now show that the following statements hold.
\begin{enumerate}[label=(CS\arabic*) , font=\bfseries, ]
\item \label{CS1} The filtration on $M$ is given by 
$F^{n}(M)=\m^nx$ for all integers $n\in \N_0$.
\item \label{CS2} For all integers $i,j\in\N_0$, we have $\m^i(\m^jx)=\m^{i+j}x$.   \end{enumerate}

\begin{proof} For part \ref{CS1}, we need to check that the map $\phi:F^n(R)\to F^{n}(M)$ given by $\phi(r)= rx$ is surjective. Now  $\gr(M)=\gr(R)\sigma(x)$ where $\sigma(x)\in M/F^{1}(M)$ is the principal part of $x$. Therefore, if $j\geq n$, then the map
\[F^j(R)\xrightarrow{r\to rx+ F^{j+1}(M)} F^{j}(M)/F^{j+1}(M)\] is surjective. The surjectivity of $\phi$ now follows from Lemma \ref{L:filabmain}.

For part \ref{CS2}, comparing $F^n(M)=F^n(R)x$ against the displayed relation \eqref{eq:oneoff} shows that Theorem \ref{T:AR3} applies with $D=0$. Thus $F^{i+j}(M)=F^i(R)F^j(M)$ for all integers $i,j\in \N_0$.
\end{proof}

\end{example}

\section{Dimensions and sizes of permissible spaces}\label{S:DPS}

As we will now be  considering consequences of the Artin--Rees lemma, let us denominate the  class of spaces satisfying the hypotheses of Theorem \ref{T:AR3}.

\begin{definition} \label{D:permissible} Let $R$ be a  complete local-filtered ring.
\begin{enumerate}[(1)]
\item A filtered space $M$ is said to be  \emph{permissible} if  it is separated and the associated graded space
$\gr(M)$ is a finitely generated \emph{$\gr(R)$-module}. If $M$ is permissible, then we call the Krull dimension of $\gr(M)$ the \emph{associated dimension} of $M$ and denote it by $\dim(M)$.
\item We say that a filtration is a \emph{permissible filtration} if it is the filtration of a permissible $R$-space.
\end{enumerate}
 \end{definition}

Let $R$ be a complete local-filtered ring with maximal ideal $\m$.  We will now show that   the topology and the associated dimension of a permissible $R$-space are independent of the (choice of) permissible filtration. When $R$ has finite residue field, the independence of filtration allows us to determine the cardinality of $M/\m^nM$ asymptotically for any permissible $R$-space $M$. Finally, in  Section \ref{ss:torsion}, we relate associated dimensions of permissible spaces to torsion.

\subsection{Independence of permissible filtrations}\label{ss:dim}

\begin{proposition}\label{P:permissibletop} Let $(R,\m)$ be a complete local-filtered ring, and let $M$ be a permissible filtered $R$-space. Then, the topology on  $M$ is induced by the filtration $(\m^iM\mid i=0,1,\dots)$ of subgroups.
\end{proposition}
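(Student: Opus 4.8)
The goal is to show that the given filtration $(F^i(M))$ and the $\m$-adic filtration $(\m^iM)$ define the same topology on $M$. By Theorem~\ref{T:CLF}\ref{affine3} we have $\m^n = F^n(R)$ for all $n$, so $\m^nM = F^n(R)F^0(M)$, and these subgroups are contained in $F^n(M)$ (since $F^n(R)F^0(M)\subseteq F^n(M)$ by the axioms of a filtered space). Hence every $F^n(M)$-neighbourhood of $0$ contains an $\m^nM$-neighbourhood, so the $\m$-adic topology is finer than (or equal to) the filtration topology. The substance of the proposition is the reverse inclusion: one must find, for each $n$, an integer $N$ with $F^N(M)\subseteq \m^nM$.

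The plan is to invoke the Artin--Rees lemma (Theorem~\ref{T:AR3}), whose hypotheses hold precisely because $M$ is permissible. Applying it gives an integer $D\in\N_0$ such that $F^{n+d}(M) = F^n(R)F^d(M)$ for all $n\in\N_0$ and all $d\geq D$. Now I would specialise: fix $d = D$, so that for every $n$,
\[
F^{n+D}(M) = F^n(R)F^D(M) = \m^n\, F^D(M) \subseteq \m^n F^0(M) = \m^n M,
\]
using $\m^n = F^n(R)$ and $F^D(M)\subseteq F^0(M) = M$. Thus taking $N = n+D$ shows $F^N(M)\subseteq \m^n M$, which is exactly the missing inclusion of neighbourhood bases. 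Combining the two directions, the two filtrations are cofinal in one another, hence define the same topology; this is the assertion of the proposition.

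I do not expect any genuine obstacle here: the proposition is essentially a corollary of Theorem~\ref{T:AR3} together with the identification $F^i(R)=\m^i$ from Theorem~\ref{T:CLF}. The only point requiring a moment's care is checking that $(\m^iM)$ really is a (decreasing, exhaustive) filtration of subgroups defining a group topology — but this is immediate since each $\m^i = F^i(R)$ is an ideal, $\m^{i+1}M\subseteq \m^i M$, and $\m^0 M = M$ — and that both filtrations consist of subgroups so that "inducing the same topology" is equivalent to the two families being mutually cofinal. One might also remark, as a sanity check, that when $M$ is itself given its $\m$-adic filtration the statement is trivial, and that the content is that any \emph{other} permissible filtration produces the same topology; this is worth a sentence but needs no extra argument beyond the displayed chain of inclusions.
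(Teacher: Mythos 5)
Your proposal is correct and follows essentially the same route as the paper: both apply Theorem~\ref{T:AR3} to get a single $d$ with $F^{n+d}(M)=F^n(R)F^d(M)$, and then sandwich via $F^{n+d}(M)=F^n(R)F^d(M)\subseteq F^n(R)F^0(M)=\m^nM\subseteq F^n(M)$, using Theorem~\ref{T:CLF} to identify $\m^n$ with $F^n(R)$. The paper phrases the conclusion in terms of openness of the subgroups $F^n(R)M$ rather than mutual cofinality, but this is only a cosmetic difference.
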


\begin{proof} By Theorem \ref{T:AR3}, we can find a non-negative integer $d$ such that $F^{n+d}(M)=F^n(R)F^d(M)$ for all $n\in\N_0$. Consequently, we  have 
\begin{equation} \label{eq:pertop} F^{n+d}(R)F^0(M) \subseteq F^{n+d}(M)=F^n(R)F^d(M)\subseteq F^n(R) F^0(M)\subseteq F^n(M)\end{equation} for all $n\in\N_0$. Since the topology on each quotient $M/F^i(M)$ is discrete, the subgroup $F^i(R)M$ is open. The proposition then follows from Theorem \ref{T:CLF} and the inclusions \eqref{eq:pertop} above. \end{proof}

\begin{proposition} \label{P:kdim} Let $R$ be a  complete local-filtered ring,  and let  $M$ be an $R$-space. If $(G^i(M)\mid i=0,1,\dots)$ and $(H^i(M)\mid  i=0,1,\dots)$ are two permissible filtrations on $M$, then the $\gr(R)$-modules 
\[ \frac{G^0(M)}{G^1(M)}\oplus\frac{G^1(M)}{G^2(M)}\oplus \dots\quad \text{and}\quad \frac{H^0(M)}{H^1(M)}\oplus\frac{H^1(M)}{H^2(M)}\oplus \dots\]
have the same Krull dimension. In other words, the dimension of a permissible space is independent of the choice of permissible filtration.\end{proposition}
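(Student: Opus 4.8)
The plan is to show that the two permissible filtrations define the same topology on $M$, and then reduce the equality of Krull dimensions to a statement about Hilbert--Samuel-type growth which is visibly topological.

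First I would invoke Proposition \ref{P:permissibletop}: since both $(G^i(M))$ and $(H^i(M))$ are permissible filtrations on $M$, each induces the \emph{same} topology on $M$, namely the $\m$-adic one coming from the filtration $(\m^iM)$. In particular the two filtrations are cofinal with each other: there exist integers $c,c'\in\N_0$ such that $G^{i+c}(M)\subseteq H^i(M)$ and $H^{i+c'}(M)\subseteq G^i(M)$ for all $i$ (this is exactly the content of having the same basis of open neighbourhoods of $0$, bounded below by powers of $\m$ via Theorem \ref{T:AR3} and Theorem \ref{T:CLF}).

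Next I would extract the Krull dimension from the filtration in a way that only depends on the cofinality class. Write $\pmb{k}=R/\m$, let $q_n(G):=\dim_{\pmb k} M/G^n(M)$ (with the convention that this may be $+\infty$; but permissibility plus $\gr(R)$ affine forces each $M/G^n(M)$ to be finite-dimensional over $\pmb k$, since $\gr(M)$ is a finitely generated graded module over the affine $\pmb k$-algebra $\gr(R)$ and hence each graded piece $G^i(M)/G^{i+1}(M)$ is finite-dimensional). By the Hilbert--Serre theorem applied to the finitely generated graded $\gr(R)$-module $\gr_G(M):=\bigoplus_i G^i(M)/G^{i+1}(M)$, the function $n\mapsto \dim_{\pmb k}(G^n(M)/G^{n+1}(M))$ agrees for large $n$ with a polynomial of degree $\dim(\gr_G(M))-1$, hence $q_n(G)$ agrees for large $n$ with a polynomial of degree $\dim(\gr_G(M))$. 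The same holds for $H$ with degree $\dim(\gr_H(M))$. Now the cofinality inclusions $G^{i+c}(M)\subseteq H^i(M)\subseteq G^i(M)$ give $q_i(G)\le q_i(H)\le q_{i+c}(G)$ for all $i$, so the polynomials approximating $q_n(G)$ and $q_n(H)$ are trapped between each other up to a bounded shift of the argument; two eventually-polynomial functions satisfying such a two-sided sandwich must have equal degree. Therefore $\dim(\gr_G(M))=\dim(\gr_H(M))$, which is the assertion.

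The main obstacle I expect is justifying the finiteness $\dim_{\pmb k} M/G^n(M)<\infty$ and the eventual-polynomial behaviour cleanly in this nonassociative-but-commutative-graded setting. The point is that $\gr(R)$ is a genuine finitely generated \emph{commutative} (associative) $\pmb k$-algebra and $\gr_G(M)$ is a genuine finitely generated graded \emph{module} over it (this is exactly the permissibility hypothesis, Definition \ref{D:permissible}), so the entire Hilbert--Serre/Hilbert polynomial machinery is available verbatim — nothing nonassociative enters at this stage. The only care needed is that the reduction of the original problem to this purely commutative-algebra statement is legitimate, i.e. that "same topology" genuinely delivers the two-sided cofinality bounds by powers of a single filtration; that is supplied by Proposition \ref{P:permissibletop} together with Theorem \ref{T:AR3}. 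Once those are in hand, the sandwich argument on eventually-polynomial functions finishes the proof with no further subtlety.
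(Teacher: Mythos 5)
Your proof is correct, but it takes a genuinely different route from the paper's. The paper forms the intersection filtration $F^i(M)=G^i(M)\cap H^i(M)$, realises it as the filtration induced on the diagonal of the permissible product $(M\times M,\;G^i(M)\times H^i(M))$ to see that it is itself permissible, and then compares lengths $\ell(M/F^i(M))$, $\ell(M/G^i(M))$, $\ell(M/H^i(M))$ via their Hilbert--Samuel polynomials; notably this needs no Artin--Rees input, only Note \ref{N:induced} and the exactness of $\gr$ for induced filtrations. You instead invoke Theorem \ref{T:AR3} and Theorem \ref{T:CLF} (through Proposition \ref{P:permissibletop}) to obtain mutual cofinality of the two filtrations with the $\m$-adic one, and then sandwich the two Hilbert--Samuel functions up to a bounded shift of the argument; this is heavier machinery but arguably more conceptual, and it additionally shows the two Hilbert--Samuel polynomials have the same leading coefficient, not just the same degree (a fact the paper only extracts later, in Theorem \ref{T:asymptotic}). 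One small correction: your displayed chain $G^{i+c}(M)\subseteq H^i(M)\subseteq G^i(M)$ is wrong as written, since neither filtration need refine the other termwise; the correct two-sided cofinality you stated earlier gives $G^{i+c}(M)\subseteq H^i(M)$ and $H^{i+c'}(M)\subseteq G^i(M)$, hence $q_i(H)\le q_{i+c}(G)$ and $q_i(G)\le q_{i+c'}(H)$, which is exactly the ``bounded shift'' sandwich your degree comparison actually uses, so the conclusion stands. (Also, $\dim_{\pmb k}M/G^n(M)$ should strictly be read as the length $\ell(M/G^n(M))$, i.e.\ the sum of the dimensions of the graded pieces, since $M/G^n(M)$ is not itself a $\pmb k$-vector space.)
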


 The justification makes use of the Hilbert--Samuel polynomial of the associated graded module, and we recall this now.  Let $R$ be complete local-filtered ring, and let $M$ be an $R$-space. We write  $\ell(M)$ for the  length of the $R$-space $M$. Thus $\ell(M)=n$ if there is a saturated chain
\[ 0=M_0\subsetneq M_1\subsetneq \dots \subsetneq M_n=M\]
of $R$-subspaces; if no such chain exists, then $\ell(M)=\infty$.  The length of an $R$-space is well defined since, as indicated in Section \ref{ss:gen},  $R$-spaces are  abelian groups with operators in the set $R$ (see \cite[Ch. 3, \S 3.3]{jacobson2}).

Let $\pmb{k}$ denote the residue field of $R$, and  let  $M$ be a permissible $R$-space. Then, for any integer $n\in\N_0$,  we have
\[
\ell\big(M/F^n(M)\big)
=\sum_{k=1}^n\dim_{\;\pmb{k}}F^{k-1}(M)/F^k(M).\]If  $\mathcal{P}(x)$ is the Hilbert--Samuel polynomial of the $\gr(R)$-module $\gr(M)$, then the degree of $\mathcal{P}$ is the Krull dimension of $\gr(M)$ and $\ell\big(M/F^n(M)\big)=\mathcal{P}(n)$ for all but finitely many $n\in \N_0$. (See \S 13 of \cite{mats}.) We will often refer to $\mathcal{P}(x)$ as the Hilbert--Samuel polynomial of the (permissible) filtration $(F^*(M))$.

\begin{note}\label{N:induced} Let $0\to L\to M\to N\to 0$ be a short exact sequence of $R$-spaces. We view $L$ as a subspace and $N$ as the quotient $M/L$. Let $(F^*(M))$ be a permissible filtration on $M$. Then, with the induced filtrations (see \ref{fab:subgroup}) on  $L$ and $N$, the sequence $0\to\gr(L)\to \gr(M)\to\gr(N)\to 0$ is an exact sequence of graded spaces: that is, we have
\begin{equation}\label{eq:sesexample}
0\to \frac{F^i(L)}{F^{i+1}(L)}\to \frac{F^i(M)}{F^{i+1}(M)}\to \frac{F^i(N)}{F^{i+1}(N)}\to 0\end{equation}
for all $i\in \N_0$. Thus the induced filtration on $L$ is permissible. If $L$ is topologically closed, then the quotient $M/L$ is also permissible with the induced filtration.
\end{note}

\begin{proof}[Proof of Proposition \ref{P:kdim}] We begin by setting out some notation. If $L$ is a permissible $R$-space with filtration $(\Lambda^*(L))$, then we denote its Krull dimension  by  $\dim(\Lambda^*(L))$.

Consider the filtrations on $M$  and $M\times M$ given by  
\[
F^i(M)= G^i(M)\cap H^i(M) \quad \text{and}\quad F^i(M\times M)=G^i(M)\times H^i(M),\]
respectively. Note that the product  $M\times M$ 
is a permissible filtered module with respect to the filtration 
$(F^*(M\times M))$.  Using the diagonal embedding $m\to (m,m)$ to identify $M$ with the diagonal of $M\times M$, we see that $(F^*(M))$ is the induced filtration of $(F^*(M\times M))$ on the diagonal, and therefore $(F^*(M))$ is a permissible filtration on $M$. 

Now, using the exact sequence \eqref{eq:sesexample}, we have
\[\ell(M/F^i(M))\leq \ell\big((M\times M)/F^i(M\times M)\big)=\ell\big(M/G^i(M)\big)+\ell\big(M/H^i(M)\big).\]
By considering  associated Hilbert--Samuel polynomials, we deduce that 
\begin{equation}\label{eq:kdim1}
\dim\big(F^*(M)\big)\leq \max\big\{\dim\big(G^*(M)\big),\, \dim\big(H^*(M)\big)\,\}.\end{equation}

For inequalities in the other direction, note that  the inclusions $F^i(M)\subseteq G^i(M)$ and $F^i(M)\subseteq H^i(M)$ imply that
\[ \ell (M/G^i(M))\leq \ell(M/F^i(M))\quad \text{and}\quad \ell (M/G^i(M))\leq \ell(M/F^i(M)).\]
Hence, from the associated Hilbert--Samuel polynomials, we see that
\begin{equation}\label{eq:kdim2}
\dim (G^*(M))\leq \dim (F^*(M))\quad \text{and}\quad \dim (H^*(M))\leq \dim (F^*(M)).\end{equation}
Combining the displayed inequalities \eqref{eq:kdim1} and \eqref{eq:kdim2} above, we obtain
\[\dim (F^*(M))=\dim (G^*(M))=\dim (H^*(M)),\]
and this completes the proof of Proposition \ref{P:kdim}.\end{proof}

The following corollary is now immediate from Note \ref{N:induced} and the result that the dimension is independent of  choice of permissible filtration.
\begin{corollary}\label{C:dimses} Let $R$ be a complete local-filtered ring,  let $M$ be a permissible $R$-space, and let  $L\subseteq M$ be an $R$-subspace of $M$.  Then $L$ is a permissible space and $\dim(L)\leq \dim(M)$.  Furthermore, if $L$ is (topologically) closed, then $M/L$ is a permissible space and $\dim(M)=\max\{\,\dim(L),\dim(M/L)\,\}$.
\end{corollary}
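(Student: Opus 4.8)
The plan is to derive Corollary~\ref{C:dimses} directly from Note~\ref{N:induced} together with Proposition~\ref{P:kdim}, so that the only real content is bookkeeping about filtrations. First I would equip $M$ with a permissible filtration $(F^*(M))$ (one exists by definition of permissibility) and give $L$ the induced filtration $F^i(L)=L\cap F^i(M)$. By Note~\ref{N:induced}, the short exact sequence of graded pieces \eqref{eq:sesexample} holds, so $\gr(L)$ is a $\gr(R)$-submodule of $\gr(M)$; since $\gr(R)$ is Noetherian (it is an affine $\pmb k$-algebra, hence Noetherian, cf.\ Theorem~\ref{T:CLF}) and $\gr(M)$ is a finitely generated $\gr(R)$-module, $\gr(L)$ is also finitely generated. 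Hence the induced filtration on $L$ is permissible, and $L$ is a permissible $R$-space. By Proposition~\ref{P:kdim}, $\dim(L)$ does not depend on this choice of permissible filtration, so it is well defined.

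Next I would compare dimensions. With the filtrations above, $\gr(L)\hookrightarrow\gr(M)$ as $\gr(R)$-modules, and the Krull dimension of a submodule of a finitely generated module over a Noetherian ring is at most that of the module; this gives $\dim(L)\le\dim(M)$. Here I am using that $\dim(M)$ computed via the chosen filtration $(F^*(M))$ agrees with $\dim(M)$ in general (Proposition~\ref{P:kdim}), and likewise for $L$.

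For the closed case, suppose $L$ is topologically closed in $M$. Then, by the last sentence of Note~\ref{N:induced}, the quotient $M/L$ with the induced filtration $F^i(M/L)=(F^i(M)+L)/L$ is permissible, so $\dim(M/L)$ is well defined. Moreover, for each $i$ the sequence \eqref{eq:sesexample} is short exact, so taking associated graded modules we get a short exact sequence $0\to\gr(L)\to\gr(M)\to\gr(M/L)\to 0$ of finitely generated $\gr(R)$-modules. The Krull dimension of the middle term of such a short exact sequence is the maximum of the dimensions of the outer terms (a standard fact for finitely generated modules over a Noetherian ring, e.g.\ via supports or Hilbert--Samuel polynomials: $\ell(M/F^i(M))=\ell(L/F^i(L))+\ell\big((M/L)/F^i(M/L)\big)$ for all $i$, so the Hilbert--Samuel polynomial of $\gr(M)$ is the sum of those of $\gr(L)$ and $\gr(M/L)$, forcing the degree identity). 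Invoking Proposition~\ref{P:kdim} once more to pass from the chosen filtrations back to the intrinsic dimensions, we conclude $\dim(M)=\max\{\dim(L),\dim(M/L)\}$.

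The only mild subtlety — and the step I would be most careful about — is the interplay between the \emph{intrinsic} associated dimension (defined via any permissible filtration) and the dimensions read off from the \emph{specific} filtration $(F^*(M))$ used to realize the short exact sequence of graded modules: one must cite Proposition~\ref{P:kdim} at each point where a dimension is computed from a particular filtration. Beyond that, everything reduces to elementary commutative algebra over the Noetherian ring $\gr(R)$ (behavior of Krull dimension in short exact sequences of finitely generated modules), and to Note~\ref{N:induced} for the permissibility of $L$ and of $M/L$ when $L$ is closed.
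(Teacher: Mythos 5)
Your proposal is correct and takes essentially the same route as the paper: the paper derives the corollary directly from Note \ref{N:induced} (permissibility of the induced filtrations on $L$ and, when $L$ is closed, on $M/L$) together with Proposition \ref{P:kdim} (independence of the associated dimension from the choice of permissible filtration), treating the remaining commutative-algebra facts about Krull dimension in short exact sequences of finitely generated $\gr(R)$-modules as immediate. Your write-up simply makes those implicit steps explicit, and does so correctly.
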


\subsection{Size of permissible spaces}

We will now specialise to the case of finite residue field,  and   prove the following result on the asymptotic size of spaces.

\begin{theorem}\label{T:asymptotic} Let $(R,\m)$ be a complete local-filtered ring with finite residue field,  let $M$ be a permissible $R$-space, and set 
\[
q:=|\, R/\m\, |,\quad \delta:=\dim(M).\]
Also, choose a permissible filtration  $(F^*(M))$  on $M$, and let $\alpha\in \Q$ be the leading coefficient of the Hilbert--Samuel polynomial of $(F^*(M))$. Then 
\begin{equation}\label{eq:limit}
\lim_{n\to\infty}\frac{\log_{q}\left\vert M/\m^nM\right\vert }{n^\delta}=\alpha.\end{equation} Consequently, the Hilbert--Samuel polynomial of any permissible filtration on $M$ is of the form $\alpha x^\delta +\text{lower degree terms}$.
\end{theorem}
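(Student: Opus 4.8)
The plan is to reduce the asymptotic count of $M/\m^nM$ to the length computation for a fixed permissible filtration, using the Artin--Rees lemma to compare the $\m$-adic filtration with the chosen one. First I would invoke Theorem~\ref{T:AR3} to obtain an integer $D$ with $F^{n+d}(M)=F^n(R)F^d(M)$ for all $n\in\N_0$ and $d\geq D$; combined with Theorem~\ref{T:CLF}\ref{affine3}, this gives the two-sided containment displayed in \eqref{eq:pertop}, namely
\[
F^{n+D}(M)\subseteq \m^n F^D(M)\subseteq \m^n M \subseteq F^n(M)
\]
for all $n$. Since $F^D(M)$ has finite colength in $M=F^0(M)$ (each successive quotient $F^i(M)/F^{i+1}(M)$ is a finite-dimensional $\pmb{k}$-vector space, hence finite), the group $\m^n M$ is sandwiched between two subgroups whose colengths differ by a bounded amount plus a shift of index. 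Quantitatively, $\ell(M/\m^nM)$ lies between $\ell(M/F^n(M))$ and $\ell(M/F^{n+D}(M))$, and since $R/\m$ is finite of cardinality $q$, length equals $\log_q$ of cardinality: $\log_q|M/\m^nM|$ is trapped between $\ell(M/F^n(M))$ and $\ell(M/F^{n+D}(M))$.

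Next I would bring in the Hilbert--Samuel polynomial. By the discussion preceding Note~\ref{N:induced}, $\ell(M/F^n(M))=\mathcal{P}(n)$ for all large $n$, where $\mathcal{P}(x)$ is the Hilbert--Samuel polynomial of the graded module $\gr(M)$, of degree $\delta=\dim(M)$ and leading coefficient $\alpha$. Hence $\ell(M/F^n(M))\sim \alpha n^\delta$ and likewise $\ell(M/F^{n+D}(M))=\mathcal{P}(n+D)\sim \alpha n^\delta$, because shifting the argument by the constant $D$ does not change the leading term asymptotics. The squeeze then forces
\[
\lim_{n\to\infty}\frac{\log_q|M/\m^nM|}{n^\delta}=\alpha,
\]
which is exactly \eqref{eq:limit}.

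For the final sentence, I would argue that the left-hand side of \eqref{eq:limit} is manifestly independent of the chosen permissible filtration $(F^*(M))$ --- it only refers to the intrinsic groups $M/\m^nM$. Therefore the pair $(\delta,\alpha)$, being determined by that limit (with $\delta$ the unique exponent making the limit a finite nonzero number, or handling the $M=0$ degenerate case separately), is the same for every permissible filtration; in particular $\dim(M)$ and the leading coefficient of the Hilbert--Samuel polynomial are filtration-independent, recovering and refining Proposition~\ref{P:kdim}. Since by definition the Hilbert--Samuel polynomial of any permissible filtration has degree $\dim(\gr(M))=\delta$ and its leading coefficient is the $\alpha$ appearing in the limit, every such polynomial is of the form $\alpha x^\delta+\text{lower degree terms}$.

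The main obstacle I anticipate is bookkeeping at the boundary between the $\m$-adic filtration and the permissible one: one must be careful that $\m^n M$ genuinely sits inside $F^n(M)$ and contains $F^{n+D}(M)$ for \emph{all} $n$ (not just large $n$), that the relevant lengths are finite so that the squeeze is meaningful, and that $\log_q|{-}|$ really coincides with the length function $\ell$ on finite $R$-spaces --- this last point rests on each composition factor being a one-dimensional $\pmb{k}$-vector space of cardinality $q$, which uses $R/\m$ finite. Once these identifications are in place, the asymptotic argument via the Hilbert--Samuel polynomial is routine.
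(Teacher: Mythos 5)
Your proposal is correct and follows essentially the same route as the paper: sandwich $\m^nM$ between $F^{n+D}(M)$ and $F^n(M)$ via the Artin--Rees lemma (Theorem \ref{T:AR3}) together with Theorem \ref{T:CLF}, identify $\log_q$ of cardinality with length, and squeeze using the Hilbert--Samuel polynomial $\alpha x^\delta+Q(x)$. The closing observation that the limit is intrinsic to $M$ and hence pins down $(\delta,\alpha)$ for every permissible filtration is exactly how the paper deduces the final sentence.
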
 
\begin{proof}
Using Theorem \ref{T:AR3} and arguing as in the displayed inclusions \eqref{eq:pertop}, we can find a non-negative integer $d$ such that
$F^{n+d}(M)\subseteq \m^nM\subseteq F^n(M) $ for all $n\in \N_0$. The cardinalities of the quotients then satisfy
\begin{equation} \label{eq:cards}\log_q\left\vert M/F^n(M)\right\vert \leq \log_{q}\left\vert M/\m^nM\right\vert \leq \log_q\left\vert M/F^{n+d}(M)\right\vert .\end{equation}
Let $\alpha x^\delta +Q(x)$ be the Hilbert--Samuel polynomial associated to the filtration $(F^*(M))$. Thus $Q(x)$ has degree at most $\delta-1$, and $\alpha$ is a positive rational number. Since 
$\log_q\left\vert M/F^i(M)\right\vert$ is the length of the $R$-space $M/F^i(M)$, the displayed inequalities \eqref{eq:cards} imply that
\[
\alpha n^\delta + Q(n)\leq \log_q\left\vert M/\m^nM\right\vert \leq \alpha(n+d)^\delta +Q(n+d) \] for sufficiently large $n$. The theorem is now immediate.\end{proof}

\subsection{Torsion elements in permissible spaces} \label{ss:torsion}
The preceding discussion can be applied to identify classes of torsion elements in permissible spaces over a complete local-filtered ring with finite residue field. The class of elements which we consider  is covered by the following definition.
 
 \begin{definition}\label{D:special elements}
Let $(R,\m)$ be a complete local-filtered ring, and let $M$ be an $R$-space. 
 An element $x\in M$ is said to be \emph{ $R$-distinguished}, or simply \emph{distinguished},  if $\m^i(Rx)=\m^ix$ for all integers $i\in \N_0$. If the stronger condition $\m^i(\m^jx)=\m^{i+j}x$ holds for all $i,j\in \N_0$, then we say that $x$ is \emph{$\m$-adically distinguished}.
\end{definition}

\begin{example}\label{E:nucleus} Let $(R,\m)$ be a complete local-filtered ring and let $M$ be an $R$-space. 
Then any element of $M$ on which $R$ operates associatively is $R$-distinguished. More precisely, if  $m\in M$ and  $r(sm)=(rs)m$ for all $r,s\in R$, then $m$ is $\m$-adically distinguished. In particular, if $R$ is associative and $M$ is an $R$-module, then every element of $M$ is $\m$-adically distinguished.
\end{example}

\begin{note}\label{N:DE} Let $(R,\m)$ be a complete local-filtered ring and let $M$ be an $R$-space. Then the following holds: \emph{An element $x\in M$ is $\m$-adically distinguished if and only if $Rx$ has a permissible filtration with cyclic associated gradation. }

The forward implication holds by definition. Conversely, if $Rx$ has a permissible filtration with cyclic associated gradation, then we must have $\gr(Rx)=\gr(R)\sigma(x)$ 
where $\sigma(x)$ is the principal part of $x$. Hence    $x$ is $\m$-adically distinguished (by  Example \ref{E:CS} \ref{CS2}).
\end{note}

\begin{theorem}\label{T:torsion}
Let $(R,\m)$ be a complete local-filtered ring with finite residue field and let $M$ be a permissible $R$-space. Assume that $\gr(R)$ is an integral domain, and that $M$ is  spanned by $\m$-adically distinguished elements with non-zero annihilators.  Then the following statements are equivalent.
\begin{enumerate}[label=(S\arabic*),   ]
\item \label{torsion1} $\dim(M)\leq \dim(R)-1$.
\item \label{torsion2} Every distinguished element of $M$ has a non-zero annihilator.
\item \label{torsion3} $M$ is  spanned by $\m$-adically distinguished elements with non-zero annihilators.
\end{enumerate}
\end{theorem}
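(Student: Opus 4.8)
The plan is to prove the cycle of implications \ref{torsion3}$\Rightarrow$\ref{torsion2}$\Rightarrow$\ref{torsion1}$\Rightarrow$\ref{torsion3}, using the hypothesis (that $M$ is spanned by $\m$-adically distinguished elements with non-zero annihilators) to get \ref{torsion3} for free at the end. The implication \ref{torsion3}$\Rightarrow$\ref{torsion2} is the substantive step and I expect it to be the main obstacle; the other two should be short.

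\textbf{Step 1: \ref{torsion3}$\Rightarrow$\ref{torsion1}.} Suppose $M=Rx_1+\dots+Rx_r$ with each $x_i$ being $\m$-adically distinguished and having a non-zero annihilator $0\neq a_i\in R$ with $a_ix_i=0$. By Note \ref{N:DE}, each $Rx_i$ carries a permissible filtration with cyclic associated gradation; by Example \ref{E:CS}, that filtration is $F^n(Rx_i)=\m^nx_i$, so $\gr(Rx_i)\cong\gr(R)/\mathfrak{a}_i$ where $\mathfrak{a}_i$ is the annihilator of $\sigma(x_i)$ in $\gr(R)$. Now $\sigma(a_i)\sigma(x_i)$: if this is non-zero, then by Lemma \ref{L:val} we would have $v(a_ix_i)<\infty$, contradicting $a_ix_i=0$; hence $\sigma(a_i)\sigma(x_i)=0$, so $\mathfrak{a}_i\neq 0$. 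Since $\gr(R)$ is a domain, $\dim(\gr(R)/\mathfrak{a}_i)\leq\dim(\gr(R))-1$, i.e.\ $\dim(Rx_i)\leq\dim(R)-1$. The surjection $\bigoplus_i Rx_i\twoheadrightarrow M$ and repeated use of Corollary \ref{C:dimses} (together with the fact, from Corollary \ref{C:dimses}, that $\dim$ is subadditive along such presentations after passing to closed subspaces) give $\dim(M)\leq\max_i\dim(Rx_i)\leq\dim(R)-1$. One technical point to handle here: the $Rx_i$ need not be closed in $M$, but we may instead filter $M$ by the subspaces $Rx_1+\dots+Rx_j$ and take closures, or simply argue on Hilbert--Samuel polynomials via the surjection, since $\ell(M/\m^nM)\leq\sum_i\ell(Rx_i/\m^n x_i)$ once we fix compatible filtrations.

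\textbf{Step 2: \ref{torsion1}$\Rightarrow$\ref{torsion2}.} Let $x\in M$ be distinguished, so $\m^i(Rx)=\m^ix$ for all $i$; set $N:=Rx$, filtered by $F^n(N):=\m^nx$. This is an exhaustive decreasing filtration of $R$-subspaces; I claim it is permissible. It is separated because $N$ is a subspace of the separated space $M$ and (using Proposition \ref{P:permissibletop}) the $\m$-adic filtration on $M$ induces the topology, so $\bigcap\m^nx\subseteq\bigcap(\m^nM\cap N)=(0)$; actually the cleanest route is: the surjection $R\to N$, $r\mapsto rx$, sends $\m^n$ onto $\m^n x=F^n(N)$, so $\gr(N)$ is a quotient of $\gr(R)$ as a $\gr(R)$-module, hence finitely generated, hence $\gr(N)$ is a cyclic $\gr(R)$-module and $N$ is permissible. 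By Corollary \ref{C:dimses}, $\dim(N)\leq\dim(M)\leq\dim(R)-1<\dim(R)$. But $\gr(N)\cong\gr(R)/\mathfrak{b}$ for the annihilator ideal $\mathfrak{b}$ of $\sigma(x)$, and $\dim(\gr(R)/\mathfrak{b})<\dim\gr(R)$ forces $\mathfrak{b}\neq 0$ (as $\gr(R)$ is a domain). Pick a non-zero homogeneous $\bar r\in\mathfrak{b}$, lift to $r\in R$ with $\sigma(r)=\bar r$; then $\sigma(r)\sigma(x)=0$ in $\gr(N)$, which by Lemma \ref{L:val} (applied in $N$) means $v(rx)>v(r)+v(x)$, i.e.\ $rx\in F^{v(r)+v(x)+1}(N)=\m^{v(r)+v(x)+1}x$. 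This only says $rx$ has strictly larger valuation, not that it is zero, so a single element is not enough: I must iterate. This is the heart of the argument, and the expected obstacle.

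\textbf{Step 2, continued (the iteration).} The idea is to build an annihilator of $x$ as a convergent series using Lemma \ref{L:filabmain}, exactly as in the proofs of Theorem \ref{T:CLF} and Lemma \ref{L:lifting generators}. Consider the continuous additive map $\psi\colon R\to N$, $\psi(r)=rx$; note $v(rx)\geq v(r)+v(x)$ always, and $\psi$ is surjective (it is how $N$ is defined). Now $\ker\psi=\ann_R(x)$, and I want to show $\ker\psi\neq(0)$. Equivalently: $\psi$ is \emph{not} injective. If $\psi$ were injective it would be an additive bijection $R\to N$ which is moreover \emph{filtered}: $\psi(F^{n-v(x)}(R))=\m^{n-v(x)}x=F^n(N)$ for $n\geq v(x)$. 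Passing to associated gradeds, $\psi$ would induce, in each degree $n\geq v(x)$, an isomorphism $F^{n-v(x)}(R)/F^{n-v(x)+1}(R)\xrightarrow{\sim}F^n(N)/F^{n+1}(N)$ of $\pmb k$-vector spaces compatible with the $\gr(R)$-action, i.e.\ $\gr(N)\cong\gr(R)$ as $\gr(R)$-modules (up to the degree shift by $v(x)$). But then $\dim(N)=\dim(\gr(R))=\dim(R)$, contradicting $\dim(N)<\dim(R)$ established above. Hence $\psi$ is not injective and $x$ has a non-zero annihilator. (The degree shift is harmless since $\gr(R)$ is generated in degree $1$, so a shift of the module does not change Krull dimension; alternatively just compare Hilbert--Samuel polynomials degree-by-degree, which have equal degree iff the spaces are ``the same size'' asymptotically.) I expect the care needed in making ``$\psi$ filtered and bijective $\Rightarrow$ $\gr$-iso'' precise — particularly checking that the induced map on graded pieces is genuinely $\gr(R)$-linear, which uses that $x$ is $R$-distinguished so that $\m^i(\m^jx)$ behaves — to be the fiddly part. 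An alternative, perhaps cleaner, phrasing: the exact sequence $0\to\ann_R(x)\to R\xrightarrow{\psi}N\to 0$ with $\ann_R(x)$ given the subspace filtration; if $\ann_R(x)=(0)$ then $\gr(\ann_R(x))=(0)$ and the sequence $0\to\gr(\ann_R(x))\to\gr(R)\to\gr(N)\to 0$ (exactness from Note \ref{N:induced}, once we know $\ann_R(x)$ is closed — which is automatic if it is $(0)$, or follows since annihilators are closed by continuity of multiplication) gives $\gr(R)\cong\gr(N)$, hence equal dimension, contradiction. I would likely present it this way.

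\textbf{Step 3: \ref{torsion2}$\Rightarrow$\ref{torsion3}.} This is immediate: by hypothesis $M$ is spanned by $\m$-adically distinguished elements $\{x_i\}$; an $\m$-adically distinguished element is in particular distinguished (taking $j=0$, or rather $j$ arbitrary implies the $j=0$ case $\m^i(Rx)=\m^i x$ via $Rx=\bigcup\m^i(\cdots)$ — more carefully, $\m$-adically distinguished $\Rightarrow$ by Note \ref{N:DE} that $Rx$ has cyclic associated gradation, and then Example \ref{E:CS}\ref{CS1} gives $F^n(Rx)=\m^n x$, which is exactly the distinguished condition). By \ref{torsion2}, each such $x_i$ has a non-zero annihilator. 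Hence $M$ is spanned by $\m$-adically distinguished elements with non-zero annihilators, which is \ref{torsion3}. This closes the cycle and completes the proof.
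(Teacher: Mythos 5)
Your overall architecture --- the cycle \ref{torsion1}$\Rightarrow$\ref{torsion2}$\Rightarrow$\ref{torsion3}$\Rightarrow$\ref{torsion1}, with \ref{torsion2}$\Rightarrow$\ref{torsion3} immediate from the hypothesis and \ref{torsion3}$\Rightarrow$\ref{torsion1} argued by killing $\gr(Rx_i)$ with $\sigma(a_i)$ and using that $\gr(R)$ is a domain --- coincides with the paper's (Lemmas \ref{L:torsion converse} and \ref{L:torsion}); your Steps 1 and 3 are essentially the paper's arguments. The gap is in Step 2, the implication \ref{torsion1}$\Rightarrow$\ref{torsion2}. For a merely \emph{distinguished} $x$ (which is all \ref{torsion2} concerns), the only available information is $\m^i(Rx)=\m^i x$. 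In a nonassociative ring the map $\psi\colon r\mapsto rx$ is \emph{not} a morphism of $R$-spaces, since $(sr)x\neq s(rx)$ in general; consequently $\ann_R(x)=\ker\psi$ need not be a left ideal, $\m^j x$ need not be an $R$-subspace, and the inclusion $\m^i(\m^jx)\subseteq\m^{i+j}x$ --- which you would need for $(\m^n x)$ to be a filtration of the $R$-space $Rx$ compatible with the ring filtration --- is precisely the \emph{$\m$-adically} distinguished condition, not the distinguished one (your aside that ``$R$-distinguished'' controls $\m^i(\m^jx)$ is where the slip occurs). So neither the ``filtered bijection $\Rightarrow$ $\gr(R)$-module isomorphism'' claim nor the exact-sequence variant via Note \ref{N:induced} goes through: you cannot identify $\gr(Rx)$ with $\gr(R)$ \emph{as a $\gr(R)$-module}, and since $(\m^nx)$ is not known to be a permissible filtration, Proposition \ref{P:kdim} does not let you read $\dim(Rx)$ off it. A telling symptom is that your argument for this step never invokes the finite residue field.

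The repair is the paper's: downgrade from graded modules to raw cardinalities. If $\ann_R(x)=0$ then $\psi$ is an additive bijection carrying $\m^n$ onto $\m^n(Rx)$ (by distinguishedness), so $\vert R/\m^n\vert=\vert Rx/\m^n(Rx)\vert$ for all $n$. Since $Rx$ is permissible with the filtration induced from $M$ (Corollary \ref{C:dimses}) and the residue field is finite, Theorem \ref{T:asymptotic} gives $\log_q\vert Rx/\m^n(Rx)\vert\sim\alpha n^{\dim(Rx)}$ and $\log_q\vert R/\m^n\vert\sim\beta n^{\dim(R)}$, forcing $\dim(Rx)=\dim(R)$ and contradicting \ref{torsion1}. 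This is exactly where the finite-residue-field hypothesis earns its keep, and it bypasses all the module-theoretic obstructions above.
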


\begin{remark}  Theorem \ref{T:torsion}   implies the following in an associative setting.  
\begin{enumerate}[wide=0pt]
\item[]\emph{Suppose $(R,\m)$ is an associative complete local Noetherian  ring with finite residue field and associated graded ring a commutative domain, and let $M$ be finitely generated  $R$-module. We assume that $M=Rx_1+\dots +Rx_n$ with each $x_i$ having non-zero annihilator. Then every element of $M$ has a non-zero annihilator}. \end{enumerate}
The conclusion is not obvious: there is no clear way of producing an annihilator of $x_1+x_2 $ out of relations $r_1x_1=r_2x_2=0$. \end{remark}

Returning to the proof of Theorem \ref{T:torsion}, we recall the following result from commutative algebra which plays  a significant role in the ensuing discussion. 
\begin{theorem}[height--dimension formula]\label{T:htdim} If $A$ is an affine domain and $I$ is an ideal of $A$, then   $\h(I)+\dim(A/I)=\dim(A)$.\end{theorem}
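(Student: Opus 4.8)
\textbf{Proof plan for Theorem \ref{T:htdim} (height--dimension formula).}

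The plan is to deduce this from the standard dimension theory of affine domains, in which two facts are available: first, that for an affine algebra $A$ over a field, $\dim(A)$ equals the transcendence degree of its fraction field over the base field when $A$ is a domain, and more importantly that $\dim(A)$ equals the common length of \emph{any} maximal chain of prime ideals (the catenary/equidimensionality property for affine domains); and second, that $\h(\p) + \dim(A/\p) = \dim(A)$ for every prime ideal $\p$ of an affine domain $A$. This last equality is the prime case of exactly what we want, and it is a classical consequence of the Noether normalisation lemma together with the going-up/going-down theorems; I would cite it from a standard reference (e.g. Matsumura \cite{mats}, or Eisenbud) rather than reprove it.

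First I would reduce from an arbitrary ideal $I$ to a prime ideal. Recall $\h(I) = \min\{\h(\p) : \p \supseteq I,\ \p \text{ prime}\}$ by definition of the height of an ideal, and $\dim(A/I) = \max\{\dim(A/\p) : \p \supseteq I,\ \p \text{ minimal over } I\}$, since the dimension of $A/I$ is realised along a chain starting at a minimal prime over $I$. The key observation is that a prime $\p$ of minimal height over $I$ is automatically a \emph{minimal} prime over $I$ (a smaller prime containing $I$ would have strictly smaller height), and conversely among the minimal primes over $I$, one achieves both the minimum height and — this is the point requiring the prime-case formula — simultaneously the maximum of $\dim(A/\p)$. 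Concretely: pick $\p$ minimal over $I$ with $\h(\p) = \h(I)$; then by the prime case $\dim(A/\p) = \dim(A) - \h(\p) = \dim(A) - \h(I)$, so $\dim(A/I) \geq \dim(A) - \h(I)$. For the reverse inequality, take any minimal prime $\q$ over $I$ realising $\dim(A/I) = \dim(A/\q)$; then $\h(I) \leq \h(\q) = \dim(A) - \dim(A/\q) = \dim(A) - \dim(A/I)$. Combining the two inequalities gives $\h(I) + \dim(A/I) = \dim(A)$.

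The main obstacle — really the only substantive input — is the prime case $\h(\p) + \dim(A/\p) = \dim(A)$, which genuinely uses that $A$ is an affine \emph{domain} (it fails for general Noetherian rings, and even for affine rings that are not equidimensional). Since the excerpt introduces Theorem \ref{T:htdim} explicitly as a ``result from commutative algebra'' to be invoked, the honest course here is to state it with a reference to \cite{mats}, sketch the reduction to the prime case as above, and not grind through Noether normalisation. If a self-contained argument were wanted, one would run: Noether normalisation gives a finite injection $k[y_1,\dots,y_d] \hookrightarrow A$ with $d = \dim A$; going-down (valid since $A$ is a domain, hence $k[\underline y] \hookrightarrow A$ has going-down) lets one compute $\h(\p)$ as the height of $\p \cap k[\underline y]$, which for polynomial rings is $d - \dim(k[\underline y]/(\p\cap k[\underline y])) = d - \dim(A/\p)$ by the finiteness of $A/\p$ over $k[\underline y]/(\p \cap k[\underline y])$. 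That chain of reductions is standard and I would simply point to it.
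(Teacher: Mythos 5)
Your proposal is correct and follows essentially the same route as the paper: both cite the prime-ideal case $\h(\p)+\dim(A/\p)=\dim(A)$ from a standard reference (the paper uses Matsumura, Ch.~5, 14.H) and then reduce the general ideal to a prime $\p\supseteq I$ with $\h(\p)=\h(I)$, obtaining the two inequalities whose combination gives equality. The only cosmetic difference is that you derive the inequality $\h(I)+\dim(A/I)\leq\dim(A)$ by applying the prime case to a minimal prime $\q$ realising $\dim(A/I)$, whereas the paper simply invokes it as a known general inequality.
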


\begin{proof} See \cite[Ch. 5, 14.H]{matsCA}. The formula stated there (\emph{loc. cit.}) covers prime ideals. The result for  a general ideal $I$ follows on choosing a prime ideal $\p\supseteq I$ with $\h(\p)=\h(I)$, and then applying the cited result together with the inequalities $\dim(A/\p)\leq \dim(A/I)$ and $\h(I)+\dim(A/I)\leq \dim(A)$.\end{proof}

\begin{proof}[Proof of Theorem \ref{T:torsion}]The implication $\text{\ref{torsion2}}\implies \text{\ref{torsion3}}$ is immediate from the hypothesis that $M$ is  spanned by $\m$-adically distinguished elements with non-zero annihilators. The implications $\text{\ref{torsion1}}\implies \text{\ref{torsion2}}$ and $\text{\ref{torsion3}}\implies \text{\ref{torsion1}}$ are covered by Lemma \ref{L:torsion} and Lemma \ref{L:torsion converse} below, respectively. \end{proof}

\begin{lemma}\label{L:torsion} Let $(R,\m)$ be a complete local-filtered ring with finite residue field and let $M$ be a permissible $R$-space.
 If $\dim(M)\leq \dim(R)-1$, then every $R$-distinguished element in $M$ has a non-zero annihilator.
\end{lemma}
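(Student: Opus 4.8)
The plan is to argue by contradiction. Suppose $x\in M$ is $R$-distinguished but $\ann(x)=(0)$; I will deduce $\dim(M)\geq \dim(R)$, contrary to hypothesis. Observe first that then $x\neq 0$ automatically, since $\ann(0)=R\neq(0)$.

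The key preliminary observation is that $Rx$ is an honest $R$-subspace of $M$, not merely an additive subgroup. Indeed, the $i=0$ instance of the $R$-distinguished condition reads $R(Rx)=\m^0(Rx)=\m^0x=Rx$, so the additive subgroup $Rx=\{rx\mid r\in R\}$ is stable under the $R$-action and contains $x=1x$; in particular it is the $R$-subspace of $M$ generated by $x$. Hence, by Corollary \ref{C:dimses}, $Rx$ is a permissible $R$-space with $\dim(Rx)\leq\dim(M)$. I would then consider the additive group homomorphism $\pi\colon R\to Rx$, $r\mapsto rx$. It is surjective, and for each $n\in\N_0$ it carries $\m^n$ onto $\m^nx$, which by the $i=n$ instance of the $R$-distinguished hypothesis equals $\m^n(Rx)$. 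This is the point where nonassociativity intervenes: $\pi$ need not be a morphism of $R$-spaces (that would require $(sr)x=s(rx)$), so one cannot compare $\dim(R)$ with $\dim(Rx)$ via Proposition \ref{P:kdim}. Instead one routes the comparison through cardinalities, which is where finiteness of $R/\m$ is used. Since $\ann(x)=\ker\pi=(0)$, the map $\pi$ is an isomorphism of abelian groups, so it induces isomorphisms of finite abelian groups $R/\m^n\cong Rx/\m^n(Rx)$; hence $\log_q|R/\m^n|=\log_q|Rx/\m^n(Rx)|$ for every $n\in\N_0$.

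To finish, I would compare growth rates using Theorem \ref{T:asymptotic}. Note $R$ itself is a permissible $R$-space (it is separated and $\gr(R)$ is cyclic over $\gr(R)$), with $\m^nR=\m^n$, so Theorem \ref{T:asymptotic} gives $\log_q|R/\m^n|\sim\alpha_R\,n^{\dim(R)}$ for some positive $\alpha_R\in\Q$. Applying the same theorem to the permissible space $Rx$ (nonzero, as $x\neq0$) gives $\log_q|Rx/\m^n(Rx)|\sim\alpha_{Rx}\,n^{\dim(Rx)}$ with $\alpha_{Rx}>0$. If $\dim(R)=0$ then the hypothesis $\dim(M)\leq\dim(R)-1$ forces $\gr(M)=0$, hence $M=(0)$ since $M$ is separated, and the lemma is vacuous; so assume $\dim(R)\geq 1$, whence $\dim(Rx)\leq\dim(M)\leq\dim(R)-1<\dim(R)$. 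But the equality $\log_q|R/\m^n|=\log_q|Rx/\m^n(Rx)|$ for all $n$ would then force $\alpha_R\,n^{\dim(R)}\sim\alpha_{Rx}\,n^{\dim(Rx)}$ with distinct exponents, which is impossible. Therefore $\ann(x)\neq(0)$.

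The main obstacle is exactly the one flagged above: because $R$ need not be associative, $r\mapsto rx$ is only an additive map and not an $R$-space morphism, so the expected clean argument ``$R/\ann(x)\cong Rx$ as $R$-spaces, now compare associated dimensions via Proposition \ref{P:kdim}'' is unavailable; one must instead pass to cardinalities, bringing in the finiteness of the residue field and the asymptotic size theorem. The secondary point requiring care is the identification $R(Rx)=Rx$ — that $Rx$ is a genuine $R$-subspace rather than just an additive subgroup — which is precisely the $i=0$ case of the $R$-distinguished hypothesis and is what makes Corollary \ref{C:dimses} applicable.
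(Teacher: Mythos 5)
Your proposal is correct and follows essentially the same route as the paper: show $Rx$ is a permissible subspace with $\dim(Rx)\leq\dim(M)<\dim(R)$ via Corollary \ref{C:dimses}, use triviality of the annihilator and the $R$-distinguished condition to get isomorphisms of finite abelian groups $R/\m^n\cong Rx/\m^n(Rx)$, and derive a contradiction from Theorem \ref{T:asymptotic}. Your added remarks (why $Rx$ is a genuine subspace, the degenerate case $\dim(R)=0$) are correct elaborations of details the paper leaves implicit.
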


\begin{proof} Let $x\in M$ be $R$-distinguished. Then $Rx$ is an $R$-subspace of $M$. By Corollary \ref{C:dimses}, the $R$-space $Rx$ is permissible and $\dim(Rx)\leq \dim(M)$. In particular, $\dim(Rx)\neq \dim(R)$. Suppose now, for a contradiction, that $x$ has no non-zero annihilator. Then the map $R\to Rx$ defined by $r\to rx$ is an ismorphism of abelian groups. Moreover, from the defining property of being $R$-distinguished, we have $R/m^n\cong Rx/\m^n(Rx)$ for all integers $n\in \N_0$. By Theorem \ref{T:asymptotic},  we must have $\dim(R)=\dim(Rx)$, giving a contradiction.
\end{proof}

\begin{lemma}\label{L:torsion converse}
Let $(R,\m)$ be a complete local-filtered ring with finite residue field and let $M$ be a permissible $R$-space. Assume that $\gr(R)$ is an integral domain and that $M$ is  spanned by $\m$-adically distinguished elements with non-zero annihilators.  Then $\dim(M)\leq \dim(R)-1$. \end{lemma}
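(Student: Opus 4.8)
The plan is to reduce to the cyclic case and then invoke the height--dimension formula. First I would write $M = Rx_1 + \dots + Rx_n$ where each $x_i$ is $\m$-adically distinguished with a nonzero annihilator, say $0 \neq r_i \in R$ with $r_i x_i = 0$. By Note \ref{N:DE}, each $Rx_i$ carries a permissible filtration whose associated gradation is the cyclic $\gr(R)$-module $\gr(R)\sigma(x_i)$; concretely, using Example \ref{E:CS}, we may filter $Rx_i$ by $F^n(Rx_i) = \m^n x_i$ so that $\gr(Rx_i) \cong \gr(R)/\ann_{\gr(R)}(\sigma(x_i))$. The surjection $Rx_1 \times \dots \times Rx_n \twoheadrightarrow M$, $(m_1,\dots,m_n) \mapsto m_1 + \dots + m_n$, is continuous with the product of these filtrations on the source, so $M$ (with any permissible filtration) has $\dim(M) \le \max_i \dim(Rx_i)$; this follows from Corollary \ref{C:dimses} applied to the image of the diagonal-type argument, or directly by comparing Hilbert--Samuel polynomials as in Proposition \ref{P:kdim}. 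Hence it suffices to show $\dim(Rx_i) \le \dim(R) - 1$ for each $i$.

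Now fix $i$ and work with the cyclic space $N := Rx_i$, filtered by $\m^n x_i$, so that $\gr(N) \cong \gr(R)/J$ where $J = \ann_{\gr(R)}(\sigma(x_i))$ is a homogeneous ideal of $\gr(R)$. By Theorem \ref{T:htdim} (the height--dimension formula, valid since $\gr(R)$ is an affine domain), $\dim(\gr(N)) = \dim(\gr(R)/J) = \dim(\gr(R)) - \h(J)$, so I need $J \neq (0)$, i.e. $\h(J) \ge 1$. The key point is to produce a nonzero homogeneous element of $\gr(R)$ killing $\sigma(x_i)$. Since $r_i \neq 0$ and $R$ is separated (being complete), $r_i$ has finite valuation and a nonzero principal part $\sigma(r_i) \in \gr(R)$. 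From $r_i x_i = 0$ and the fact that $x_i$ is $\m$-adically distinguished, I claim $\sigma(r_i)\sigma(x_i) = 0$ in $\gr(N)$: indeed, if it were nonzero, Lemma \ref{L:val} would give $v(r_i x_i) = v(r_i) + v(x_i) < \infty$, contradicting $r_i x_i = 0$. So $0 \neq \sigma(r_i) \in J$ (here I use that $\gr(R)$ is a domain to know $\sigma(r_i) \neq 0$ is not a zero-divisor issue, though really I just need $\sigma(r_i) \neq 0$), whence $J \neq (0)$ and $\h(J) \ge 1$, giving $\dim(N) = \dim(Rx_i) \le \dim(R) - 1$.

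Combining, $\dim(M) \le \max_i \dim(Rx_i) \le \dim(R) - 1$, as required. The main obstacle I anticipate is the bound $\dim(M) \le \max_i \dim(Rx_i)$: although it is the natural "subadditivity of dimension under finite spanning sets" statement, the $R$-spaces $Rx_i$ need not embed in $M$ (there may be relations among the $x_i$, and $x_i + x_j$ need not be distinguished), so one cannot simply apply Corollary \ref{C:dimses} to subspaces. The clean way around this is to give $P := Rx_1 \times \dots \times Rx_n$ the product of the permissible filtrations $\m^n x_i$, observe that $P$ is permissible with $\dim(P) = \max_i \dim(Rx_i)$ and that the span map $P \to M$ is continuous and surjective; then $M$ inherits from $P$ the quotient filtration, which is permissible with $\gr(M)$ a quotient of $\gr(P)$, so $\dim(M) \le \dim(P)$ by comparing Hilbert--Samuel polynomials exactly as in the proof of Proposition \ref{P:kdim}, and finally invoke Proposition \ref{P:kdim} itself to conclude the inequality holds for every permissible filtration on $M$. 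A secondary, purely cosmetic point is checking that the filtration $F^n(Rx_i) = \m^n x_i$ is genuinely permissible (separated and with finitely generated associated gradation over $\gr(R)$), which is immediate from Example \ref{E:CS} together with $\gr(Rx_i)$ being a cyclic, hence finitely generated, $\gr(R)$-module.
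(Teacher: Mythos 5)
Your proposal is correct and follows essentially the same route as the paper: reduce to the cyclic spaces $Rx_i$ via the surjection from their product onto $M$, and bound each $\dim(Rx_i)$ by exhibiting the nonzero homogeneous annihilator $\sigma(r_i)$ of $\gr(Rx_i)=\gr(R)\sigma(x_i)$ and applying the height--dimension formula (Theorem \ref{T:htdim}). The details you supply---why $\sigma(r_i)\sigma(x_i)=0$ via Lemma \ref{L:val}, and the Hilbert--Samuel comparison for the quotient filtration---are precisely the steps the paper leaves implicit when it cites Corollary \ref{C:dimses} and the surjectivity of $Rx_1\oplus\dots\oplus Rx_n\to M$.
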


\begin{proof} Suppose $x\in M$ is $\m$-adically distinguished and killed by a non-zero element $r$ in $R$.  Since  $\gr(Rx)=\gr(R)\sigma(x)$ (from Note \ref{N:DE}),  we see that  the principal part $\sigma(r)$ is a non-trivial annihilator of the $\gr(R)$-module $\gr(Rx)$, and hence $\text{ht}(\ann\, \gr(Rx))\geq 1$. Hence we obtain $\dim(Rx)\geq \dim(R)-1$ from the height--dimension formula. 

Using the strong Noetherian property if necessary, we can find finitely many $\m$-adically distinguished elements $x_1,\dots ,x_n$ with non-zero annihilators such that $M=Rx_1+\dots +R x_n$. It then suffices to show that  \[\dim(M)=\max\{\dim(Rx_1),\dots ,\dim(Rx_n)\}.\] 
The desired equality  follows from Corollary \ref{C:dimses} and the following observations: 
\begin{enumerate}
\item[{$\vcenter{\hbox{\tiny$\bullet$}}$}] $Rx_1,\dots ,Rx_n$ are $R$-subspace of $M$;
\item[{$\vcenter{\hbox{\tiny$\bullet$}}$}] $\dim(Rx_1\oplus \dots \oplus Rx_n)=\max\{\dim(Rx_1),\dots ,\dim(Rx_n)\}$;
\item[{$\vcenter{\hbox{\tiny$\bullet$}}$}]  the map $Rx_1\oplus \dots \oplus Rx_n\to M$ given by $(r_1x_1,\dots ,r_nx_n)\to r_1x_1+\dots +r_nx_n$ is a surjective morphism of $R$-spaces. \qedhere \end{enumerate}
\end{proof}

\section{Central extensions and torsion elements in permissible spaces}\label{S:TPS}

Motivated by the theory of pseudo-null modules over Iwasawa algebras, we shall now consider implications of the preceding discussion   for  central extensions of complete local-filtered rings.  The setting  is follows. Given a central extension $R\subseteq S$ of complete local-filtered rings (that is,  $S$ is topologically generated over $R$ by central elements of $S$) and a space $M$ over $S$, determine elements of $M$ which are killed by $R$ (under some assumptions on the $S$-space $M$, of course).

 It will be convenient at this point to record an  assumption which occurs as part of the hypotheses in a number of propositions that follow. But first, a definition.  We say that an element $a\in R$ \emph{operates centrally} on the $R$-space $M$ if 
 \[ (ar)m=a(rm)=r(am)=(ra)m\] for all $r\in R$, $m\in M$.

\begin{assumption}\label{A:CE} $(R,\m)$ is a complete local-filtered ring with finite residue field;  $M$ is an $R[[T]]$-space on which  $T$ operates centrally, and $M$ is permissible as a space over $R$. \end{assumption}

The central extensions that we consider will be spaces over formal power series rings, and they arise as follows. Let $(R,\m)$ be a complete local-filtered ring with finite residue field, and let $M$ be a permissible space over $R$.  Suppose we are given a morphism $\phi : M\to M$ of $R$-spaces such that the induced linear map $M/\m M\to M/\m M$ is nilpotent.   Now $R$ is compact (as it is profinite), and consequently $M$ is also compact. We can therefore extend the $R$-space structure on $M$ to an $R[[T]]$-space structure by making $T$ operate centrally on $M$ via $\phi$.  In subsequent subsections, we will show that the construction produces a permissible $R[[T]]$-space (Proposition \ref{P:CEP}) and that its dimension is independent of the operator ring (Theorem \ref{T:CED}), and finally obtain an identification of classes of $R$-torsion elements (Theorem  \ref{T:CET}).

\subsection{Central extensions and permissibility}\label{ss:CEP} 
In this subsection, we show that   Assumption \ref{A:CE} already implies  permissibility  over the power series ring. 
\begin{proposition}\label{P:CEP} Let $(R,\m)$ and  $M$ be as in Assumption \ref{A:CE}.  Then 
$M$ is a permissible $R[[T]]$-space.
\end{proposition}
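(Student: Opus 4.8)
The plan is to exhibit a permissible filtration on $M$ regarded as an $R[[T]]$-space, and the natural candidate is the filtration built from the $\m$-adic filtration on $R$ together with $T$ having valuation $1$. Concretely, I would equip $R[[T]]$ with the complete local-filtration coming from Example~\ref{E:filpoly} (so $F^n(R[[T]])$ consists of power series $\sum a_i T^i$ with $v(a_i)+i\geq n$), and note that $\gr(R[[T]])\cong \gr(R)[T]$ by the identification \eqref{eq:monfilpoly}, which is still a finitely generated commutative algebra over the residue field, generated in degree $1$; hence $R[[T]]$ is itself a complete local-filtered ring. The task then reduces to putting a compatible separated filtration on $M$ whose associated graded is a finitely generated module over $\gr(R)[T]$.

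First I would fix a permissible filtration $(F^*(M))$ on $M$ as an $R$-space (available by Assumption~\ref{A:CE}). Using Proposition~\ref{P:permissibletop}, I may as well replace it by the $\m$-adic filtration $F^n(M)=\m^n M$, which is permissible and with respect to which $\gr(M)$ is a finitely generated $\gr(R)$-module. Now I would define a filtration on $M$ as an $R[[T]]$-space by
\[
F^n_{T}(M):=\sum_{i\geq 0}\m^{n-i}\phi^i(M)=\m^n M+\m^{n-1}\phi(M)+\dots+\phi^n(M),
\]
where $\phi$ is the central operator through which $T$ acts (so $T^i m=\phi^i(m)$), and $\m^j:=R$ for $j\leq 0$. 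One checks readily that this is a decreasing, exhaustive filtration of $R[[T]]$-subspaces satisfying $F^i(R[[T]])F^j_T(M)\subseteq F^{i+j}_T(M)$, so $M$ becomes a filtered $R[[T]]$-space. Separatedness follows from the hypothesis that $M/\m M\to M/\m M$ is nilpotent: that forces $\phi(M)\subseteq \m M+(\text{something killed off})$ in each finite quotient, so that for each $n$ the sum defining $F^n_T(M)$ is actually finite modulo $\m^N M$ for suitable $N$, and $\bigcap_n F^n_T(M)\subseteq\bigcap_n\m^n M=(0)$ since the original filtration is separated. The key input here is really that nilpotence of $\phi$ on $M/\m M$ propagates (via the Artin--Rees lemma applied to $R$, Theorem~\ref{T:AR3}) to the statement that $\phi^k(M)\subseteq \m M$ for some fixed $k$, which controls the interaction of $\phi$ with the $\m$-adic filtration.

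Granting that, the associated graded $\gr_T(M)=\bigoplus_n F^n_T(M)/F^{n+1}_T(M)$ carries an action of $\gr(R)[T]$, with $T$ acting as (the principal part of) $\phi$, raising the $T$-degree by $1$; and it is generated over $\gr(R)[T]$ by the image of $\gr(M)$ sitting in $T$-degree $0$. Since $\gr(M)$ is finitely generated over $\gr(R)$ and $\phi^k(M)\subseteq \m M$ for some $k$ — so that in high enough $T$-degree the graded pieces are already captured by $\gr(M)$ shifted — one concludes that $\gr_T(M)$ is a finitely generated $\gr(R)[T]\cong\gr(R[[T]])$-module. Therefore the filtration $(F^*_T(M))$ is a permissible filtration on $M$ as an $R[[T]]$-space, which is exactly the claim. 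Finally I would invoke completeness of $R[[T]]$ and $M[[T]]$, together with Corollary~\ref{C:fps} or Proposition~\ref{P:fsn} if needed, to confirm all the finiteness hypotheses are in force; but the heart of the argument is the construction of $F^*_T(M)$ and the finite-generation of its graded module.

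The main obstacle I anticipate is precisely the verification that $\gr_T(M)$ is finitely generated over $\gr(R)[T]$ — equivalently, that the filtration $F^*_T(M)$ does not ``spread out'' indefinitely in the $T$-direction. This is where the nilpotence assumption on $M/\m M$ must be leveraged carefully: one needs to upgrade ``$\phi$ nilpotent mod $\m$'' to ``$\phi^k(M)\subseteq\m M$ for some fixed $k$'', which is not formal for a topological $R$-space and genuinely uses compactness of $M$ (profiniteness of $R$ plus $M$ permissible) together with the Artin--Rees lemma for $R$. Once that bound on $\phi$ is established, the finite generation of $\gr_T(M)$ — and hence permissibility over $R[[T]]$ — follows by a routine induction on $T$-degree.
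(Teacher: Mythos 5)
Your construction is, in substance, the filtration the paper itself uses: the paper realises it as the image of the permissible filtration on $M[[X]]$ under the continuous surjection $\theta\colon M[[X]]\to M$, $\sum m_jX^j\mapsto\sum T^jm_j$, after which permissibility of $M$ is immediate from Note \ref{N:induced} (the image filtration of a permissible filtration under a continuous surjection is permissible, because $\gr$ of a quotient is a quotient of $\gr$). Writing the filtration out by hand, as you do, leaves you with exactly the verifications that this device is designed to absorb, and those are where your sketch has genuine gaps.

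First, the reduction to ``the $\m$-adic filtration $F^n(M)=\m^nM$, which is permissible'' is not justified. Proposition \ref{P:permissibletop} only says that $(\m^iM)$ induces the same \emph{topology} as the given permissible filtration; in the nonassociative setting it is not known that $(\m^iM)$ is itself a permissible filtration --- one does not even know a priori that $\m^i(\m^jM)\subseteq\m^{i+j}M$, since $a(bm)$ with $a\in\m^i$, $b\in\m^j$ need not be a sum of products $cm$ with $c\in\m^{i+j}$ (this ambiguity of iterated products is precisely the concern of Note \ref{N:CLF} and Definition \ref{D:special elements}). The fix is to keep the given permissible filtration and set $F^n_T(M)=\sum_jT^jF^{n-j}(M)$, which is the paper's $\theta(F^n(M[[X]]))$. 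Second, the crux --- that $\gr_T(M)$ is a finitely generated $\gr(R)[T]$-module --- is asserted rather than proved; your ``routine induction on $T$-degree'' is where all the work lies. The clean route is to exhibit $\gr_T(M)$ as a graded quotient of $\gr(M[[X]])\cong\gr(M)[X]$, which is finitely generated over $\gr(R)[X]$ because $\gr(M)$ is finitely generated over $\gr(R)$; this is what Note \ref{N:induced} delivers once $\theta$ is shown to be continuous, and that continuity is exactly where Lemma \ref{L:central2} and Theorem \ref{T:AR3} enter. Relatedly, nilpotence of $\phi$ on $M/\m M$ is not part of Assumption \ref{A:CE}; the containment $T^kM\subseteq\m^jM$ must be extracted from the $R[[T]]$-structure itself, which the paper does via finiteness of $M/\m^jM$ and invertibility of $1-T^n$ (Lemmas \ref{L:central1} and \ref{L:central2}). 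Finally, note that each $F^n_T(M)$ must be stable under the action of \emph{infinite} power series in $T$, hence topologically closed, before it qualifies as an $R[[T]]$-subspace; this is automatic for $\theta(F^n(M[[X]]))$ but needs a separate argument for your group of finite sums.
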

 
The proof of Proposition \ref{P:CEP} requires the following two lemmas.

\begin{lemma}\label{L:central1} Let $M$ be an $R$-space. Suppose $a\in R$ is invertible, central, and operates centrally on $M$. Then the inverse of $a$ also operates centrally on $M$.
\end{lemma}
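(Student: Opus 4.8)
The statement is purely formal, so the plan is to manipulate the defining identities of ``operates centrally'' together with invertibility of $a$ and centrality of $a$ in $R$. Write $b$ for the (two-sided, since $a$ is central) inverse of $a$ in $R$, so $ab=ba=1$. I must show that $(br)m=b(rm)=r(bm)=(rb)m$ for all $r\in R$ and $m\in M$.

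\textbf{Key steps.} First I would establish $b(am)=m$ and $(ab)m=m$: since $a$ operates centrally, $a(bm)=(ab)m=1\cdot m=m$ and also $a(bm)=b(am)$ only makes sense once we know $a$ operates centrally on the element $bm$ --- so a cleaner route is to use that $a$ operates centrally directly in the form $a(xm)=x(am)=(ax)m=(xa)m$ for all $x\in R$. Taking $x=b$ gives $a(bm)=b(am)=(ab)m=(ba)m=m$. Next, for an arbitrary $r\in R$ and $m\in M$, set $m':=rm$; applying the displayed identity for $a$ with the element $m'$ and with $x=b$ yields $a\big(b(rm)\big)=b\big(a(rm)\big)=(ab)(rm)=rm$, so $b(rm)$ is the unique element mapped to $rm$ by (left) multiplication by $a$; but left multiplication by $a$ on $M$ is injective (indeed bijective) because $b$ is a left inverse for that operation by the previous step. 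Then I would check that each of $(br)m$, $r(bm)$, $(rb)m$ is also sent to $rm$ by left multiplication by $a$, using that $a$ is central in $R$ and operates centrally on $M$: e.g. $a\big((br)m\big)=(a(br))m=((ab)r)m=rm$ since $a(br)=(ab)r$ by centrality of $a$ in $R$ and associativity is not needed here because $a$ is multiplied into the product $br$ only on one side; similarly $a\big(r(bm)\big)=(ar)(bm)=(ra)(bm)=r\big(a(bm)\big)=r\big((ab)m\big)=rm$ using that $a$ operates centrally (applied to the element $bm$ and to the element $r(bm)$), and $a\big((rb)m\big)=(a(rb))m=((ar)b)m=\dots$; one pins this down to $rm$ using centrality of $a$. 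Since all four expressions have the same image under the injective map $m\mapsto am$, they are equal, which is exactly the assertion that $b$ operates centrally.

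\textbf{Main obstacle.} The delicate point is that in a nonassociative ring the products $a(br)$, $(ab)r$, $a(rb)$, $(ar)b$ need not coincide in general; I must make sure each rearrangement I use is legitimate, i.e.\ is licensed either by $a$ being \emph{central in $R$} (which should be taken to mean $a$ associates and commutes with everything in $R$, giving $a(xy)=(ax)y=x(ay)=(xy)a$ etc.) or by $a$ \emph{operating centrally on $M$} (which licenses $a(xm)=(ax)m=x(am)=(xa)m$). The bookkeeping of which identity justifies which step is the only real content; once the right invariant --- ``$a\cdot(-)$ is injective on $M$ and each candidate for $b\cdot(rm)$ has the same image'' --- is in place, the argument closes. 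I would also note at the outset that $a$ operating centrally forces $m\mapsto am$ to be invertible on $M$ with inverse $m\mapsto bm$, which is the one fact doing the work.
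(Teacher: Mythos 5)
Your proposal is correct and follows essentially the same route as the paper: both arguments verify that $a\bigl((br)m\bigr)=a\bigl(b(rm)\bigr)=a\bigl(r(bm)\bigr)=a\bigl((rb)m\bigr)=rm$ and then conclude by injectivity of the map $m\mapsto am$. Your version merely spells out the rearrangement bookkeeping that the paper leaves as ``easy to check.''
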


\begin{proof} Let $b\in R$ be the inverse of $a$: that is, we have $ab=ba=1$. Then $b$ is also in the centre of $R$. It is easy to check that if $r\in R$ and $m\in M$, then we have equalities
\[ a((br)m)=a(b(rm))=a(r(bm))=a((rb)m)=rm.\]
The lemma then follows from the observation that  map $M\to M$ given by $m\to am$ is injective.
\end{proof}

\begin{lemma} \label{L:central2}  Let $(R,\m)$ and  $M$ be as in Assumption \ref{A:CE}.  Given a positive  integer  $j$, we can find a positive integer $k$ such that $T^kM\subseteq\mathfrak{m}^jM$.
\end{lemma}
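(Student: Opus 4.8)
The plan is to exploit the fact that $M/\m M$ is a finite-dimensional vector space over the residue field $\pmb{k}=R/\m$, together with the hypothesis implicit in the whole section (from Subsection \ref{ss:CEP}) that the operator $T$ acts on $M/\m M$ nilpotently. First I would observe that, since $M$ is permissible over $R$, it is in particular a finitely spanned $R$-space; reducing modulo $\m$, the quotient $M/\m M$ is a finite-dimensional $\pmb{k}$-vector space, hence has finite length. Because $T$ operates centrally on $M$, multiplication by $T$ descends to a $\pmb{k}$-linear endomorphism $\bar T$ of $M/\m M$, and by construction (see the discussion preceding Proposition \ref{P:CEP}) this endomorphism is nilpotent: there is a positive integer $N$ with $\bar T^N = 0$ on $M/\m M$, i.e.\ $T^N M\subseteq \m M$.

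The next step is to bootstrap this single containment $T^N M\subseteq \m M$ up the $\m$-adic filtration using centrality of $T$. Since $T$ commutes with the $R$-action, $T^N(\m M)\subseteq \m(T^N M)\subseteq \m(\m M)=\m^2 M$ — here I would use Theorem \ref{T:CLF}\ref{affine3} and Note \ref{N:CLF} to know that $\m^2$ is unambiguously defined and that $\m(\m M)=\m^2 M$. Iterating, $T^{Nj} M\subseteq \m^j M$ for every $j\ge 1$. Given the prescribed $j$, simply take $k=Nj$; then $T^k M = T^{Nj}M\subseteq \m^j M$, which is exactly the assertion of the lemma.

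The only genuine subtlety is making sure the induction step $T^N(\m^i M)\subseteq \m^{i+1}M$ is legitimate in the nonassociative setting. This is where I would be careful: $T$ operating centrally means $(Tr)m = T(rm) = r(Tm)$ for all $r\in R$, $m\in M$, so for a typical generator $rm$ of $\m^i M$ with $r\in\m^i$ we get $T^N(rm) = r(T^N m)$, and $T^N m \in \m M$ gives $r(T^N m)\in \m^i(\m M)=\m^{i+1}M$ by Theorem \ref{T:CLF}\ref{affine3}. Extending additively over finite sums (recall $\m^i M$ consists of finite sums of such products, by Definition \ref{D:fin span}) completes the step. I expect this centrality-and-associativity bookkeeping to be the main — and really the only — obstacle; no convergence or completeness argument is needed here, since the statement is a purely algebraic containment for a fixed finite power.
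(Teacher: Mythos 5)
There are two genuine gaps here, and the second is fatal to the strategy as a whole.

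First, the nilpotency of $\bar{T}$ on $M/\m M$ is not available to you. Assumption \ref{A:CE} only requires that $M$ be an $R[[T]]$-space on which $T$ operates centrally and which is permissible over $R$; the morphism $\phi$ with nilpotent reduction appears only in the motivating discussion of how such spaces \emph{can} be constructed, not as a hypothesis of the lemma. Indeed, Lemma \ref{L:central2} is precisely the assertion that topological nilpotency of $T$ is automatic, and the paper proves it by a different mechanism: $M/\m^jM$ is finite (Proposition \ref{P:permissibletop}), so for each $v\in M$ a pigeonhole argument gives $T^{n+k}v\equiv T^kv\pmod{\m^jM}$, whence $(1-T^n)(T^kv)\in\m^jM$; since $1-T^n$ is central, invertible in $R[[T]]$ and operates centrally, Lemma \ref{L:central1} lets one strip it off and conclude $T^kv\in\m^jM$. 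You could salvage your first step by running this pigeonhole-plus-invertibility argument on $M/\m M$, but as written the step is assumed rather than proved.

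Second, and more seriously, the bootstrap $T^N(\m^iM)\subseteq\m^{i+1}M$ does not go through. You reduce it to $\m^i(\m M)\subseteq\m^{i+1}M$, citing Theorem \ref{T:CLF}\ref{affine3}; but that result gives $\m^i\m^j=\m^{i+j}$ as subsets of $R$ and says nothing about iterated products acting on a space. For nonassociative $R$ one has $r(sm)\neq(rs)m$ in general, so $\m^i(\m M)$ need not lie in $\m^{i+1}M$ --- this is exactly why the paper introduces $\m$-adically distinguished elements (Definition \ref{D:special elements}), and why Proposition \ref{P:CED}, which runs the very induction $T^{kN}M\subseteq\m^kM$ you propose, explicitly adds the hypothesis $\m^i(\m^jM)=\m^{i+j}M$ before doing so. The paper's proof of Lemma \ref{L:central2} avoids the issue entirely by working with $\m^jM$ for the target $j$ from the outset (using only $T(\m^jM)=\m^j(TM)\subseteq\m^jM$, which does follow from centrality), and never needs to multiply up the filtration.
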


\begin{proof}  Since $T(\m^jM)=\m^j(TM)\subseteq\m^jM$ and  $M/\m^j M$ is finite (by Proposition \ref{P:permissibletop}), it is enough to show the following: if $v\in M$, then $T^kv\in \m^jM$ for some positive integer $k$. 

So let us consider an element $v\in M$. Then we must have $T^{n+k}v\equiv T^k v\pmod{\m^j M}$ for some positive integers $k$ and $n$. Thus $(1-T^n)(T^k v)\in \m^j M$. Since $1-T^n$ is central, invertible and operates centrally, Lemma \ref{L:central1} implies that $T^kv\in \m^j M$.  \end{proof}

\begin{proof}[Proof of Proposition \ref{P:CEP}] Let $X$ be a central indeterminate, and consider the formal power series  objects $R[[X]]$ and $M[[X]]$ with their canonical filtrations (see Example \ref{E:filpoly}). It is then easy to see that $R[[X]]$ is complete  local-filtered, and that $M[[X]]$ is a permissible $R[[X]]$-space. 

We view $M$ as an $R[[X]]$-space by making $X$ operate as $T$ and consider the map $\theta: M[[X]]\to M$ given by 
\[\theta(m_0+m_1X+\dots)=m_0+Tm_1+\dots .\] 
Since the topology on $M$ is induced by the filtration $(\m^*M)$, Lemma \ref{L:central2} implies that the map $\theta$ is a well defined  homomorphism of $R[[X]]$-spaces. Also given a positive integer $k$, we can find---by Theorem \ref{T:AR3} and Lemma \ref{L:central2}---an integer $N$ such that 
$F^n(M)\subseteq \m^kM$ and $T^nM\subseteq \m^kM$ for all $n\geq N$. Thus if $i,j\in\N_0$ and $i+j\geq 2N$, then  
\[\theta(F^i(M)X^j)=T^jF^i(M)\subseteq \m^kM.\]
Now each subgroup $\m^nM$ is topologically closed (because $M/\m^nM$ is finite), and    the filtration on $M[[X]]$ is given by  $F^{n}(M[[X]])=\sum F^i(M)X^j$ where the summation runs over non-negative integers $i,j$ with $i+j\geq n$ (see Example \ref{E:filpoly}). It follows that
\[\theta(F^{2N}(M[[X]]))=\sum_{i+j\geq 2N}T^jF^i(M)\subseteq \m^kM, \]
and therefore the map  $\theta:M[[X]]\to M$ is continuous. Consequently, the  $R[[T]]$-space $M$ is permissible and  $(\,\theta(F^i(M[[X]]))\mid  i=0,1,\dots)$ is a permissible filtration on $M$ (see Note \ref{N:induced}). \end{proof}

\subsection{Central extensions and invariance of dimension}

Proposition \ref{P:CEP} raises the possibility of two different ways of calculating the dimension of a permissible space, either over the base ring or the central extension. The following result asserts that both calculations will give the same dimension.

\begin{theorem}\label{T:CED}  Let $(R,\m)$ and  $M$ be as in Assumption \ref{A:CE}.  Then the dimension of $M$ as an $R$-space is equal to the dimension of the $R[[T]]$-space $M$.\end{theorem}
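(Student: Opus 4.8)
The plan is to compare the two associated dimensions by exhibiting a single filtration that can be used to compute each. Write $\delta_R$ for the dimension of $M$ as an $R$-space and $\delta_S$ for its dimension as an $R[[T]]$-space, where $S=R[[T]]$. Both are well defined by Proposition \ref{P:kdim} (for $S$, we use Proposition \ref{P:CEP} to know $M$ is permissible as an $S$-space). First I would fix a permissible filtration $(F^*(M))$ on $M$ as an $R$-space; since $R\subseteq S$ is a central extension, every $R$-subspace occurring in the filtration is automatically an $S$-subspace once we check $T$ preserves it --- but in general $T F^i(M)\not\subseteq F^i(M)$, so instead I would pass to the filtration produced in the proof of Proposition \ref{P:CEP}, namely $G^i(M):=\theta(F^i(M[[X]]))$, which is a permissible filtration on $M$ over $S$ by Note \ref{N:induced}. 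The key point is that $\theta$ is a surjection of $S$-spaces from $M[[X]]$, and $M[[X]]$ is permissible over $R[[X]]$ with $\gr(M[[X]])\cong \gr(M)[X]$ and $\gr(R[[X]])\cong\gr(R)[X]$ (Example \ref{E:filpoly}).

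Next I would compute dimensions through Hilbert--Samuel polynomials, as in the proof of Proposition \ref{P:kdim} and Theorem \ref{T:asymptotic}. Because $S/\mathfrak{m}S$ and $M/\mathfrak{m}M$ are finite (Proposition \ref{P:permissibletop}), and because by Lemma \ref{L:central2} the powers $T^k$ eventually land in $\mathfrak{m}^jM$ for any $j$, the $S$-adic and $R$-adic topologies on $M$ agree with the topology defined by $(G^iM)$ (Proposition \ref{P:permissibletop} again, applied over $S$). Consequently $\ell_R(M/F^n(M))$ and $\ell_S(M/G^n(M))$ are both measured by the same cardinality $\log_q|M/\ast|$ up to a bounded shift in $n$, so by Theorem \ref{T:asymptotic} the two Hilbert--Samuel polynomials have the same degree. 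That degree is $\delta_R$ for the $R$-filtration and $\delta_S$ for the $S$-filtration, so $\delta_R=\delta_S$. Concretely: choose $d$ with $F^{n+d}(M)\subseteq \mathfrak{m}^nM\subseteq F^n(M)$ (Theorem \ref{T:AR3}) and $d'$ with $G^{n+d'}(M)\subseteq \mathfrak{m}_S^nM\subseteq G^n(M)$ where $\mathfrak{m}_S=\mathfrak{m}S+TS$; interleaving with Lemma \ref{L:central2} gives a three-way squeeze of $\log_q|M/\mathfrak{m}^nM|$, $\log_q|M/\mathfrak{m}_S^nM|$ and the two length functions, forcing all four to be asymptotic to the same $\alpha n^\delta$.

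The main obstacle I anticipate is pinning down the relationship between the $\mathfrak{m}$-adic filtration on $M$ over $R$ and the $\mathfrak{m}_S$-adic filtration over $S$: a priori $\mathfrak{m}_S^nM$ involves all products $T^{a}\mathfrak{m}^{b}M$ with $a+b\ge n$, and controlling these requires exactly the mixed estimate from the proof of Proposition \ref{P:CEP} (combining Theorem \ref{T:AR3} with Lemma \ref{L:central2}). The point is that for a suitable $N$, $\mathfrak{m}_S^{2N}M\subseteq \mathfrak{m}^N M$ and trivially $\mathfrak{m}^{n}M\subseteq\mathfrak{m}_S^{n}M$, which sandwiches the two topologies within a bounded change of index and hence does not affect the degree or leading coefficient of the Hilbert--Samuel polynomial. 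Once this squeeze is in hand, invariance of dimension is immediate from Theorem \ref{T:asymptotic}, since $\delta$ there is intrinsically $\dim(M)$ over whichever ring we regard $M$.
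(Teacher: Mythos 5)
Your overall strategy---sandwiching the $\m$-adic and $\m_S$-adic filtrations and comparing asymptotics via Theorem \ref{T:asymptotic}---is the same as the paper's, but there are two problems, one of which is a genuine gap. The gap is the key inclusion $\m_S^{kN}M\subseteq \m^{k}M$ (your ``$\m_S^{2N}M\subseteq\m^NM$''). To prove it you must iterate $T^{N}M\subseteq\m M$ to get $T^{kN}M\subseteq \m^kM$, and then absorb mixed terms $T^a\m^b M$; both steps force you through expressions of the form $\m^{a}(\m^{b}M)$ and $\m(\m(\cdots(\m M)))$, and in the nonassociative setting these do \emph{not} automatically equal $\m^{a+b}M$ or $\m^kM$ for a general permissible $M$ --- Theorem \ref{T:CLF}\ref{affine3} controls products inside $R$, not iterated actions on $M$. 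Citing ``Theorem \ref{T:AR3} combined with Lemma \ref{L:central2}'' does not resolve this: the identity $\m^i(\m^jM)=\m^{i+j}M$ is an additional hypothesis that must be arranged. The paper does exactly this in two steps: it first proves the statement under the extra hypothesis that $(\m^iM)$ is a permissible filtration with $\m^i(\m^jM)=\m^{i+j}M$ (Proposition \ref{P:CED}), and then reduces the general case by using Artin--Rees to pass to $F^D(M)$ (where Theorem \ref{T:CLF} gives the needed identities), saturating under $T$ to obtain a closed $R[[T]]$-subspace $N=F^D(M)+TF^D(M)+\dots+T^kF^D(M)$ of finite index in $M$, applying the special case to $N$, and noting that finite index does not change either dimension. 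Your proposal has no substitute for this reduction.

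The second, more minor, problem is your claim that the two adic filtrations differ by ``a bounded change of index,'' so that all four quantities are asymptotic to the same $\alpha n^\delta$. The true relation is a multiplicative rescaling, $\n^{kN}M\subseteq\m^kM$ versus $\m^nM\subseteq\n^nM$, which yields $a k^{d_\m}\leq b(Nk)^{d_\n}$: this forces equality of \emph{degrees} (which is all the theorem needs) but says nothing about leading coefficients, and indeed they need not agree. You should weaken that part of the argument accordingly; as stated it claims more than the squeeze delivers.
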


We will first proof Theorem \ref{T:CED} under assumptions on the  $\m$-adic filtration, and then derive the general case using the Artin--Rees lemma.
\begin{proposition}\label{P:CED}
Let $(R,\m)$ and  $M$ be as in Assumption \ref{A:CE}. We also assume that  $F^i(M)=\m^iM$ is a permissible filtration, and that $\m^i(\m^jM)=\m^{i+j}M$ for all $i,j\in \N_0$.
Then the dimension of $M$ as an $R$-space is equal to the dimension of the $R[[T]]$-space $M$.
\end{proposition}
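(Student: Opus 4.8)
The plan is to compare Hilbert--Samuel polynomials of two permissible filtrations on $M$: the $\m$-adic filtration $\m^\bullet M$ (over $R$), and a permissible $R[[T]]$-filtration. By Proposition \ref{P:kdim}, the dimension of $M$ as an $R$-space is the degree of the Hilbert--Samuel polynomial associated to $(\m^iM)$, and likewise the dimension of $M$ as an $R[[T]]$-space is the degree of the Hilbert--Samuel polynomial associated to any permissible filtration of the $R[[T]]$-space $M$. So it suffices to exhibit \emph{one} permissible $R[[T]]$-filtration whose Hilbert--Samuel polynomial has the same degree as that of $(\m^iM)$.

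First I would set up notation: write $\n$ for the maximal ideal of $S:=R[[T]]$, so $\n=\m S+TS$, and note that $\n^n M$ is the natural candidate for a permissible $S$-filtration (this is permissible since, by Proposition \ref{P:CEP}, $M$ is a permissible $S$-space, and $\n^\bullet M$ is the $\n$-adic filtration, which is permissible by the Artin--Rees machinery, cf.\ the proof of Proposition \ref{P:permissibletop}). The key computation is to sandwich $\n^n M$ between two iterates of the $\m$-adic filtration on $M$. On one side, since $\n^n\supseteq \m^n S$ we get $\n^n M\supseteq \m^n M$, hence $\ell_R(M/\n^n M)\le \ell_R(M/\m^n M)$. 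For the other direction I would use the central extension structure together with Lemma \ref{L:central2}: given $j$, there is a $k=k(j)$ with $T^k M\subseteq \m^j M$; combining with the hypothesis $\m^i(\m^j M)=\m^{i+j}M$ and $\n^n M=\sum_{a+b=n}\m^a(T^b M)$, one shows that for $n$ large (say $n\ge C\cdot j$ for a suitable linear bound $C$ coming from $k(\cdot)$ being eventually linear — or just for $n$ large enough depending on $j$) we have $\n^n M\subseteq \m^j M$. This gives, for an appropriate linear function, $\ell_R(M/\m^{\lfloor n/C\rfloor}M)\le \ell_R(M/\n^n M)$. Since lengths over $R$ and over $S$ of the finite quotients $M/\n^n M$ agree (both are the number of steps in a composition series, and $S$-subspaces are in particular $R$-subspaces — more carefully, $M/\n^n M$ is a finite abelian group so its length is $\log_q|M/\n^n M|$ regardless of the operator ring, exactly as in Theorem \ref{T:asymptotic}), the two sandwiching inequalities force the Hilbert--Samuel polynomial of $(\n^n M)$ (as an $S$-filtration) to have the same degree as that of $(\m^n M)$ (as an $R$-filtration). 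By Proposition \ref{P:kdim} applied on each side, this degree equals $\dim_R(M)$ and $\dim_{S}(M)$ respectively, so they coincide.

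I expect the main obstacle to be the upper sandwich bound $\n^n M\subseteq \m^j M$ for $n\gg 0$: one must expand $\n^n=(\m S+TS)^n$ carefully, remembering that $T$ is central but $R$ is nonassociative, so $\n^n M$ decomposes as a sum of terms $\m^{a}(T^{b}M)$ with $a+b=n$ only after invoking that $T$ operates centrally (so the $T$'s can be collected and pulled out past the $\m$-action) and that $\m^i(\m^j M)=\m^{i+j}M$ (so the $\m$-powers genuinely multiply). For a term with $b\ge k(j)$ we get $T^b M\subseteq \m^j M$ directly; for a term with $b< k(j)$ we have $a=n-b> n-k(j)\ge j$ once $n$ is large, and then $\m^a(T^b M)\subseteq \m^a M\subseteq \m^j M$. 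Assembling these — and checking that the (only finitely many) problematic small-$b$ terms are handled uniformly — is the one place where the nonassociativity and the central-operator hypothesis must be used with care; everything else is the standard Hilbert--Samuel degree comparison already used in the proof of Proposition \ref{P:kdim}.
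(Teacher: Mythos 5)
Your overall strategy is the paper's: bound $\n^nM$ between two ranges of the $\m$-adic filtration, using $\m^n\subseteq\n^n$ for one direction and Lemma \ref{L:central2} plus the centrality of $T$ for the other, then compare growth rates. But as written there are two genuine gaps. First, you apply Hilbert--Samuel theory directly to the filtration $(\n^nM)$, asserting that it is a permissible $R[[T]]$-filtration ``by the Artin--Rees machinery, cf.\ Proposition \ref{P:permissibletop}''. That proposition only identifies the \emph{topology} with the $\n$-adic one; nothing in the paper shows that the $\n$-adic filtration of a permissible space is itself permissible (indeed, the very statement you are proving takes the permissibility of $(\m^iM)$ over $R$ as a \emph{hypothesis}, precisely because it is not automatic, and over $R[[T]]$ one would also need e.g.\ $\n^i(\n^jM)\subseteq\n^{i+j}M$, which is not free in the nonassociative setting). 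The paper sidesteps this entirely: it invokes Theorem \ref{T:asymptotic}, which gives $\log_q\vert M/\n^kM\vert\sim bk^{d_\n}$ using only the existence of \emph{some} permissible $R[[T]]$-filtration (supplied by Proposition \ref{P:CEP}) together with the Artin--Rees sandwich. You should replace your ``HS polynomial of $(\n^nM)$'' step by that theorem.

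Second, your upper inclusion $\n^nM\subseteq\m^jM$ needs $n$ to be bounded \emph{linearly} in $j$; your fallback ``or just for $n$ large enough depending on $j$'' would not suffice, since a superlinear dependence $n\geq f(j)$ only yields $aj^{d_\m}\leq b\,f(j)^{d_\n}$ and gives no control on the degrees. You assert that $k(\cdot)$ from Lemma \ref{L:central2} ``is eventually linear'' but do not prove it; this is exactly the step the paper supplies: fix one $N$ with $T^NM\subseteq\m M$ and show by induction, using the centrality of $T$, Theorem \ref{T:CLF} and the hypothesis $\m^i(\m^jM)=\m^{i+j}M$, that $T^{kN}M\subseteq\m^kM$, whence $\n^{kN}M\subseteq\m^kM$ after the term-by-term analysis you sketch (which is otherwise correct). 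With that lemma in place your estimate becomes $ak^{d_\m}\leq b(Nk)^{d_\n}$ for $k\gg0$, and the degree comparison goes through.
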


\begin{proof} Let $\n$ be the maximal ideal of $R[[T]]$. We write $d_{\m}$  and $d_{\n}$ for the dimension of $M$ as an $R$-space and $R[[T]]$-space, respectively. By Theorem \ref{T:asymptotic}, we can find constants $a,b\in \Q$  such that
\begin{equation}\label{eq:kdimpnull1}
\log_q\vert M/\m^kM\vert\sim ak^{d_\m}\quad\text{and}\quad  \log_q\vert M/\n^{k}M\vert \sim bk^{d_\n}.\end{equation}
Since $\m^k\subseteq \n^k$,  we have $\vert M/\n^kM\vert \leq \vert M/\m^k M\vert$ and so $d_\n\leq d_\m$.

For inequality in the other direction, we first fix a positive integer $N$ such that $T^NM\subseteq \m M$. This is possible by Lemma \ref{L:central2}. A straightforward  inductive argument using Theorem \ref{T:CLF} then shows that 
\begin{equation}\label{eq:TkN} T^{kN}M\subseteq \m^kM\end{equation} for all positive integers $k$. 

Now let $k$ be a positive integer, and let $n$ be a non-negative integer such that $n<kN$. We can then write $n=iN+j$ where the integers $i,j$ satisfy
$0\leq i<k$ and $0\leq j<N$. Using the inclusion \eqref{eq:TkN} and Theorem \ref{T:CLF}, we then have 
\[ (T^n\m^{kN-n})M=\m^{kN-n}(T^nM)\subseteq \m^{kN-n}(T^{iN}M)\subseteq \m^{kN-n}(\m^iM)=\m^{kN-n+i}M.\]
A stimple calculation shows that  $kN-n+i\geq  k$, and so  $(T^n\m^{kN-n})M\subseteq \m^kM$ for $0\leq n<kN$.  Since  
\[\n^{kN}=\m^{kN}+\m^{kN-1}T+\dots +\m T^{kN-1}+T^{kN}R[[T]],\] we obtain $\n^{kN}M\subset \m^k M$. So $\log_q\vert M/\m^kM\vert \leq \log_q\vert M/\n^{kN}M\vert$ for all positive integers $k$. The displayed estimates \eqref{eq:kdimpnull1} then imply that 
$ak^{d_\m}\leq b(Nk)^{d_\n}$ for $k\gg 0$. Hence $d_\m\leq d_\n$.
\end{proof}

\begin{proof}[Proof of Theorem \ref{T:CED}] By Theorem \ref{T:AR3}, there is a positive integer $D$ such that $F^{n+d}(M)=F^n(R) F^d(M)$ for all integers $n\in \N_0$ and $d\geq D$. Using Theorem \ref{T:CLF}, we see that $(\m^iF^D(M)\mid i=0,1,\dots)$ is a permissible filtration on $F^D(M)$ and that
\[\m^i(\m^j F^D(M))=\m^{i+j}F^D(M)\]
for all $i,j\in \N_0$.

Since $M$ is a strongly Noetherian $R$-space, the ascending chain of $R$-subspaces
\[ F^D(M)\subseteq F^D(M)+ TF^D(M)\subseteq F^D(M)+TF^D(M)+T^2F^D(M)\subseteq \dots\]
stabilises, say from $N:= F^D(M)+\dots +T^kF^D(M)$ onwards. Then, since $N$ is a closed $R$-subspace of $M$   and $TN\subseteq N$, Lemma \ref{L:central2} implies that $N$ is an $R[[T]]$-space. Furthermore, as $T$ operates centrally, the filtration 
$(\m^iN\mid i=0,1,\dots)$ is a permissible filtration on the $R$-space $N$ and 
$\m^i(\m^jN)=\m^{i+j}N$ for all $i,j\in \N_0$. Hence, by Proposition \ref{P:CED}, the dimension of the $R$-space $N$ is equal to the dimension of the $R[[T]]$-space $N$.

Finally, since $M/N$ has finite cardinality, the dimensions of $M$ and $N$ are equal as $R$-spaces, or as $R[[T]]$-spaces. Hence the dimension of $M$ as an $R$-space is the same as the dimension of the $R[[T]]$-permissible space $M$. 
\end{proof}

\subsection{Central extensions and torsion}   

Our aim is  to relate  presence of torsion in permissible spaces admitting a central extension to a property of associated gradations. 
To this end,  let $R$ be a complete local-filtered ring, and define a \emph{pseudo-null filtration} on an $R$-space $M$ to be a permissible filtration on $M$ such that the associated gradation is a pseudo-null $\gr(R)$-module. We recall that for a commutative ring~$A$, a
finitely generated~$A$-module~$N$ is said to be \emph{pseudo-null} if it satisfies one of the following equivalent conditions.
\begin{enumerate}[label=(PN\arabic*),   ]
\item \label{PN1} If $\p\in\spec(A)$ has height at most $1$, then the localisation ~$N_\mathfrak{p}=(0)$.
\item \label{PN2} If $\p\in\spec(A)$ contains the annihilator of $N$, then  $\text{ht}(\p)\geq 2$.
\end{enumerate}
See, for instance, \cite[Definition 5.1.4 ]{NSW}.  If $A$ is an affine domain, then  the pseudo-nullity of $N$ is equivalent to the following condition:
\begin{enumerate}[label=(PN\arabic*),   ]\setcounter{enumi}{2}
\item \label{PN3} $\dim(A)-\dim(N)\geq 2$. 
 \end{enumerate} The equivalence follows from the height--dimension formula (Theroem \ref{T:htdim}) and condition \ref{PN2}.

\begin{theorem}\label{T:CET} Let $(R,\m)$ and  $M$ be as in Assumption \ref{A:CE}. Assume that $M$ is  spanned by $\m$-adically distinguished elements as an  $R$-space,  and that $\gr(R)$ is an integral domain. Then the following statements are equivalent.
\begin{enumerate}[label=(T\arabic*),   ]
\item \label{CET2} Every $R$-distinguished element of $M$ has a non-zero annihilator in $R$.
\item \label{CET3} The $R[[T]]$-space $M$ has a pseudo-null filtration.
\item \label{CET4} All permissible filtrations on the $R[[T]]$-space $M$ are pseudo-null filtrations.
\end{enumerate}
\end{theorem}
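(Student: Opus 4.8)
The plan is to prove the cycle of implications $\text{\ref{CET2}}\implies \text{\ref{CET4}}\implies \text{\ref{CET3}}\implies \text{\ref{CET2}}$, building on Theorem~\ref{T:CED} (invariance of dimension under the central extension) and Theorem~\ref{T:torsion} (the base-ring torsion dichotomy). The implication $\text{\ref{CET4}}\implies \text{\ref{CET3}}$ is trivial provided a permissible filtration on the $R[[T]]$-space $M$ exists at all, and this is exactly Proposition~\ref{P:CEP}. For $\text{\ref{CET3}}\implies \text{\ref{CET2}}$ and $\text{\ref{CET2}}\implies \text{\ref{CET4}}$ the strategy is to translate everything into a single numerical inequality about $\dim(M)$.

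First I would set $A:=\gr(R[[T]])$. Since $\gr(R)$ is an integral domain and $\gr(R[[T]])\cong \gr(R)[X]$ is a polynomial ring over it (Example~\ref{E:filpoly}, applied with $R[[T]]$ in place of $R[[X]]$; alternatively $T$ has associated graded image a polynomial variable), $A$ is again an affine domain, and $\dim(R[[T]])=\dim(R)+1$. By the equivalence $\text{\ref{PN1}}\Leftrightarrow\text{\ref{PN3}}$ recorded in the text, a permissible filtration on the $R[[T]]$-space $M$ is a pseudo-null filtration if and only if $\dim(R[[T]])-\dim(M)\geq 2$, i.e.\ (using Theorem~\ref{T:CED} and $\dim(R[[T]])=\dim(R)+1$) if and only if $\dim(M)\leq \dim(R)-1$, where $\dim(M)$ may be computed as an $R$-space. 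Crucially this condition does not depend on the choice of permissible filtration, which immediately gives the equivalence $\text{\ref{CET3}}\Leftrightarrow\text{\ref{CET4}}$ once one such filtration is known to exist. So both $\text{\ref{CET3}}$ and $\text{\ref{CET4}}$ reduce to the single statement $\dim(M)\leq \dim(R)-1$.

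It then remains to show $\text{\ref{CET2}}\Leftrightarrow\big(\dim(M)\leq\dim(R)-1\big)$, and here I would invoke Theorem~\ref{T:torsion} with the $R$-space $M$: its hypotheses are met because $(R,\m)$ has finite residue field, $\gr(R)$ is a domain, $M$ is permissible over $R$ (Assumption~\ref{A:CE}), and $M$ is spanned by $\m$-adically distinguished elements, which we may assume have non-zero annihilators either by hypothesis or by noting this will follow \emph{a posteriori}; more carefully, the implication $\text{\ref{CET2}}\implies\text{\ref{torsion1}}$ direction is Lemma~\ref{L:torsion converse} (whose hypothesis ``spanned by $\m$-adically distinguished elements with non-zero annihilators'' must be established first) and the reverse is Lemma~\ref{L:torsion}. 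Concretely: if $\text{\ref{CET2}}$ holds then in particular every $\m$-adically distinguished spanning element has a non-zero annihilator, so Lemma~\ref{L:torsion converse} gives $\dim(M)\leq\dim(R)-1$; conversely if $\dim(M)\leq\dim(R)-1$ then Lemma~\ref{L:torsion} shows every $R$-distinguished element of $M$ has a non-zero annihilator, which is $\text{\ref{CET2}}$.

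The main obstacle I anticipate is the bookkeeping around the associated graded ring of the central extension: one must check that $\gr(R[[T]])$ really is $\gr(R)[X]$ as a \emph{ring} (not merely as a graded group), that the permissible filtration on $M$ produced by Proposition~\ref{P:CEP} has the property that its associated gradation is a genuine $\gr(R[[T]])$-\emph{module} (required for ``pseudo-null filtration'' to make sense), and that $T$ acting centrally translates into the associated graded picture cleanly enough that $\gr(R[[T]])$ being a domain and the height--dimension formula (Theorem~\ref{T:htdim}) both apply. Once these compatibilities are in place, the argument is just the chain of reductions above together with Theorems~\ref{T:CED}, \ref{T:torsion} and the already-recorded equivalence $\text{\ref{PN1}}\Leftrightarrow\text{\ref{PN3}}$.
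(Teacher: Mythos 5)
Your proposal is correct and follows essentially the same route as the paper: reduce \ref{CET3} and \ref{CET4} to the inequality $\dim(M)\leq\dim(R)-1$ via Proposition \ref{P:CEP}, the equivalence \ref{PN1}$\Leftrightarrow$\ref{PN3}, Theorem \ref{T:CED} and $\dim(R[[T]])=\dim(R)+1$, then close the loop with Theorem \ref{T:torsion}. Your extra care in routing the last step through Lemmas \ref{L:torsion} and \ref{L:torsion converse} separately (since Theorem \ref{T:CET} does not assume the spanning elements have non-zero annihilators, whereas Theorem \ref{T:torsion} as stated does) is a legitimate and slightly more precise reading of what the paper's appeal to Theorem \ref{T:torsion} actually uses.
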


\begin{proof} Note that  $M$ is a permissible $R[[T]]$-space by  Proposition \ref{P:CEP}.   Since
the dimension of a permissible space is independent of the choice of permissible filtration (Proposition \ref{P:kdim}), condition  \ref{PN3} implies the equivalences
\[\text{\ref{CET3}}\iff \text{\ref{CET4}} \iff \dim(R[[T]]-\dim(M)\geq 2.\]
Now, the dimension of $M$ is independent of the operator ring (Theorem \ref{T:CED}). Also, since $\gr(R[[T]])\cong \gr(R)[T]$ (see Example \ref{E:filpoly}, equation \eqref{eq:monfilpoly}), we have $\dim(R[[T]])=\dim(R)+1$. Thus \[\dim(R[[T]]-\dim(M)\geq 2\iff
\dim(R)-\dim(M)\geq 1.\]  The result now follows from Theorem \ref{T:torsion}.
\end{proof}

\begin{remark} In the classical setting of modules over commutative rings, Theorem \ref{T:CET} revisits the equivalence between pseudo-null $R[[T]]$-modules and torsion $R$-modules.  The theory of pseudo-null modules in the general associative setting is due to Venjakob (\cite{venjakob}), and it can be shown that the existence of a pseudo-null filtration with certain regularity conditions imply  pseudo-nullity of the module. It is then tempting to speculate on a structure theory for permissible spaces over complete local-filtered rings; that story though is outside the scope of this paper.
\end{remark}

\end{document}